\newcommand{\beq}{\begin{equation}}
\newcommand{\eeq}{\end{equation}}
\newcommand{\bqa}{\begin{eqnarray}}
\newcommand{\eqa}{\end{eqnarray}}
\newcommand{\beqa}{\begin{equation*}}
\newcommand{\ben}{\begin{eqnarray*}}
\newcommand{\eeqa}{\end{equation*}}
\newcommand{\een}{\end{eqnarray*}}
\newcommand \nc {\newcommand}
\newtheorem{theorem}{Theorem}[section]
\newtheorem{lemma}[theorem]{Lemma}
\newtheorem{proposition}[theorem]{Proposition}
\newtheorem{corollary}[theorem]{Corollary}
\newtheorem{definition}[theorem]{Definition}
\newtheorem{remark}[theorem]{Remark}
\nc \thref{Theorem \ref}
\nc \leref{Lemma \ref}
\nc \prref{Proposition \ref}
\nc \coref{Corollary \ref}
\nc \deref{Definition \ref}
\nc \exref{Example \ref}
\nc \reref{Remark \ref}
\newcommand{\leftexp}[2]{{\vphantom{#2}}^{{\rm #1}}{#2}}
\newcommand{\X}{\mathcal{X}}
\newcommand{\A}{\mathcal{A}}
\newcommand{\B}{\mathcal{B}}
\newcommand{\C}{\mathbb{C}}
\newcommand{\D}{\mathcal{D}}
\newcommand{\E}{\mathcal{E}}
\newcommand{\F}{\mathcal{F}}
\renewcommand{\H}{\mathcal{H}}
\renewcommand{\L}{\mathcal{L}}
\newcommand{\M}{\mathcal{M}}
\renewcommand{\O}{\mathcal{O}}
\renewcommand{\P}{\mathbb{P}}
\newcommand{\QQ}{\mathbb{Q}}
\renewcommand{\S}{\mathcal{S}}
\renewcommand{\SS}{\mathscr{S}}
\newcommand{\T}{\mathcal{T}}
\newcommand{\Z}{\mathbb{Z}}
\newcommand{\RR}{\mathscr{R}}
\newcommand{\HH}{\mathbb{H}}
\newcommand{\s}{\mathbf{s}}
\newcommand{\q}{\mathbf{q}}
\renewcommand{\t}{\mathbf{t}}
\newcommand{\x}{\mathbf{x}}
\def\deg{\mathop{\rm deg}\nolimits}
\def\dim{\mathop{\rm dim}\nolimits}
\def\d{\partial}
\def\iso{\cong}
\def\({\left(}
\def\){\right)}
\def\[{\left[}
\def\]{\right]}
\def\<{\left\langle}
\def\>{\right\rangle}
\def\one{{\bf 1}}
\def\LD{\langle}
\def\RD{\rangle}
\def\gl{\lambda}
\def\la{\lambda}
\def\si{\sigma}
\def\Si{\Sigma}
\def\ge{\epsilon}
\def\ga{\alpha}
\def\La{\Lambda}
\numberwithin{equation}{section}
\author{
Todor Milanov \&
Yongbin Ruan \&
Yefeng Shen}
\address{Kavli IPMU (WPI) \\ The University of Tokyo \\ Kashiwa \\ Chiba 277-8583 \\ Japan}
\email{todor.milanov@ipmu.jp}
\address{Department of Mathematics \\ University of Michigan \\ Ann Arbor \\ MI 48105 \\ USA}
\email{ruan@umich.edu}
\address{Department of Mathematics \\ University of Michigan \\ Ann Arbor \\ MI 48105 \\ USA}
\email{yfschen@umich.edu}
\begin{document}
\title[Gromov-Witten theory and cycle-valued modular forms]{Gromov-Witten theory and cycle-valued modular forms}

\maketitle
\tableofcontents
\addtocontents{toc}{\protect\setcounter{tocdepth}{1}}

\section{Introduction}

A remarkable phenomenon in Gromov-Witten theory is the appearance of (quasi) modular forms. Classically, modular forms arise as a counting function of points, i.e., zero dimensional objects. A Gromov-Witten generating function can be thought as a counting function for the virtual number of holomorphic curves, i.e., one dimensional objects. Therefore, it is natural to speculate if modular forms appear here too. One can attempt to compute them explicitly. If one is lucky enough, the answers can be organized as modular forms. Indeed, this strategy has been carried out for elliptic curves \cite{OP} and the so called reduced Gromov-Witten theory of K3-surfaces \cite{MPT}. However, we should emphasize that  both steps of the strategy are highly nontrivial. In fact, the above modularity results are some of the most sophisticated works in Gromov-Witten theory. Generally speaking, it is very difficult to compute Gromov-Witten invariants. Even if you can compute, it is not clear how to organize them into modular forms. Unlike the  case of counting points, it is impractical  to try to compute a large number of coefficients and then guess the general pattern.

In the middle of the 90's, by studying the physical B-model of Gromov-Witten theory, BCOV boldly conjectured that Gromov-Witten generating function of any Calabi-Yau manifolds are in fact {\em quasi-modular forms}. A key idea in \cite{BCOV} is that the B-model Gromov-Witten function should be modular but non-holomorphic. Furthermore, its anti-holomorphic dependence is governed by the famous {\em holomorphic anomaly equations}. During the last decade, Klemm and his collaborators have put forth a series of papers to solve the holomorphic anomaly equations \cite{ABK, MKS}. One upshot is a stunning predication of Gromov-Witten invariants of quintic 3-fold up to genus 51. Indeed, this is a great achievement since mathematicians can only compute Gromov-Witten invariants for genus zero and one. Motivated by the physical intuition, there were two independent works recently in mathematics to establish the modularity of Gromov-Witten theory rigorously for  local $\P^2$ \cite{CI2} and elliptic orbifolds $\P^1$  \cite{KS, MR}. Let's briefly describe the authors' work on the elliptic orbifolds $\P^1$. The current article can be thought as a sequel.

Let $X$ be a projective manifold and $\overline{\M}_{g,n}(X, \beta)$ be the moduli space of genus-$g$, degree-$\beta$  stable maps with $n$ markings, where  $\beta$ is a nef class in $H^2(X,\Z)$, i.e., $\beta\in{\rm NE}(X)$. Let ${\rm ev}_i$ be the evaluation map at the $i$-th marked point $p_i$ and $\psi_i\in H^{*}(\overline{\M}_{g,n})$ be the first Chern class of the cotangent line bundle at $p_i$. Choose elements $\gamma_i$ in $H^*(X, \QQ)$ with $\gamma_0=1\in H^{0}(X,\QQ)$. $\pi:\overline{\M}_{g,n}(X, \beta)\to\overline{\M}_{g,n}$ be the stabilization of the forgetful morphism. The numerical GW invariants with ancestors are defined by
\beq\label{GW inv}
\langle\tau_{\iota_1}(\gamma_{1}), \cdots, \tau_{\iota_n}(\gamma_{n})\rangle^X_{g,n,\beta}=
\int_{[\overline{\M}_{g,n}(X, \beta)]^{\rm vir}} \prod_{i=1}^{n} {\rm ev}^*_i(\gamma_i)\cup\pi^*\psi^{\iota_i}_i.
\eeq
The above invariant is zero unless
$$
\sum_{i=1}^{n} ({\rm deg}_{\C}(\gamma_i)+\iota_i)=c_1(TX)\cdot\beta+(3-\dim_{\C}X)(g-1)+n.
$$
The advantage of Calabi-Yau manifolds, such as the elliptic curve $E$, is that $c_1(TX)=0$ and hence the dimension constraint is independent of $\beta$. For the elliptic curve $E$, the degree $\beta=d\cdot\D$, where $d$ is a non-negative integer and $\D$ is a nef generator of $H^2(E,\Z)$. Then, it is natural to define
\beq\label{cor:E}
\langle\tau_{\iota_1}(\gamma_1), \cdots, \tau_{\iota_n}(\gamma_{n})\rangle^E_{g,n}(q)=
\sum_{d\geq 0}
\langle\tau_{\iota_1}(\gamma_1), \cdots, \tau_{\iota_n}(\gamma_{n})\rangle^E_{g,n,d}\,q^d,
\eeq
where $q$ is the Novikov variable that we use to keep track of the degree $\beta$. In our case, the function \eqref{cor:E} can be seen as an ancestor Gromov-Witten function along $t\cdot\D\in H^2(X,\Z)$ by setting $q=e^t$ (see Section 5).
The authors proved the modularity for   the elliptic orbifolds $\P^1$ with weights of non-trivial orbifold points are $(3,3,3), (2,4,4), (2,3,6)$. These orbifolds are the quotients of some elliptic curve $E$ by $\Z/3\Z, \Z/4\Z, \Z/6\Z$ respectively.

To state the theorem, let $\X$ be one of the three elliptic orbifolds $\P^1$. Again,  $c_1(T\X)=0$ in these
cases. We can choose elements $\gamma_i$ of $H^*_{CR}(\X)$ and  define
\beq\label{cor:X}
\langle\tau_{\iota_1}(\gamma_{i_1}), \cdots, \tau_{\iota_n}(\gamma_{i_n})\rangle^{\X}_{g,n}(q)
\eeq
similarly.
The main result of \cite{KS, MR} is the following modularity theorem.
\begin{theorem}\label{t1}
\cite{MR} Suppose that $\X$ is one of the three elliptic orbifolds $\P^1$ from above. For any multi-indices $\iota_j, i_j$, the GW invariant  \eqref{cor:X} converges to
a quasi-modular form of an appropriate weight for a finite index subgroup $\Gamma$ of $SL_2(\Z)$ under the change of variables $q=e^{2\pi i \tau/3}$, $e^{2\pi i \tau/4},$
$e^{2\pi i \tau/6},$ respectively (see \cite{MR} for the subgroup $\Gamma$ and the weights of the quasi-modular forms).
\end{theorem}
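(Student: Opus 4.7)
The plan is to pass to the mirror B-model, namely the Saito--Givental theory of the simple elliptic singularities $\widetilde{E}_6,\widetilde{E}_7,\widetilde{E}_8$ attached to the three orbifolds, and to establish modularity there. The reason is that the B-model Frobenius structure lives globally over a moduli space fibred over the modular curve of a family of elliptic curves, so the periods of a holomorphic primitive form give direct control over modular transformation properties, whereas no such structure is manifest on the A-side.

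\emph{Step one} would be a genus-zero mirror theorem identifying the A-model Frobenius manifold of $\X$ with the Saito Frobenius structure of the corresponding singularity. This supplies an explicit mirror map sending the \Kahler{} flat coordinate to $\tau$ (up to the scalings $1/3, 1/4, 1/6$ appearing in the statement) and the Novikov variable $q$ to $e^{2\pi i\tau/r}$. Under this map the structure constants and the flat metric become expressible through periods of the primitive form on the elliptic fibre, which are classical (quasi-)modular forms for a congruence subgroup $\Gamma\subset SL_2(\Z)$ determined by the level structure.

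\emph{Step two} would reconstruct the higher-genus ancestor potentials using Givental's formula, legitimized in the CohFT setting by Teleman's classification of semisimple cohomological field theories. Generic points of the Frobenius manifold are semisimple, so the genus-$g$, $n$-point ancestor potential is determined by the genus-zero data together with Givental's $R$-matrix. This $R$-matrix solves a holomorphic ODE whose coefficients are built from the same periods, and a direct inspection of the recursion shows that its entries are polynomials in the Eisenstein series $E_2,E_4,E_6$ whose coefficients are modular forms for $\Gamma$. Substituting into Givental's graph sum then expresses each ancestor correlator \eqref{cor:X} as a polynomial in quasi-modular forms, the weight being tracked by the homogeneity of the $R$-matrix expansion together with the weights of the genus-zero periods.

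The principal obstacle I expect is the passage from formal power series in $q$ to genuine analytic objects. Teleman's reconstruction a priori produces only a formal ancestor potential, and one must show that the series defining \eqref{cor:X} converges in a neighbourhood of the large-volume cusp and admits holomorphic continuation to the upper half-plane. This requires the $R$-matrix and the mirror map to extend globally over the base of the elliptic family, and hinges on the Frobenius manifold being genuinely global away from its discriminant. A secondary subtlety is fixing Saito's primitive form in a way that preserves the correct normalization of the flat coordinate $\tau$: an incorrect choice alters the mirror map by a non-modular function and destroys the quasi-modularity. Once the primitive form is pinned down, the weight and the specific subgroup $\Gamma$ in each of the three cases follow from the standard analysis of the period map on the corresponding modular curve.
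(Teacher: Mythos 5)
Your proposal reproduces the overall architecture of the paper's argument (and of \cite{MR}, which this theorem is quoted from): pass to the global Saito--Givental B-model of $\widetilde{E}_6,\widetilde{E}_7,\widetilde{E}_8$, match via a genus-zero mirror theorem with mirror map \eqref{mirror-map}, reconstruct higher genus by Givental--Teleman, and treat convergence separately. But there are two genuine gaps. First, in your Step two you assert that ``a direct inspection of the recursion shows'' that the $R$-matrix entries are polynomials in $E_2,E_4,E_6$ with modular coefficients. This is exactly the point that needs proof, and it is not how the paper proceeds. The recursion determines $R(\t)$ from the canonical coordinates, and nothing in it makes quasi-modularity visible; in particular, holomorphy at the cusp (part of being quasi-modular) cannot be read off from the recursion at all. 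The paper instead obtains the transformation law structurally, from the monodromy group acting by analytic continuation on the global moduli $\M=\HH\times\C^{\mu-1}$: the continuation transforms the CohFT by $\La^{W}(\nu(t))=J_{\nu}^{-1}(t)\circ\widehat{X}_{\nu,t}(z)\circ\La^{W}(t)$ (Theorem \ref{thm:analytic-cont}), the failure of strict modularity is isolated in the single unipotent factor $X_{\nu,t}(z)$, and this factor is absorbed by the anti-holomorphic completion $\widehat{X}_{t,\bar t}(z)$ of \eqref{eq:anti-holo}, yielding the weight-$m(I)$ transformation law (Theorem \ref{qm:transf}). Any Eisenstein-series description of $R$ would have to be \emph{deduced} from such a monodromy computation, not invoked in its place.

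Second, you correctly flag convergence as the principal obstacle, but your proposed remedy --- that ``the $R$-matrix and the mirror map extend globally over the base of the elliptic family'' --- is circular and in fact false as stated: the $R$-matrix has poles along the caustic $\mathcal{K}$ (Lemma \ref{finite-order-poles}), and nothing on the B-side controls behavior at the cusp $\tau=\sqrt{-1}\,\infty$, which lies outside the domain where the Frobenius structure is semisimple. The paper closes this in four steps that your proposal lacks: convergence and generic semisimplicity of the genus-zero A-model from the estimates of \cite{KS} (Theorem \ref{formal-ss}); Teleman reconstruction applied over the field $\overline{{\rm Frac}\,\C[[e^t,t_0,\dots,t_{\mu-2}]]}$ to identify ${}_{\t}\La^{\X}$ with $\La^{W}(\t^B)$ formally; the Coates--Iritani lemma to upgrade this formal identity to an analytic one (Theorem \ref{convergence}), so that holomorphy at the cusp is imported from the A-side, where $\La^{\X}_{g,n}$ is manifestly a power series in $q$; and extension through the caustic by a $\C^*$-homogeneity plus Hartogs argument (Proposition \ref{extension}), not by any global property of $R$. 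Without these ingredients --- especially the Coates--Iritani step and the caustic extension --- your argument does not close, since B-model quasi-modularity alone is established only on $\M_{ss}$ and says nothing at the cusp.
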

The same theorem for elliptic curves were proved ten years ago by Okounkov-Pandharipande \cite{OP}.

Recall that one can construct {\em Gromov-Witten cycles (cohomological field theories)} by a partial
integration, i.e., pushforward via the forgetfull morphism
\beq\label{GW cycle}
\Lambda_{g,n,\beta}^{X}(\gamma_{1}, \cdots, \gamma_{n})=\pi_*\big(\prod_{i=1}^{n}{\rm ev}^*_i(\gamma_i)\big)\in H^*(\overline{\M}_{g,n}, \QQ).
\eeq
The degree of the cycle is computed from the {\em dimension axiom},
\beqa
\deg_{\C}\Lambda^{X}_{g,n,\beta}(\gamma_{1}, \cdots, \gamma_{n})=(g-1)\dim_{C}(X)+\sum_{i=1}^{n}\deg_{\C}(\gamma_i)-c_1(TX)\cdot\beta.
\eeqa
The numerical Gromov-Witten invariants are obtained by
$$
\langle\tau_{\iota_1}(\gamma_1), \cdots, \tau_{\iota_c}(\gamma_n)\rangle^X_{g,n,\beta}=
\int_{\overline{\M}_{g,n}} \Lambda_{g,n,\beta}^{X}(\gamma_{1}, \cdots, \gamma_{n})\cup\prod_{i=1}^{n}\psi^{\iota_i}_{i}.
$$
Motivated by the corresponding work in number theory \cite{Z}, we want to consider the generating function of Gromov-Witten cycles
\beq\label{GW cycle func}
\Lambda_{g,n}^{X}(\gamma_{1}, \cdots, \gamma_{n})(q)=\sum_{\beta\in{\rm NE}(X)}\Lambda^{X}_{g,n,\beta}(\gamma_{1}, \cdots, \gamma_{n})\,q^{\beta}.
\eeq
We view the RHS of \eqref{GW cycle func} as a function on $q$ taking value in $H^*(\overline{\M}_{g,n}, \QQ)$. To emphasise this perspective, we sometimes refer to it as {\em cycle-valued} generating function. The main theorem of this paper is
\begin{theorem}\label{t1}
Suppose that $\X$ is one of the three elliptic orbifolds $\P^1$ with three non-trivial orbifold points;
then $\Lambda_{g,n}^{\X}(\gamma_{1}, \cdots, \gamma_{n})(q)$ converges to a  cycle-valued quasi-modular
form of an appropriate weight for a finite index subgroup $\Gamma$ of $SL_2(\Z)$ under the change of
variables $q=e^{2\pi i \tau/3}$, $e^{2\pi i \tau/4},$ $e^{2\pi i \tau/6},$ respectively.
\end{theorem}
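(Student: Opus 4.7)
The plan is to lift the numerical modularity of Theorem 1.1 to the cycle level via Teleman's classification of semisimple cohomological field theories. Teleman's theorem asserts that a generically semisimple CohFT is uniquely reconstructed from its genus-zero Frobenius structure and a Givental $R$-matrix $R(z;t)\in\L^{(2)}_{+}\lieGL(H)$ by a universal sum over stable graphs. In the formula each vertex, edge and leaf contributes an elementary expression in the canonical coordinates $u_i(t)$, the diagonal normalisations $\Delta_i(t)$, and the matrix entries of $R(z;t)$; the combinatorial weights lie in the tautological ring of $\Mbar_{g,n}$, and all $t$-dependence enters only through these Frobenius-manifold data.

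I would first apply this formula to the CohFT of the elliptic orbifold $\X$ restricted to the primitive K\"ahler line $t=\tau\,\D$, obtaining a representation
\begin{equation*}
\Lambda^{\X}_{g,n}(\gamma_1,\dots,\gamma_n)(q)=\sum_{\mathrm{G}}P_{\mathrm{G}}\bigl(R(\psi;q),u(q),\Delta(q);\gamma_\bullet\bigr),
\end{equation*}
where $\mathrm{G}$ runs over decorated stable graphs of type $(g,n)$ and each $P_{\mathrm{G}}$ is a polynomial with coefficients in the tautological ring of $\Mbar_{g,n}$. The central task becomes showing that $u_i(q)$, $\Delta_i(q)$, and each $z$-coefficient of each entry of $R(z;q)$ converges to a quasi-modular form for the same finite-index subgroup $\Gamma\subset SL_2(\Z)$ that appears in Theorem 1.1. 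Granted this, the theorem is immediate: the cycle-valued graph sum is a polynomial in quasi-modular forms with tautological coefficients, hence itself a quasi-modular form with values in $H^*(\Mbar_{g,n},\QQ)$, of weight prescribed by the dimension axiom.

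For the modularity of the $R$-matrix I would invoke mirror symmetry. The global Frobenius manifold of $\X$ is isomorphic to that of the corresponding simple elliptic singularity $P_8$, $X_9$, or $J_{10}$, whose canonical coordinates, normalisations, and fundamental solutions of the quantum differential equation are computable from period integrals of vanishing cycles on an elliptic curve varying with modular parameter $\tau$. Classical modularity of such periods, together with the Picard-Fuchs system, realises the entries of $R(z;\tau)$ as power series in $z$ whose coefficients lie in the ring of quasi-modular forms for $\Gamma$. As an independent cross-check, Theorem 1.1 already gives numerical modularity of enough ancestor correlators, from which Givental's formula extracts $R$ and forces its entries to be quasi-modular.

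The main obstacle is that the distinguished point $q=0$ lies on the caustic of the Frobenius manifold, where semisimplicity, and hence the applicability of Teleman's reconstruction, break down. I would handle this by first establishing the cycle-valued identity and its quasi-modularity at a generic semisimple point on the K\"ahler ray, and then extending across $q=0$ by continuity of the CohFT: each $q^d$ coefficient of $\Lambda^{\X}_{g,n}(q)$ is a single tautological class associated with the curve class $d\,\D$, so the formal $q$-series produced by semisimple reconstruction must agree with the honest GW cycle-valued generating function on the punctured ray, and hence at $q=0$ by the uniqueness of quasi-modular extensions across a single cusp. Combining this extension with the graph sum and the ring-theoretic closure of quasi-modular forms completes the proof.
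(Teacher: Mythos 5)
Your proposal shares the paper's skeleton (Teleman reconstruction plus mirror symmetry to the simple elliptic singularities), but it fails at its crucial step. You plan to ``first establish the cycle-valued identity and its quasi-modularity at a generic semisimple point on the K\"ahler ray'' and then cross $q=0$ by continuity. There are no such points: the paper notes explicitly that the locus $\t=(t,0)\in\HH\times\C^{\mu-1}$ is \emph{never} semisimple, so the entire K\"ahler ray lies in the caustic, not just $q=0$. One must instead work in the full miniversal deformation (equivalently, big quantum cohomology with all $\mu$ coordinates turned on), where semisimplicity is only \emph{generic}; this forces the actual architecture of the paper: convergence of the big quantum product via the Krawitz--Shen estimates and generic semisimplicity via genus-zero mirror symmetry (Theorem \ref{formal-ss}), Teleman's uniqueness applied over the field $\overline{{\rm Frac}\,\C[[e^t,t_0,\dots,t_{\mu-2}]]}$ to identify the A- and B-model CohFTs formally, the Coates--Iritani algebraicity lemma to convert that formal identity into analytic convergence near the cusp (Theorem \ref{convergence}), and finally the Hartogs-plus-$\C^*$-homogeneity argument of Proposition \ref{extension}, resting on the finite-order-pole bound of Lemma \ref{finite-order-poles}, to extend through the caustic onto the ray. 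Your substitutes --- ``continuity of the CohFT'' and ``uniqueness of quasi-modular extensions across a single cusp'' --- presuppose exactly the convergence that is at stake: a priori $\Lambda^{\X}_{g,n}(q)$ is only a formal $q$-series, so it has no values on a punctured ray to compare with.

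Your modularity mechanism is also not substantiated. The canonical coordinates $u_i$, the normalizations $\Delta_i$, and the entries of $R(z;\t)$ are multivalued on $\M_{ss}$ --- the monodromy permutes the canonical frame --- and along the ray they exist only as limits, so the claim that each $z$-coefficient of each $R$-entry is a quasi-modular form is neither proved by ``classical modularity of periods'' nor what the paper does. The paper instead computes the monodromy action on the B-model CohFT \emph{as a whole} (Theorem \ref{thm:analytic-cont}): analytic continuation along $\nu$ acts by $J_\nu^{-1}(t)\circ\widehat{X}_{\nu,t}(z)$, and the unipotent factor $X_{\nu,t}(z)$ is precisely the quasi-modular anomaly, which is absorbed by the anti-holomorphic completion $\widehat{X}_{t,\bar t}(z)$ to yield the clean weight-$m(I)$ transformation law (Theorem \ref{qm:transf}); holomorphy at the cusp then comes from the A-model side, not from modular formalism. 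Finally, your fallback --- extracting $R$ from the numerical modularity theorem of \cite{MR} and ``forcing'' its entries to be quasi-modular --- cannot close the argument: numerical modularity does not formally imply cycle-valued modularity, as the paper itself emphasizes by noting that the cycle-valued statement remains open for the elliptic curve even though Okounkov--Pandharipande long ago proved the numerical one.
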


We should mention that the above cycle-valued modularity theorem is not yet known for elliptic curve.

We obtain the modularity of numerical Gromov-Witten invariants by integrating the $\Lambda_{g,n}^{\X}(\gamma_{1}, \cdots, \gamma_{n})$ with
psi-classes over the fundamental cycle $[\overline{\M}_{g,n}]$. On the other hand, we can also use other interesting classes of $\overline{\M}_{g,n}$ such as $\kappa_i$'s or Hodge class $\lambda_i$'s.

Suppose that $P$ is a polynomial of $\psi_i, \kappa_i, \lambda_i$. We define a {\em generalized numerical Gromov-Witten invariants}
$$\langle\gamma_{1}, \cdots, \gamma_{n}; P\rangle^X_{g,n,\beta}=\int_{\overline{\M}_{g,n}} P\cup\Lambda_{g,n,\beta}^{X}(\gamma_{1}, \cdots, \gamma_{n})
$$
and its generating function
$$\langle\gamma_{1}, \cdots, \gamma_{n}; P\rangle^X_{g,n}(q)=\sum_{\beta\in{\rm NE}(X)}\langle\gamma_{1}, \cdots, \gamma_{n}; P\rangle^X_{g,n,\beta}\,q^{\beta}.$$
Here, we set it to be zero if the dimension constraint are not satisfied.
\begin{corollary}
Suppose that $\X$ is one of the above three elliptic orbifolds $\P^1$. Then, the above generalized numerical Gromov-Witten generating functions are quasi-modular forms for the same modular group and weights given by the main theorem.
\end{corollary}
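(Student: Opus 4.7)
The plan is to deduce the corollary directly from Theorem 1.1 (the cycle-valued modularity theorem) via the observation that integration against a fixed tautological class $P$ is a $q$-independent linear functional on $H^*(\overline{\M}_{g,n},\QQ)$, and such functionals preserve (quasi-)modularity.

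First I would note that the classes $\psi_i$, $\kappa_i$, and $\lambda_i$ on $\overline{\M}_{g,n}$ do not depend on the degree $\beta$ or on the Novikov variable $q$. Hence for any polynomial $P$ in these classes, the assignment
$$
I_P: H^*(\overline{\M}_{g,n},\QQ) \longrightarrow \QQ, \qquad \alpha \longmapsto \int_{\overline{\M}_{g,n}} P \cup \alpha,
$$
is a $q$-independent $\QQ$-linear functional. Applying $I_P$ termwise to the Fourier expansion
$$
\Lambda_{g,n}^{\X}(\gamma_1,\ldots,\gamma_n)(q) \;=\; \sum_{\beta \in \mathrm{NE}(\X)} \Lambda_{g,n,\beta}^{\X}(\gamma_1,\ldots,\gamma_n)\, q^{\beta}
$$
yields precisely the generating function $\langle \gamma_1,\ldots,\gamma_n; P \rangle^{\X}_{g,n}(q)$.

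Next I would appeal to the linearity of quasi-modularity: a formal $q$-series with coefficients in a finite-dimensional $\QQ$-vector space $V$ is cycle-valued quasi-modular of weight $k$ for $\Gamma$ if and only if, for every $\QQ$-linear functional $\ell:V\to\QQ$, the scalar series $\ell(\Lambda(q))$ is a classical quasi-modular form of weight $k$ for $\Gamma$. This equivalence is immediate by fixing a basis of $V$: convergence, the modular transformation law, and the almost-holomorphic completion all hold componentwise. Taking $V = H^*(\overline{\M}_{g,n},\QQ)$ (finite-dimensional) and $\ell = I_P$, Theorem 1.1 directly implies that $\langle \gamma_1,\ldots,\gamma_n; P\rangle^{\X}_{g,n}(q)$ is a classical quasi-modular form of the same weight for the same subgroup $\Gamma$.

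Finally, the dimension axiom together with $c_1(T\X)=0$ guarantees that the classes $\Lambda_{g,n,\beta}^{\X}(\gamma_1,\ldots,\gamma_n)$ lie in a single cohomological degree independent of $\beta$, so the dimension constraint on $P$ is also $\beta$-independent: either the pairing vanishes for all $\beta$ (in which case the corollary holds trivially under the convention stated in the excerpt), or $P$ has complementary degree and every Fourier coefficient contributes. In short, there is no genuine obstacle here: all of the analytic and modularity content has already been packaged into Theorem 1.1, and the corollary is merely the scalar image of that theorem under the fixed $q$-independent linear functional $I_P$ on $H^*(\overline{\M}_{g,n},\QQ)$.
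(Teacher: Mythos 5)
Your proof is correct and coincides with the paper's own (implicit) reasoning: the paper states this corollary without proof precisely because, as you observe, pairing with a fixed polynomial $P$ in $\psi_i,\kappa_i,\lambda_i$ is a $q$-independent linear functional on the finite-dimensional space $H^*(\overline{\M}_{g,n},\QQ)$, under which the cycle-valued quasi-modularity of Theorem \ref{t1} passes componentwise to the scalar generating function with the same group $\Gamma$ and weight. Your additional remark handling the dimension constraint via $c_1(T\X)=0$ is consistent with the paper's convention of setting such invariants to zero.
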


Recall that the proof of the numerical version consists of two steps. The first step is to construct a higher genus B-model theory (modulo an extension problem) and prove its modularity. Then, the second step is to prove mirror theorems to match it with a Gromov-Witten theory which will solve the extension property as well as inducing the modularity for a Gromov-Witten theory. In this paper, we follow the same outline, i.e., our strategy can be carried out on the cycle level. Our main new ingredient is Teleman's reconstruction theorem \cite{Te}.

The paper is organized as follows. In Section \ref{sec:2}, we will review the action of upper-triangular symplectic operators on a cohomological field theory, which will be the main tool of the paper. In Section \ref{sec:3}, we review the construction of global Frobenius manifold structures from \cite{MR}. Using it, we can define Givental B-model cohomological field theory as indicated by Telemann \cite{Te}. In Section \ref{sec:4}, we calculate the action of the monodromy group on the Givental's B-model cohomological field theory and prove the (quasi-)modularity. Finally, in  Section \ref{sec:5}, we prove the mirror theorems on the cycle level. Here, the original $g$-reduction argument does not apply. We replace it by Teleman's reconstruction theorem \cite{Te}.

\subsection{Acknowledgements}

The work of the first author is supported by Grant-In-Aid and by the World Premier International Research Center Initiative (WPI Initiative), MEXT, Japan. The second author is partially supported by a NSF grant. The
second and third authors would like to thank Hiroshi Iritani for interesting discussions on the convergence
of Gromov-Witten theory. The third author would like to thank Emily Clader, Nathan Priddis and Mark Shoemaker for helpful discussions on Givental's theory. Finally, three of us would like to thank IPMU for hospitality where the part of this work is carried out. We thank Arthur Greenspoon for editorial assistance.

\section{Cohomological field theory and quantization}\label{sec:2}

The quantization formalism in Gromov-Witten theory was introduced by Givental in \cite{G} and then revisited by Teleman at the cohomological field theory level in \cite{Te}. The latter will be used in this article. For the readers' convenience, we give a brief introduction here. From now on, let
$\pi_{g,n,k}: \overline{\M}_{g, n+k}\rightarrow \overline{\M}_{g,n}$
be the stabilization of the morphism that forgets the last $k$ marked points. For simplicity, we will omit the subscripts if they are indicated in the context.
\subsection{Cohomological field theories}\label{sec:2.1}
Let $H$ be a vector space of dimension $N$ with a unit {\bf 1} and a non-degenerate paring $\eta$. Without loss of generality, we always fix a basis of $H$, say $\mathscr{S}:=\{\d_i, i=0,\cdots,N-1\}$, and we set $\d_0={\bf 1}$. Let $\{\d^j\}$ be the dual basis in the dual space $H^{\vee}$,
(i.e., $\eta(\d_i,\d^j)=\delta_i^j$).
A \emph{Cohomological field theory} (or CohFT) is a set of multi-linear maps $\La=\{\La_{g,n}\}$,
with $$\Lambda_{g,n}: H^{\otimes n}\longrightarrow H^*(\overline{\M}_{g,n}, \C),$$
or equivalently,
$$\Lambda_{g,n}\in H^*(\overline{\M}_{g,n}, \C)\otimes (H^{\vee})^{\otimes n},$$
defined for each stable genus $g$ curve with $n$ marked points,i.e., $2g-2+n>0$.
Furthermore, $\La$ satisfies a set of axioms (CohFT axioms) described below:
\begin{enumerate}
\item ($S_n$-invariance) For any $\sigma\in S_n$, and $\gamma_1,\cdots,\gamma_n\in H$; then
$$\Lambda_{g,n}(\gamma_{\sigma(1)}, \cdots, \gamma_{\sigma(n)})=\La_{g,n}(\gamma_1,\cdots,\gamma_n).$$
\item (Gluing tree)
Let
$$\rho_{tree}:\overline{\M}_{g_1,n_1+1}\times\overline{\M}_{g_2,n_2+1}\to\overline{\M}_{g,n}$$
where $g=g_1+g_2, n=n_1+n_2$, be the morphism induced from gluing the last marked point of the first curve and the first marked point of the second curve; then
\begin{align*}
\rho_{tree}^*&\big(\Lambda_{g,n}(\gamma_1,\cdots,\gamma_n)\big)\\
&=\sum_{\alpha,\beta\in\SS}\Lambda_{g_1,n_1+1}(\gamma_1,\cdots,\gamma_{n_1},\alpha)\eta^{\alpha,\beta}\Lambda_{g_2,n_2+1}(\beta,\gamma_{n_1+1},\cdots,\gamma_n).
\end{align*}
Here $\big(\eta^{\alpha,\beta}\big)_{N\times N}$ is the inverse matrix of $\big(\eta(\alpha,\beta)\big)_{N\times N}$.
\item (Gluing loop)
Let
$$\rho_{loop}:\overline{\M}_{g-1,n+2}\to\overline{\M}_{g,n},$$
be the morphism induced from gluing the last two marked points; then
\beqa
\rho_{loop}^*\big(\Lambda_{g,n}(\gamma_1,\cdots,\gamma_n)\big)=\sum_{\alpha,\beta\in\SS}\Lambda_{g-1,n+2}(\gamma_1,\cdots,\gamma_n,\alpha,\beta)\eta^{\alpha,\beta}.
\eeqa
\item (Pairing)
$$\int_{\overline{\M}_{0,3}}\Lambda_{0,3}({\bf 1}, \gamma_1, \gamma_2)=\eta(\gamma_1, \gamma_2).$$
\end{enumerate}
If in addition the following axiom holds
\begin{itemize}
\item[(v)] (Flat identity) Let
  $\pi:\overline{\M}_{g,n+1}\to\overline{\M}_{g,n}$ be the forgetful morphism; then
$$\Lambda_{g, n+1}(\gamma_1,\cdots,\gamma_n, {\bf 1})=\pi^* \Lambda_{g,n}(\gamma_1,\cdots,\gamma_n).$$
\end{itemize}
then we say that $\La$ is a CohFT with a {\em flat identity}.

Note that $\La_{0,3}$ will induce a Frobenius multiplication $\bullet$ on $(H,\eta)$, defined by
\beq\label{frobenius}
\eta(\alpha\bullet\beta,\gamma)=\int_{\overline{\M}_{0,3}}\Lambda_{0,3}(\alpha,\beta,\gamma);
\eeq
We refer to $(H,\eta,\bullet)$ as the Frobenius algebra underlying $\La$, or simply as the
{\em state space} of $\La$.  The CohFT is called {\em semisimple} if the underlying Frobenius algebra is semisimple.

\subsection{Examples of CohFTs}
Let $\C^N$ be the complex vector space equipped with the standard bi-linear pairing: $(e_i,e_j)=\delta_{i,j}$.
Let $\Delta=(\Delta_1,\cdots,\Delta_N)$ be a sequence of {\em non-zero} complex numbers. The following definition
\beq\label{trivial-cohft}
I^{N,\Delta}_{g,n}(e_{i_1},\dots,e_{i_n}):=
\begin{cases}
\Delta_{i}^{g-1+\frac{n}{2}} & \mbox{if  } i=i_1=i_2=\cdots=i_n,\\
0 & \mbox{otherwise},
\end{cases}
\eeq
induces a CohFT on $\C^N$ which we call a rank $N$ {\em trivial} CohFT. The Frobenius algebra underlying
$I^{N,\Delta}$ will be denoted by $(\C^N,\Delta)$. Note that the Frobenius multiplication is
given by
$$
e_i\bullet e_j=\delta_{ij} \, \sqrt{\Delta_i} \, e_i.
$$

Another famous example comes from Gromov-Witten theory (cf. \cite{KM, CR}). Let $X$ be a projective variety (or orbifold), let $H$ be its cohomology $H^*(X)$ (or Chen-Ruan cohomology $H^*_{\rm CR}(X)$), $\eta$ be the Poincar\'e pairing. Then $\La_{g,n}^X(q)$ defined in \eqref{GW cycle func} gives a CohFT for $q=0$.
The above axioms make sense for cohomology classes $\Lambda^{X}_{g,n}(q)$ that have coefficients in some ring of formal power series. In such a case we say that we have a {\em formal} cohomological field theory. A priori, the CohFT in \eqref{GW cycle func} is only formal.

\subsection{Givental's formalism}\label{sec:2.2}


Following Givental, we introduce the vector space $\H=H((z))$ of
formal Laurent series in $z^{-1}$.
Furthermore, $\H$ is  equipped with the following symplectic structure $\Omega$:
\ben
\Omega(f(z),g(z))= {\rm res}_{z=0} (f(-z),g(z))dz, \quad f(z),g(z)\in\H,
\een
where for brevity we put $(a,b)=\eta(a,b)$ for $a,b\in H$.  Note that $\H$ has a polarization
$$\H=\H_{+}\oplus\H_{-}$$
with $\H_{+}=H[z]$ and $\H_{-}=z^{-1}H[[z^{-1}]]$, which allows us to identify  $\H\cong T^*\H_+$.
We fix a Darboux coordinate system $q_k^i,p_{l,j}$ for $\H$ via
$$f(z)=\sum_{k=0}^{\infty}\sum_{i=0}^{N-1} q^{i}_k\, \d_i
z^{k}+\sum_{l=0}^{\infty}\sum_{j=0}^{N-1} p_{l,j}\,\d^j(-z)^{-l-1}\in\H,$$
For convenience, we put
\beq\label{q-variable}
\q_k:=(q_k^{1},\cdots,q_k^{N}) \hspace{10 mm}\text{and}\hspace{10 mm} \q:=(\q_0,\q_1,\cdots).
\eeq
In this paper, we focus on the subgroup $\L^{(2)}{\rm GL}(H)$ of the
loop group $\mathcal{L}{\rm GL}(H)$  consisting of symplectomorphisms
$T:\H\to\H$. Note that such symplectomorphisms are defined by the
following equation:
$$^*T(-z)T(z)=\rm{Id},$$
where $^*T$ is the adjoint operator with respect to the bi-linear pairing $\eta$, i.e.,
$$(^*T f, g)=(f, T g).$$
We will allow symplectomorphism $E$ of the following form:
\beqa
E:={\rm Id}+E_1 z+E_2 z^2+\cdots \in{\rm End}(H)[[z]].
\eeqa
They form a group which we denote by $\L^{(2)}_+{\rm GL}(H)$ and we
refer to its elements as \emph{upper-triangular} transformations.

Next, we want to define the quantization $\widehat{E}$. Note that $A=\log E$ is a well-defined
{\em infinitesimal symplectomorphism}, i.e., $^*A=-A$.
For any infinitesimal symplectomorphism $A$, we can associate  a {\em quadratic Hamiltonian $h_{A}$} on $\H$,
\beq
h_A(f)=\frac{1}{2}\Omega(Af,f).
\eeq
The quadratic Hamiltonians are quantized by the rules:
\beq\label{eq:quadratic}
(p_{k,i}p_{l,j})\sphat = \hbar\frac{\d^2}{\d q_k^i\d q_l^j},\quad
(p_{k,i}q_l^j)\sphat = (q_l^jp_{k,i})\sphat = q_l^j\frac{\d}{\d q_k^i},\quad
(q_k^iq_l^j)\sphat =\frac{q_k^iq_l^j}{\hbar}.
\eeq
The quantization of $E$ is defined by
\beqa
\widehat{E}=e^{\widehat{A}}:=e^{\widehat{h_A}}.
\eeqa

For an upper-triangular symplectomorphism $E$, there is an explicit formula for the quantization $\widehat{E}$. Put
$$\q(z)=\sum_{k=0}^{\infty}\sum_{i=0}^{N-1}q^{i}_k\, \d_i
z^{k}\in H[[z]].$$
Denote the {\em dilaton shift} by $\widetilde{\q}(z)=\q(z)+{\bf 1}z$, i.e., $\widetilde{q}^{i}_{k}=q^{i}_{k}+\delta^{1}_{k}\delta^{i}_{0}$.
Recall that {\em the ancestor GW potential} of $X$ is
\beq
\A^{X}(\hbar,\q(z))
:=\exp\Big(\sum_{g,n}\sum_{\beta\in{\rm NE}(X)}\sum_{\iota_i,k_i=0}^{\infty}\frac{\hbar^{g-1}\LD\tau_{\iota_1}\partial_{k_1},\cdots,\tau_{\iota_n}\partial_{k_n}\RD_{g,n,\beta}^{X}\,q^{\beta}}{n!}\prod_{i=1}^{n}\widetilde{q}_{k_i}^{\iota_i}\Big).
\eeq
$\A^{X}(\hbar,\q(z))$ belongs to a Fock space $\C[[\q_0,\widetilde{\q}_1,\q_2,\cdots]]$.
The action of the quantization operator $\widehat{E}$, whenever it makes sense, is given by the following formula:
\beq\label{R:fock}
\widehat{E}\big(\A^{X}(\hbar,\q(z))\big)= \left.\big(e^{W_E}\A^{X}(\hbar,\q(z))\big)\right|_{\q\mapsto E^{-1}\q},
\eeq
where $E^{-1}\q$ is the change of $\q$-coordinate
$$(E^{-1}\q)_k^i=\sum_{l=0}^k\sum_{j=0}^{N-1}(E^{-1})_l^{ji}q_{k-l}^j.$$
And $W_E$ is the quadratic differential operator
\beq\label{eq:quad}
W_E:=\frac{\hbar}{2}\sum_{k,l=0}^{\infty}\sum_{i,j=0}^{N-1}\( \d^i,V_{kl}(\d^j)\)\frac{\d^2}{\d  q_k^i \d q_l^j},
\eeq
whose coefficients $V_{kl}\in{\rm End}(H)$ are given by
\beq\label{V:fock}
\sum_{k,l\geq0}V_{kl}(-z)^k(-w)^l = \frac{E^*(z)E(w)-{\rm Id}}{z+w}.
\eeq
\begin{remark}
Givental also considered the quantization of a general symplectomorphism of the form $e^A$.
For example, $A$ could be lower triangular in the sense containing the negative power of $z$.
The lower triangular one can not be lift to cycle level. Hence, it will not be considered here.
\end{remark}

\subsection{Cycle-valued Quantization}\label{sec:2.3}
Teleman \cite{Te} was able to lift the quantization of an upper triangular symplectic transformation to the level of cohomological field theory.
Let us describe his construction. According to formula \eqref{R:fock},
the action of $\widehat{E}$ is a composition of two operations: exponential of the Laplace type operator \eqref{eq:quad}
followed by the {\em coordinate change} $\q\mapsto E^{-1}\q$.

\subsubsection{Coordinate Change}\label{sec:2.3.1}
Let $\La_{g,n}$ be any multi-linear function on $H^{\otimes n}$ with values in the
cohomology ring of $\overline{\M}_{g,n}$.
We can extend $\Lambda_{g,n}$ from $H^{\otimes n}$ to $\H^{\otimes n}_+$ uniquely so that multiplication by $z$
is compatible with the multiplication by psi-classes, i.e.,,
\beq\label{z-psi}
\Lambda_{g,n}(\sum_{i\geq 0} \gamma_1 z^i, \cdots)=\sum_{i\geq 0}\Lambda_{g,n}(\gamma_1, \cdots)\psi^i_1.
\eeq
Given an isomorphis of $\C[z]$-modules
$$
\Phi(z):H_1[[z]]\to H_{2}[[z]],
$$
we define
$$
(\Phi(z)\circ\La)_{g,n}(\gamma_1,\cdots,\gamma_n)
=\La_{g,n}(\Phi(z)^{-1}(\gamma_1),\cdots,\Phi(z)^{-1}(\gamma_n))\in H^*(\overline{\M}_{g,n},\C).
$$
Note that even if $\Lambda$ is a CohFT, $\Phi(z)\circ\Lambda$ might fail to be a CohFT.

\subsubsection{Feynman type sum}\label{sec:2.3.2}

The action of the exponential of the Laplacian \eqref{eq:quad} can be described in terms of sum over graphs.
Let us explain this in some more details. For a given graph $\Gamma$ let us denote by $V(\Gamma)$ the set
of vertices, $E(\Gamma)$ the set of edges, and by $T(V)$ the set of tails. For a fixed vertex $v\in V(\Gamma)$
we denote by $E_v(\Gamma)$ and $T_v(\Gamma)$ respectively the set of edges and tails incident with $v$.
The graph is {\em decorated} in the following way: each vertex $v$ is assigned a non-negative number  $g_v$
called genus of $v$; there is a bijection $t\mapsto m(t)$ between the set of tails and the set of integers
$\{1,2,\dots,{\rm Card}(T(\Gamma))\}$, and finally every flag $(v,e)$ (i.e., a pair consists a vertex and an incident edge)
is decorated with a vector $z^k \d^i$ ($k\geq 0$).

Furthermore, for a given edge $e$ we define a {\em propagator} $V_e$ as follows. Let $v'$, $v''$ be the two vertexes
incident with $e$ and let $z^{k'}\d^{i'}$ and $z^{k''}\d^{i''}$ be the labels respectively of the flags $(v',e)$ and $(v'',e)$; then
we define
\ben
V_e = \(\d^{i'},V_{k',k''}\d^{i''}\).
\een
Note that since $^*V_{k',k''}=V_{k'',k'}$ the definition of $V_e$ is independent of the orientation of the edge $e$.
For every vertex $v$ we define the differential operator
\ben
D_\q^v = \prod_{e\in E_v(\Gamma)} \, {\d}/{\d q_{k(e)}^{i(e)}} ,
\een
where $z^{k(e)}\d^{(i(e)}$ is the label of the flag $(v,e)$.

Given any formal function
$\A(\hbar;\q)=\exp\Big(\sum\hbar^{g-1} \F^{(g)}(\q)\Big)$
we have
\beq\label{feynman}
e^{W_E}\ \A(\hbar;\q) = \exp\Big( \sum_{\Gamma} \frac{1}{|{\rm Aut}(\Gamma)|}\,
\prod_{e\in E(\Gamma)}\, V_e \prod_{v\in V(\Gamma)}\, D_\q^v \F^{(g_v)}(\q)\ \Big),
\eeq
where the sum is over all connected decorated graphs $\Gamma$ and $|{\rm Aut}(\Gamma)|$ is the number of automorphisms
of $\Gamma$ compatible with the decoration.

Motivated by formula \eqref{feynman} we define
\beq\label{do:CohFT}
(e^{W_E}\circ \Lambda)_{g,n}(\gamma_1\otimes\cdots\otimes \gamma_n)
\eeq
by the following formula:
\ben
\sum_{\Gamma} \frac{1}{|{\rm Aut}(\Gamma)|}\,
\prod_{e\in E(\Gamma)}\, V_e \prod_{v\in V(\Gamma)}\, \La_{g_v, r_v+n_v}\Big(
\otimes_{e\in E_v(\Gamma)} \d_{i(e)} \psi^{k(e)}\otimes_{t\in T_v(\Gamma)} \gamma_{m(t)}\Big),
\een
where $r_v={\rm Card}(E_v(\Gamma))$, $n_v={\rm Card}(T_v(\Gamma))$, and the sum is over all connected, decorated, genus-g
graphs $\Gamma$ with $n$ tails. Note that this definition is compatible with \eqref{feynman} in a sense that the potential of
the multi-linear maps \eqref{do:CohFT} coincides with \eqref{feynman}.

For an upper-triangular symplectic transformation $E$, we define
\beq\label{Quant-E}
\widehat{E}\circ\Lambda:=E\circ(e^{W_E}\circ\Lambda).
\eeq
Using induction on the number of nodes, it is not hard to check that $\widehat{E}\circ\La$ is a CohFT (see \cite{Te}).

\subsubsection{Classification of semi-simple CohFT}\label{sec:2.3.3}
Let $(H,\eta,\bullet)$ be a semi-simple Frobenius algebra. We pick an orthonormal basis $\{e_i\}$ of $H$, which allows us
to identify  $(H,\eta,\bullet)$ with the Frobenius algebra of a trivial CohFT, i.e.,, the state space of
$I^{N,\Delta}$ for a particular $\Delta$ (see \eqref{trivial-cohft}). In this
section we would like to recall the classification of all CohFTs whose state space is $(H,\eta,\bullet)$.

The space of such CohFTs admits the action \eqref{Quant-E} of the group $\L^{(2)}_+{\rm GL} (H)$. Note that this action does not change the Frobenius multiplication on $H$. On the other hand, the Abelian group $z^2H[z]$ (with group operation addition) acts on the space of CohFTs via translations. Namely, given $a(z)\in z^2H[z]$, we define
\ben
(T_{a(z)}\circ\Lambda)_{g,n}
(\gamma_1,\cdots,\gamma_n)
\een
by the fomula
\beq\label{eq:trans}
\sum_{k\geq0}\frac{(-1)^k}{k!}
\pi_*\big(\Lambda_{g,n+k}(\gamma_1,\cdots,\gamma_n,a(z),\cdots,a(z))\big),
\eeq
where for each $k$, the map $\pi$ in the $k$-th summand is the map forgetting the last $k$ marked points.
This action also preserves the Frobenius multiplication. Moreover, the following formula holds:
\beq\label{trans:twist}
T_{a(z)} \circ \widehat{E}\circ T^{-1}_{a(z)} = \widehat{E}\circ T_{a(z)-E^{-1}a(z)},
\eeq
i.e.,, we have an action of the group $z^2H[z]\rtimes \L^{(2)}_{+}{\rm GL}(H)$ on the set of CohFTs with state space
$(H,\eta,\bullet)$.
According to Teleman (see \cite{Te}, Theorem 2)
\begin{theorem}(\cite{Te})\label{teleman2}
The orbit of the group $z^2H[z]\rtimes \L^{(2)}_{+}{\rm GL}(H)$ containing
$I^{N,\Delta}$ consists of all CohFTs whose underlying Frobenius algebra is $(H,\eta, \bullet)$.
\end{theorem}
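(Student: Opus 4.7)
The plan is to show that every CohFT $\Lambda$ with underlying Frobenius algebra $(H,\eta,\bullet)$ can be written as $\widehat{R}\circ T_{a}\circ I^{N,\Delta}$ for appropriate $R\in \L^{(2)}_+{\rm GL}(H)$ and $a(z)\in z^2H[z]$. Equivalently, after identifying $(H,\eta,\bullet)$ with the trivial Frobenius algebra $(\C^N,\Delta)$ via the normalized idempotent basis, we construct $R$ and $a$ so that the transformed CohFT $\widetilde\Lambda := T_a^{-1}\circ\widehat R^{-1}\circ\Lambda$ coincides with $I^{N,\Delta}$.

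The first step is to build $R(z)={\rm Id}+R_1 z+R_2 z^2+\cdots$ order by order, using only the genus-zero correlators of $\Lambda$. At each order $k$, the graph-sum formula \eqref{do:CohFT} applied to $\widehat{R}\circ I^{N,\Delta}$ contributes to a genus-zero $\psi^k$-correlator on $\overline{\M}_{0,k+2}$ a term linear in $R_k$ plus data determined by $R_1,\ldots,R_{k-1}$; semisimplicity of $(H,\bullet)$ makes the relevant linear system uniquely solvable for $R_k$. The symplectic constraint $R^*(-z)R(z)={\rm Id}$ is then enforced by the genus-zero WDVV and string equations satisfied by the correlators of $\Lambda$.

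Step two is to choose $a(z)\in z^2 H[z]$ as the (essentially unique) polynomial that together with $R$ absorbs the dilaton-shift discrepancy $z{\bf 1}-R(z)^{-1}(z{\bf 1})$, and to set $\widetilde\Lambda := T_a^{-1}\circ\widehat R^{-1}\circ\Lambda$. Using the intertwining \eqref{trans:twist} and the flat-identity axiom, one checks that $\widetilde\Lambda$ has state space $(\C^N,\Delta)$, shares its unit with $I^{N,\Delta}$, and by step one agrees with $I^{N,\Delta}$ on every genus-zero correlator $\Lambda_{0,n}$ for $n\geq 3$.

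It remains to show $\widetilde\Lambda_{g,n} = I^{N,\Delta}_{g,n}$ in every genus, which is the technical heart of the argument and the main obstacle. I would proceed by induction on $2g-2+n$: the inductive hypothesis combined with the gluing axioms (ii)--(iii) forces the class $\widetilde\Lambda_{g,n}-I^{N,\Delta}_{g,n}$ to pull back trivially along every nodal boundary morphism, so it is supported on the interior locus of smooth curves. The crucial input is then a vanishing statement of Getzler-Ionel type: a tautological class produced from semisimple CohFT data that is concentrated on the stratum of curves with no rational tails, in a sufficiently high codimension range, must vanish identically. Combined with the boundary agreement from the induction, this forces $\widetilde\Lambda_{g,n}-I^{N,\Delta}_{g,n}=0$, completing the induction and establishing the theorem. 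The hardest part is precisely this last tautological vanishing, as it is what makes the semisimplicity hypothesis do its job at arbitrary genus; everything preceding it is a formal manipulation of genus-zero data.
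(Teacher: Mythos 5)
You should first know that the paper contains no proof of this statement at all: Theorem \ref{teleman2} is quoted verbatim from Teleman (\cite{Te}, Theorem 2), so your attempt must be measured against Teleman's own argument. Your Steps 1--2 (building $R$ order by order from low-genus correlators and choosing $a(z)$ to absorb the dilaton-shift discrepancy) are roughly in the spirit of the normalization one performs, though note that genus-zero data alone does not obviously pin down all the group parameters --- translations $T_{a(z)}$ and the diagonal part of $E$ also govern genuinely higher-genus couplings to $\kappa$-classes, and Teleman matches these directly rather than deducing them from genus zero. The genuine gap, however, is Step 3. The vanishing statement you invoke does not exist in the generality you need: Getzler--Ionel vanishing applies to \emph{tautological} classes of cohomological degree at least $g$, whereas the difference class $\widetilde\Lambda_{g,n}-I^{N,\Delta}_{g,n}$ is not known to be tautological (an abstract CohFT takes values in all of $H^*(\overline{\M}_{g,n},\C)$, not just the tautological ring) and carries no degree bound whatsoever, since Theorem \ref{teleman2} has no homogeneity hypothesis --- the Euler-grading that forces degrees to grow with $g$ enters only in Teleman's Theorem 1 (the paper's Theorem \ref{hg-rec}), not in the classification statement. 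Moreover, the inference ``all boundary pullbacks vanish, hence the class is concentrated on the interior and dies in high codimension'' fails as a mechanism: classes such as $\lambda_g\lambda_{g-1}$ on $\overline{\M}_g$ restrict to zero on the entire boundary yet are nonzero, so boundary agreement in your induction cannot by itself force $\widetilde\Lambda_{g,n}=I^{N,\Delta}_{g,n}$, and your induction on $2g-2+n$ does not close.

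Teleman's actual proof uses input of exactly the opposite flavor: instead of high-degree vanishing, he controls the \emph{low} degrees via Harer stability and the Madsen--Weiss theorem (Mumford's conjecture), which identify the stable-range cohomology of $\M_{g,n}$ as a polynomial ring on $\kappa$- and $\psi$-classes; the group directions are precisely sufficient to match these stable couplings together with the node propagators $V_{kl}$. To descend from the stable range (large $g$, fixed degree) to all $(g,n)$ he uses semisimplicity in an essential and different way from what you propose: attaching a handle inserts the Euler class $\sum_{\alpha,\beta}\eta^{\alpha,\beta}\,\alpha\bullet\beta$ of the Frobenius algebra, which is invertible exactly when the algebra is semisimple, so handle attachment acts injectively on the CohFT data and propagates the identification downward in genus. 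It is worth noting that the paper itself flags this issue in the introduction: the $g$-reduction argument based on Getzler--Ionel vanishing, which works for the numerical invariants in \cite{KS,MR}, ``does not apply'' at the cycle level and is replaced there by Teleman's theorem. Your Step 3 is in effect an attempt to resurrect $g$-reduction on cycles --- the very route the authors deliberately abandon --- so the hardest part of your proposal rests on a lemma that is both unavailable and contradicted by known nonvanishing boundary-trivial classes.
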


Let $a(z)\in zH[z]$ be arbitrary. Although the translation $T_{a(z)}$ is singular
and will be not well defined after replacing multiplication by $z$ in terms of multiplication by psi-classes, the RHS of formula \eqref{trans:twist} always
makes sense since $a(z)-E^{-1}a(z)\in z^2H[z]$. Therefore, we can define
the following subgroup of $z^2H[z]\rtimes \L^{(2)}_{+}{\rm
  GL}(H)$:
\ben
\L^{(2)}_{a(z)}{\rm GL}(H) = T_{a(z)}\circ \L^{(2)}_{+}{\rm
  GL}(H)\circ T^{-1}_{a(z)} .
\een
The following fact follows easily from Theorem \ref{teleman2}
\begin{corollary}
Let $a(z)={\bf 1}\,z$; then the orbit of the subgroup $\L^{(2)}_{a(z)}{\rm GL}(H)$ containing $I^{N,\Delta}$
consists of all CohFTs with a flat identity whose underlying Frobenius algebra is $(H,\eta,\bullet)$.
\end{corollary}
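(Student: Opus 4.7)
The plan is to use Teleman's classification (Theorem \ref{teleman2}) to write an arbitrary CohFT $\Lambda$ with Frobenius algebra $(H,\eta,\bullet)$ in the form $\widehat{E}\circ T_{c(z)}\circ I^{N,\Delta}$ for some $E\in\L^{(2)}_+{\rm GL}(H)$ and $c(z)\in z^2 H[z]$, and then to show that $\Lambda$ has a flat identity if and only if $c(z)=\mathbf{1}z-E^{-1}\mathbf{1}z$. By the intertwining formula \eqref{trans:twist} with $a(z)=\mathbf{1}z$, this last condition is precisely the condition that $\widehat{E}\circ T_{c(z)}$ belong to the subgroup $\L^{(2)}_{\mathbf{1}z}{\rm GL}(H)$, which will establish the corollary.

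First I would verify that the trivial CohFT $I^{N,\Delta}$ itself has a flat identity with unit $\sum_i\Delta_i^{-1/2}e_i$: its classes are scalars and hence agree with their pullback under any forgetful map, and a short direct computation from \eqref{trivial-cohft} recovers the identity $I^{N,\Delta}_{g,n+1}(\gamma_1,\ldots,\gamma_n,\mathbf{1})=\pi^{*}I^{N,\Delta}_{g,n}(\gamma_1,\ldots,\gamma_n)$. For the sufficiency direction ($c(z)=\mathbf{1}z-E^{-1}\mathbf{1}z\Rightarrow$ flat identity), I would rewrite $\widehat{E}\circ T_{c(z)}$ via \eqref{trans:twist} as the formal conjugate $T_{\mathbf{1}z}\circ\widehat{E}\circ T^{-1}_{\mathbf{1}z}$; interpreting $T^{-1}_{\mathbf{1}z}$ as removing the dilaton shift built into the ancestor potential and $T_{\mathbf{1}z}$ as reinstating it, one checks that this conjugation carries flat-identity CohFTs to flat-identity CohFTs, essentially because it preserves the pair (Frobenius multiplication, unit $\mathbf{1}$).

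For the necessity direction, I would use the explicit graph-sum formulas \eqref{R:fock} and \eqref{eq:trans} together with the flat identity of $I^{N,\Delta}$ to expand the difference
\begin{equation*}
\bigl(\widehat{E}\circ T_{c(z)}\circ I^{N,\Delta}\bigr)_{g,n+1}(\gamma_1,\dots,\gamma_n,\mathbf{1})-\pi^{*}\bigl(\widehat{E}\circ T_{c(z)}\circ I^{N,\Delta}\bigr)_{g,n}(\gamma_1,\dots,\gamma_n)
\end{equation*}
in powers of $z$; a term-by-term comparison forces $c(z)=\mathbf{1}z-E^{-1}\mathbf{1}z$. Alternatively, one argues that, by the sufficiency already proved, the CohFT $\Lambda':=\widehat{E}\circ T_{\mathbf{1}z-E^{-1}\mathbf{1}z}\circ I^{N,\Delta}$ has flat identity and the same underlying Frobenius algebra as $\Lambda$, and then uses the uniqueness of the Givental--Teleman decomposition in Theorem \ref{teleman2} to conclude $c(z)=\mathbf{1}z-E^{-1}\mathbf{1}z$.

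The main obstacle will be making the formal manipulation around $T_{\mathbf{1}z}$ rigorous: this translation is singular on an individual CohFT, since replacing $z$ by the $\psi$-class produces a divergent series of forgetful pushforwards, even though the conjugated action $T_{\mathbf{1}z}\circ \widehat{E}\circ T^{-1}_{\mathbf{1}z}$ is well-defined by \eqref{trans:twist}. Carefully tracking the $\psi$-class insertions at the tails labeled by $\mathbf{1}$, and exhibiting the cancellations responsible for preserving the flat identity under the conjugation, will require combining the pushforward formula \eqref{eq:trans} with the flat identity axiom (v) in tandem; this bookkeeping is the delicate part of the argument.
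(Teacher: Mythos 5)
Your overall plan is exactly the derivation the paper gestures at (it offers nothing beyond ``follows easily from Theorem \ref{teleman2}''): decompose $\Lambda=\widehat{E}\circ T_{c(z)}\circ I^{N,\Delta}$ via Theorem \ref{teleman2}, prove that the flat identity axiom holds precisely when $c(z)=\mathbf{1}z-E^{-1}\mathbf{1}z$, and identify that condition with membership in $\L^{(2)}_{\mathbf{1}z}{\rm GL}(H)$ through \eqref{trans:twist}. Your preliminary check that $I^{N,\Delta}$ has flat identity with unit $\sum_i\Delta_i^{-1/2}e_i$ is also correct. But two of your justifications are genuinely defective. In the sufficiency direction, the clause ``essentially because it preserves the pair (Frobenius multiplication, unit $\mathbf{1}$)'' proves nothing: the paper explicitly notes that the \emph{entire} group $z^2H[z]\rtimes\L^{(2)}_{+}{\rm GL}(H)$ preserves the Frobenius multiplication (hence the unit), yet a generic element destroys flat identity --- for instance $T_{d(z)}\circ I^{N,\Delta}$ with $d(z)\in z^2H[z]$, $d\neq 0$, already violates axiom (v). The actual mechanism is the cancellation between the extra tails decorated by $E^{-1}$ and the boundary corrections arising from the comparison $\pi^{*}\psi_i=\psi_i-D_i$ on $\overline{\M}_{g,n+1}$ under the pushforwards in \eqref{eq:trans}. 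You do flag this bookkeeping at the end, but it is the entire content of the direction rather than a technicality, so as written the sufficiency step is asserted, not proved.

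Second, your fallback necessity argument is circular and also not supported by the cited statement. The paper's Theorem \ref{teleman2} asserts only that the orbit exhausts all CohFTs with the given Frobenius algebra (transitivity); it says nothing about freeness of the action, so no ``uniqueness of the Givental--Teleman decomposition'' is available from it. Even granting freeness (which does hold in Teleman's original paper), knowing that $\Lambda$ and $\Lambda':=\widehat{E}\circ T_{\mathbf{1}z-E^{-1}\mathbf{1}z}\circ I^{N,\Delta}$ share a flat identity and the same underlying Frobenius algebra does not force $\Lambda=\Lambda'$: distinct elements of $\L^{(2)}_{\mathbf{1}z}{\rm GL}(H)$ produce distinct flat-identity CohFTs over the same algebra, so this route begs the very question being proved. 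Your first necessity route is the one that works and should be promoted to the main argument: write $c(z)=\mathbf{1}z-E^{-1}\mathbf{1}z+d(z)$ with $d(z)\in z^2H[z]$, use additivity of translations and the (properly proved) sufficiency to strip off the subgroup element $T_{\mathbf{1}z}\circ\widehat{E}\circ T_{\mathbf{1}z}^{-1}$, thereby reducing to the claim that $T_{d}\circ I^{N,\Delta}$ satisfies axiom (v) only if $d=0$; this follows by comparing the terms of lowest order in $d$ in the difference of the two sides of axiom (v).
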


\subsubsection{Higher-genus reconstruction}
For $\t=\sum_{i=0}^{N-1}t_i\d_i\in H,$
we define a {\em translation} operator $T_{\t}$ acting
on a CohFT $\Lambda$ by
\beq\label{eq:trans}
(T_{\t}\circ\Lambda)_{g,n}
(\gamma_1,\cdots,\gamma_n) :=
\sum_{k\geq0}\frac{(-1)^k}{k!}\pi_*\big(\Lambda_{g,n+k}(\gamma_1,\cdots,\gamma_n,\t,\cdots,\t)\big).
\eeq
For brevity we put $_{\t}\La :=T_{\t}\circ\Lambda$.
According to Teleman (see \cite{Te}, Proposition 7.1), ${}_{\t}\Lambda$ is a {\em formal} CohFT, i.e.,,
\beqa
({}_{\t}\Lambda)_{g,n}\in \big(H^*(\overline{\M}_{g,n},\C)\otimes\C[[\t]]\big)\otimes (H^{\vee})^{\otimes n}, \quad \C[[\t]]:=\C[[t_0,\cdots,t_{N-1}]].
\eeqa
It induces a ring structure on $H$ with the multiplication
$\bigstar_{\t}$ defined by
\beq
\gamma_1\bigstar_{\t}\gamma_2=\sum_{\alpha,\beta\in\mathscr{S}}\int_{\overline{\M}_{0,3}}({}_{\t}\Lambda)_{0,3}(\gamma_1,\gamma_2,\alpha)\,\eta^{\alpha,\beta}\,\beta,\quad \gamma_1,\gamma_2\in H.
\eeq
Let us assume that the vector space $H$ is graded and that $\{\d_i\}$ is a homogeneous basis with ${\rm deg}(\d_i)=1-d_i$.
We further assume that we are given an {\em Euler vector field} of the form
\ben
\E=\sum_{i=0}^{N-1} \, d_i\,t_i\, \frac{\d}{\d t_i} + \sum_{j:d_j=0}\ r_j \, \d_j,
\een
where $r_j$ are some constants, so that the ring of formal power series $\C[[\t]]$ is a graded ring: an element $f(\t)$
is homogeneous of degree $d_f$ iff $E(f(\t)) = d_f \, f(\t)$.
The CohFT is called {\em homogeneous} of conformal weight $d$ if $H$ is graded, there exists an Euler vector field, and
the maps:
\ben
{}_\t\La_{g,n}: H^{\otimes n} \rightarrow H^*(\overline{\M}_{g,n};\C)\otimes \C[[\t]]
\een
are homogeneous of weight $d(g-1)+n$. Here the source of $ {}_\t\La_{g,n}$ inherits the grading from $H$, the first tensor factor
in the target is graded by {\em halfing} the degree of a cohomology class, and $\C[[\t]]$ is graded by $E$.

Let us assume further that the CohFT $\La$ is \emph{generically semisimple}, i.e.,, the ring
structure $\bigstar_{\t}$ is semisimple for generic $\t$. We denote by $u_i(\t)$ the corresponding canonical coordinates,
so that the map
\ben
\Psi(\t):\C^N \rightarrow H,\quad e_i\mapsto \sqrt{\Delta_i(\t)} \d/\d u_i(\t)
\een
identifies the Frobenius algebras $(H,\eta, \bigstar_\t)$ and $(\C^N,\Delta(\t))$, where
\ben
\Delta(\t):=(\Delta_1(\t),\dots,\Delta_N(\t)).
\een
Let $U(\t)$ be the diagonal matrix with entries
$u_i(\t), 0\leq i\leq N-1$. Following Givental we define an upper-triangular
symplectic transformation $R(\t)$, such that the formal asymptotical series $\Psi(\t)R(\t)e^{U(\t)}$ is a solution
to the differential equations \eqref{frob_eq1} and \eqref{frob_eq2}. In fact, these equations determine $R(\t)$
uniquely in terms of  $\Psi(\t)$ and $U(\t)$ (see \cite{G2}). According to Teleman (see \cite{Te}, Theorem 1) we have the
following higher-genus reconstruction result.
\begin{theorem}(\cite{Te})\label{hg-rec}
If $\La$ is a homogeneous CohFT with flat identity and $\bigstar_\t$ is (formal) semi-simple; then
\ben
{}_\t\La = \widehat{\Psi}(\t)\circ (T_{z} \circ \widehat{R}(\t)\circ T^{-1}_{z} )\circ I^{N,\Delta(\t)},
\een
where $T_z:=T_{\one\,z}$.
\end{theorem}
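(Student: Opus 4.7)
The plan is to combine Teleman's classification (Theorem \ref{teleman2} and the Corollary that follows) with the uniqueness of Givental's R-matrix, which is pinned down by the Frobenius manifold structure together with the homogeneity assumption. By the Corollary, for each $\t$ in the semisimple locus, ${}_\t\La$ is a CohFT with flat identity and semisimple underlying Frobenius algebra $(H,\eta,\bigstar_\t)$, so it must lie in the orbit of $I^{N,\Delta(\t)}$ under the subgroup $\L^{(2)}_{\one z}{\rm GL}(H)$. Absorbing the identification $\Psi(\t):(\C^N,\Delta(\t))\to(H,\eta,\bigstar_\t)$ into the picture, one obtains
\begin{equation*}
{}_\t\La \;=\; \widehat{\Psi}(\t)\circ\bigl(T_z\circ\widehat{\widetilde R}(\t)\circ T_z^{-1}\bigr)\circ I^{N,\Delta(\t)}
\end{equation*}
for some upper-triangular symplectic $\widetilde R(\t)\in\L^{(2)}_+{\rm GL}(\C^N)$ depending on $\t$. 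The theorem then reduces to the identification $\widetilde R(\t)=R(\t)$.

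First I would extract enough information from the genus-zero data of ${}_\t\La$ to show that $\widetilde R(\t)$ satisfies the same Dubrovin-type flatness equation in $\t$ as the Givental $R(\t)$. The genus-zero three-point correlators of ${}_\t\La$ tautologically reconstruct the Frobenius manifold $(H,\eta,\bigstar_\t)$, and the $\psi$-class descendants in genus zero encode the full Dubrovin connection. Passing these identities through the Feynman-sum description \eqref{do:CohFT} of $\widehat{\widetilde R}$ translates them into an ODE for $\widetilde R(\t)$ along canonical-coordinate directions, whose general solution coincides with $R(\t)$ up to right multiplication by a $\t$-independent diagonal gauge transformation of the form $\exp\bigl(\sum_{k\geq1}a_{2k-1}z^{2k-1}\bigr)$ (the symplectic/odd constraint forces odd powers of $z$ and diagonal coefficients in the canonical basis).

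The remaining constant ambiguity is killed by invoking homogeneity. The assumption that $\La$ has conformal weight $d$, combined with the existence of the Euler field $\mathcal{E}$, translates after the translation $T_\t$ and conjugation by $\widehat\Psi(\t)^{-1}$ into a scaling identity for $\widetilde R(\t)$ of the form
\begin{equation*}
\bigl(\mathcal{E}+z\partial_z\bigr)\bigl(\Psi(\t)\widetilde R(\t)e^{U(\t)/z}\bigr)\;=\;\mu\,\Psi(\t)\widetilde R(\t)e^{U(\t)/z},
\end{equation*}
with $\mu$ the grading operator read off from $\{d_i\}$ and $d$. This is precisely the normalization that characterizes $R(\t)$ uniquely among solutions of the Dubrovin flatness equations (see \cite{G2}), so $\widetilde R(\t)=R(\t)$ and the theorem follows.

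The hard part is this last step: correctly transporting the homogeneity of the cohomological field theory, a statement about degrees of cycle classes on $\Mgn$, through both the Feynman-graph action \eqref{do:CohFT} of $\widehat{\widetilde R}(\t)$ and the conjugation by the singular dilaton shift $T_z$, and verifying that the resulting scaling law matches Givental's normalization of $R(\t)$ order by order in $z$. This is where the flat identity axiom enters crucially, since it is exactly what makes $T_z\circ(\cdot)\circ T_z^{-1}$ a well-defined operation on CohFTs and picks out the correct subgroup $\L^{(2)}_{\one z}{\rm GL}(H)$ rather than the larger translation group appearing in Theorem \ref{teleman2}.
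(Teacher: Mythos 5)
First, a framing point: the paper contains no proof of Theorem \ref{hg-rec} to compare against --- it is quoted verbatim from \cite{Te} (Theorem 1 there), just as Theorem \ref{teleman2} is quoted as Teleman's Theorem 2. So your proposal has to stand on its own as a derivation of Teleman's Theorem 1 from his Theorem 2 plus Givental's uniqueness of $R(\t)$. That reduction is not how \cite{Te} actually proceeds: in Teleman's paper both the classification and the reconstruction come out of the same topological input (the Madsen--Weiss/Mumford-conjecture computation of stable mapping-class-group cohomology and an induction over boundary strata), and the rigidity of the $R$-matrix action on semisimple theories is part of that hard content rather than a corollary of the orbit statement. The reduction you propose is folklore and not circular \emph{in principle}, but as written it has genuine gaps at exactly the points carrying the weight.

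Concretely: (i) Theorem \ref{teleman2} as quoted gives only \emph{transitivity} (``the orbit \dots consists of all CohFTs''), not \emph{freeness} of the action, so ``the'' $\widetilde R(\t)$ is not well defined, and nothing guarantees a selection $\t\mapsto\widetilde R(\t)$ that is even continuous, let alone differentiable --- yet your step two differentiates it in $\t$ to get a Dubrovin-type ODE. Relatedly, you apply a pointwise classification to a \emph{formal} family over $\C[[\t]]$ with only generic semisimplicity; the paper itself, when it runs a parallel argument in Theorem \ref{convergence}, has to pass to $\overline{{\rm Frac}\,\C[[e^t,t_0,\dots,t_{\mu-2}]]}$ to make sense of this, and you do not address the base ring at all. (ii) The pivotal claim that matching genus-zero data forces $\widetilde R(\t)$ to satisfy the flatness equation \eqref{frob_eq1} up to right multiplication by constant diagonal $\exp\bigl(\sum_{k\geq1}a_{2k-1}z^{2k-1}\bigr)$ is asserted, not proved; it amounts to an injectivity statement about the genus-zero sector of the dressed action (equivalently, a computation of the stabilizer of the genus-zero part of $T_z\circ\widehat{E}\circ T_z^{-1}\circ I^{N,\Delta}$), and no argument is given. (iii) You yourself label the transport of cohomological homogeneity through the graph sum \eqref{do:CohFT} and the conjugation \eqref{trans:twist} into the operator identity involving $z\partial_z+\E$ as ``the hard part'' and leave it unexecuted --- but this is precisely the step that singles out Givental's homogeneous $R(\t)$ (the normalization in \eqref{frob_eq2}, cf.\ \cite{G2}) from the rest of the orbit. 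So the proposal establishes, modulo (i), the \emph{existence} of some dressing $\widetilde R(\t)$, but the identification $\widetilde R(\t)=R(\t)$, which is the actual content of the theorem, is not proved.
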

Let us finish this section by drawing an important corrolary from the above theorem.
The {\em ancestor potential} of a (singular) CohFT $\La$ is a function of $a(z)=\sum_{i\geq0}a_iz^i\in H[[z]]$, defined by
\beq\label{cohft-potential}
\A\big(\La,\hbar,a(z)\big)=
\exp\Big(\sum_{g,n}\frac{\hbar^{g-1}}{n!}\int_{\overline{\M}_{g,n}}\La_{g,n}(a(z),\cdots,a(z))\Big).
\eeq
Note that the translation $T_z^{-1}$ induces the so called {\em dilaton shift}, i.e, $\q(z)\mapsto\widetilde{\q}(z)=: \q(z)+{\bf 1}z$,
\beq
\A\big(T_{z}^{-1}\circ\La,\hbar,\q(z)\big)=\A\big(\La,\hbar,\widetilde{\q}(z)\big).
\eeq
Furthermore, following Givental, the {\em formal} ancestor potential
\ben
\A^{{formal}}(\t)(\hbar,\q(z))
\een
of (the germ of) the Frobenius structure $(H,\eta,\bigstar_\t)$ is defined by
\beq\label{formal-potential}
\widehat{\Psi}(\t)\ \widehat{R}(\t)\
e^{ (  U(\t)/z  )\sphat }\
\prod_{i=1}^{\mu}
\A^{\rm pt}\big(\hbar\Delta_i(\t),{}^{i}\widetilde{\q}(z)\sqrt{\Delta_i(\t)}\big),
\eeq
where $\widehat{\Psi}$ means change of the variables $\q(z)\mapsto \Psi^{-1}(\t)\,\q(z)$ and $\A^{\rm pt}$
is the total ancestor potential of the CohFT $I^{N=1,\Delta=1}$.
\begin{corollary}
Under the same assumption as in Theorem \ref{hg-rec} the following formula holds:
$$
\A({}_{\t}\La,\hbar,\widetilde{\q}(z))=\A^{{formal}}(\t)(\hbar,\q(z)).$$
\end{corollary}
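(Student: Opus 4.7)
The plan is to evaluate the total-ancestor-potential functional defined in \eqref{cohft-potential} on both sides of Teleman's reconstruction identity in Theorem~\ref{hg-rec} and check that the right-hand side collapses to Givental's formal potential \eqref{formal-potential}. Unpacking the definition,
\ben
\A({}_\t\La,\hbar,\widetilde{\q}(z))=\exp\Big(\sum_{g,n}\frac{\hbar^{g-1}}{n!}\int_{\Mbar_{g,n}}({}_\t\La)_{g,n}(\widetilde{\q}(z),\ldots,\widetilde{\q}(z))\Big).
\een
The first difficulty is that the individual operators $T_z^{\pm 1}$ in Teleman's formula are singular at the CohFT level, since translation along $\one z\in zH[z]$ is not covered by the $z^2H[z]$-action of \eqref{eq:trans}; however, the twist formula \eqref{trans:twist} makes the conjugated combination well-defined,
\ben
T_z\circ\widehat{R}(\t)\circ T_z^{-1}=\widehat{R}(\t)\circ T_{c(z)},\qquad c(z):=\one z-R(\t)^{-1}\one z\in z^2H[[z]],
\een
so that ${}_\t\La=\widehat{\Psi}(\t)\circ\widehat{R}(\t)\circ T_{c(z)}\circ I^{N,\Delta(\t)}$.

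We then peel off each factor through the ancestor-potential functional. For the trivial seed, the semisimple structure \eqref{trivial-cohft} makes the ancestor potential of $I^{N,\Delta(\t)}$ factorize across the $N$ canonical components,
\ben
\A\big(I^{N,\Delta(\t)},\hbar,a(z)\big)=\prod_{i=1}^{N}\A^{\rm pt}\big(\hbar\Delta_i(\t),{}^{i}a(z)\sqrt{\Delta_i(\t)}\big),\qquad a(z)\in H[[z]].
\een
A direct expansion of \eqref{eq:trans} together with the multinomial theorem shows that $T_{c(z)}$ corresponds on the potential side to the argument shift $a(z)\mapsto a(z)-c(z)$, so that $\A(T_{c(z)}\circ I^{N,\Delta(\t)},\hbar,\widetilde{\q}(z))=\A(I^{N,\Delta(\t)},\hbar,\widetilde{\q}(z)-c(z))$, where $\widetilde{\q}(z)-c(z)=\q(z)+R(\t)^{-1}\one z$. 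The upper-triangular twist $\widehat{R}(\t)$ is defined at the CohFT level through the Feynman graph sum \eqref{do:CohFT}, and identity \eqref{feynman} ensures that its effect on the potential coincides with the Fock-space quantization $\widehat{R(\t)}$ from \eqref{R:fock}. Finally, $\widehat{\Psi}(\t)$ reduces to the linear coordinate change $\q(z)\mapsto\Psi(\t)^{-1}\q(z)$, matching the outermost operator in \eqref{formal-potential}.

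The remaining step, which is the main obstacle, is to identify the shifted evaluation at $\q(z)+R(\t)^{-1}\one z$ with the effect of the lower-triangular Fock operator $e^{(U(\t)/z)\sphat}$ applied to $\prod_i\A^{\rm pt}(\hbar\Delta_i(\t),{}^{i}\widetilde{\q}(z)\sqrt{\Delta_i(\t)})$. This identification rests on the characterization of $R(\t)$ via the flat-connection asymptotics $\Psi(\t)R(\t)e^{U(\t)/z}$ recalled just before Theorem~\ref{hg-rec}: since $R(\t)e^{U(\t)/z}$ is the full asymptotic solution to the frame equations, the translation of arguments by $R(\t)^{-1}\one z$ absorbs exactly the contribution of the $e^{U(\t)/z}$ factor when pushed through $\widehat{R}(\t)$ in the Fock-space picture. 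Once this compatibility is established, collecting the four factors reproduces the right-hand side of \eqref{formal-potential}, completing the proof.
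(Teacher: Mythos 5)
Your overall framework---evaluating the potential functional \eqref{cohft-potential} on Teleman's formula and peeling off $\widehat{\Psi}(\t)$, $\widehat{R}(\t)$ and the translations one at a time, using the compatibility of the graph sum \eqref{do:CohFT} with \eqref{feynman}, the factorization of $\A(I^{N,\Delta(\t)},\hbar,\cdot)$ into point potentials, and the identification of $T_{a(z)}$ with the argument shift $a\mapsto a-c$---is exactly the intended route, and those intermediate verifications are correct. But the step you yourself flag as ``the main obstacle'' is a genuine gap, and the mechanism you propose for it is wrong. If you do the bookkeeping carefully, the argument reaching the translated seed is $R(\t)^{-1}\Psi(\t)^{-1}\widetilde{\q}(z)$, not $\widetilde{\q}(z)$ (your shift ``$\widetilde{\q}(z)-c(z)$'' skips the two coordinate changes in between), and with a consistent sign convention the conjugation actually yields $c(z)=R(\t)^{-1}\one z-\one z$: with the conventions $\A(T_v\circ\La,\hbar,b)=\A(\La,\hbar,b-v)$ from \eqref{eq:trans} and $\widehat{E}\circ\La = E\circ(e^{W_E}\circ\La)$ one computes $T_{a}\circ\widehat{E}\circ T_{a}^{-1}=\widehat{E}\circ T_{E^{-1}a-a}$, opposite in sign to \eqref{trans:twist} as printed, so do not take that formula on faith. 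With these corrections the translation by $R(\t)^{-1}\one z$ combines with the dilaton shifts to give exactly the argument $R(\t)^{-1}\Psi(\t)^{-1}\q(z)+\one z$, i.e.\ it \emph{fully restores the dilaton shift} ${}^{i}\widetilde{\q}(z)\sqrt{\Delta_i(\t)}$ inside the point potentials---there is nothing left over to ``absorb'' the factor $e^{(U(\t)/z)\sphat}$.

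That factor must instead be disposed of by a separate, independent ingredient which your proposal never supplies: the string equation ($L_{-1}$-constraint) for the Kontsevich--Witten tau-function, which implies that the quantized lower-triangular operator $e^{(u_i(\t)/z)\sphat}$ fixes each factor $\A^{\rm pt}\big(\hbar\Delta_i(\t),{}^{i}\widetilde{\q}(z)\sqrt{\Delta_i(\t)}\big)$, so that $e^{(U(\t)/z)\sphat}$ acts as the identity on the product in \eqref{formal-potential}. This is unavoidable: Teleman's formula in Theorem \ref{hg-rec} contains no $U(\t)$ at all, while \eqref{formal-potential} does, so the two sides cannot agree unless that operator acts trivially, and no amount of information about $R(\t)$ being determined by the asymptotic solution $\Psi(\t)R(\t)e^{U(\t)/z}$ of \eqref{frob_eq1}--\eqref{frob_eq2} yields this---that characterization fixes $R(\t)$ but says nothing about the Fock-space action of $e^{(U(\t)/z)\sphat}$ on point potentials. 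The paper treats the corollary as immediate from Theorem \ref{hg-rec}, the dilaton-shift remark, and the stated compatibility of \eqref{do:CohFT} with \eqref{feynman}, with the string-equation fact (standard since Givental's work, and the reason his formula is equivalent to Teleman's $U$-free one) left implicit; your write-up makes that implicit step explicit as a hypothesis but then justifies it with an incorrect mechanism, so as it stands the proof is incomplete precisely at its crux.
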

Finally, let us point out that if $\La^X$ is the CohFT induced from the Gromov-Witten theory of $X$; then
\beq\label{potential=}
\A^{X}(\hbar,\q(z)):=\A(\La^{X},\hbar,\widetilde{\q}(z)),
\eeq
coincides with the so called {\em total ancestor potential} of $X$. In particular, $\A^{\rm pt}$ in formula
\eqref{formal-potential} is the total ancestor potential of a point.

\section{Global Frobenius manifolds for simple elliptic singularities}\label{sec:3}

In this section, we review the construction of global Frobenius manifolds for simple elliptic singularities. We follow \cite{MR}.

\subsection{Saito theory}
K. Saito's theory of primitive forms \cite{S} yields a certain flat structure on the space of miniversal deformations of a singularity, which is known
to be a Frobenius structure cf. \cite{He,SaT}. We refer to it as the Saito's Frobenius manifold structure. Let us recall the general set up.

Recall (see \cite{AGV}) the action of the group of germs of holomorphic changes of the coordinates $(\C^N,0)\to (\C^N,0)$ on the space of all germs at $0$ of holomorphic functions. Let ${\bf x}=(x_1,\cdots,x_N)\in\C^N$. Given a holomorphic germ $f({\bf x})$ with an isolated critical point at ${\bf x}=0$, we say that the family of functions $F(\s,{\bf x})$ is a {\em miniversal deformation} of $f$ if it is transversal to the orbit of $f$. One way to construct a miniversal deformation is to choose a $\C$-linear basis $\{\phi_i({\bf x})\}_{i=0}^{\mu-1}$ in the Jacobi algebra $\O_{\C^N,0}/\langle \d_{x_1}f,\cdots,\d_{x_{N}}f\rangle$. Here $\mu$ is the rank of the Jacobi algebra as a vector space, also known as the {\em Milnor number} or the {\em multiplicity} of the critical point. Then the following family provides a miniversal deformation:
\beq\label{eq:miniversal}
F(\s,{\bf x})=f({\bf x})+\sum_{i=0}^{\mu-1} s_i\phi_i({\bf x}),\quad \s=(s_0,s_1,\dots,s_{\mu-1})\in \S,
\eeq
where $\S\subset \C^\mu$ is a small ball around $0\in \C^\mu$. The domain of the function $F(s,\, )$ is chosen uniformly for $s\in \S$ to be a certain open
naighbourhood of $0\in \C^N$, such that its boundary satisfies certain transversality conditions (see \cite{AGV}). Slightly abusing the notation we write $\C^N$,
but we really mean an appropriate chosen open neighborhood of $0.$ This should not cause any confusion. Moreover, we will apply Saito's theory only to singularities
for which this special domain does coincide with $\C^N$.

Put $X=\S\times \C^N$ and let $C\subset X$ be the critical set of $F$, that is the support of the sheaf
\ben
\O_C:=\O_X/\langle \d_{x_0} F,\cdots,\d_{x_{N-1}}F\rangle.
\een
We have the following maps:
\ben
\begin{CD}
 \S\times \C^N & & \\
@V{\varphi}VV\searrow &  \\
\S\times \C &@>>{p}> & \S
\end{CD}
\qquad
\begin{tabular}{rl}
&$\varphi(\s,{\bf x})= (\s,F(\s,{\bf x}))$,\\
&\\
&$p(\s,\gl)=\s,$
\end{tabular}
\een
The map $\d/\d s_i\mapsto \d F/\d s_i$ induces an isomorphism between the sheaf $\T_\S$ of holomorphic vector fields on $\S$ and $q_*\O_{C}$, where $q=p\circ\varphi.$
In particular, for any $\s\in\S,$ the tangent space $T_{\s}\S$ is equipped with an associative commutative multiplication $\bullet_{\s}$ depending holomorphically on $\bf{s}\in \S$. In addition, if we have a volume form $\omega=g(\s,{\bf x})d^N{\bf x},$ where $d^N{\bf x}=dx_1\wedge\cdots\wedge dx_{N}$ is the standard volume form; then $q_*\O_C$ (hence $\T_\S$ as well) is equipped with the {\em residue pairing}:
\beq\label{res:pairing}
\LD\psi_1,\psi_2\RD= \frac{1}{(2\pi i)^N} \ \int_{\Gamma_\ge} \frac{\psi_1({\bf s,y})\psi_2({\bf s,y})}{F_{y_1}\cdots F_{y_N}}\, \omega,
\eeq
where ${\bf y}=(y_1,\cdots,y_N)$ are unimodular coordinates for the volume form, i.e., $\omega=d^N{\bf y}$, and $\Gamma_\ge$ is a real $N$-dimensional cycle supported on $|F_{x_i}|=\ge$ for $1\leq i\leq N.$

Given a holomorphic function $f$ on $\C^N$ and a real number $m$ we define
\ben
{\rm Re}^m_{f}(\C^N) := \Big\{x\in \C^N\ :\ {\rm Re}(f(x))\leq m\Big\}.
\een
Let
\beq\label{osc_integral}
J_{\A}(\s,z) = (-2\pi z)^{-N/2} \, zd_\S \, \int_{\A} e^{F({\bf s,x})/z}\omega,
\eeq
where $d_\S$ is the de Rham differential on $\S$ and $\A$ is a semi-infinite cycle from
\beq\label{cycle}
\lim_{ \longleftarrow } H_N(\C^N,{\rm Re}_{F(\s,\cdot)/z}^{-m}(\C^N);\C)\iso \C^\mu.
\eeq
By definition, the oscillatory integrals $J_\A$ are sections of the cotangent sheaf $\T_\S^*$.
According to Saito's theory of primitive forms \cite{S}, there exists a volume form $\omega$ such that the residue pairing is flat and the oscillatory integrals satisfy a system of differential equations, which in flat-homogeneous coordinates ${\bf t}=(t_0,\dots,t_{\mu-1})$ have the form
\beq\label{frob_eq1}
z\d_i J_\A({\bf t},z) = \d_i \bullet_{\bf t} J_\A({\bf t},z),
\eeq
where $ \d_i:=\d/\d t_i\ (0\leq i\leq\mu-1)$ and the multiplication is defined by identifying vectors and covectors via the residue pairing. Due to homogeneity the integrals satisfy a differential equation with respect to the parameter $z\in \C^*$:
\beq
\label{frob_eq2}
(z\d_z +\E)J_{\A}({\bf t},z) =  \Theta\, J_\A({\bf t},z),
\eeq
where
\ben
\E=\sum_{i=0}^{\mu-1}d_it_i \d_i,\quad (d_i:={\rm deg}\, t_i={\rm deg}\, s_i),
\een
is the {\em Euler vector field} and $\Theta$ is the so-called {\em Hodge grading operator }. The latter is defined by
\ben
\Theta:\T^*_S\rightarrow \T^*_S,\quad \Theta(dt_i)=\Big(1-\frac{D}{2}-d_i\Big)dt_i,
\een
where $D$ is the so called {\em conformal dimension} of the Frobenius manifold, uniquely determined by the symmetry of the degree spectrum: the numbers $d_i$ are
symmetric with respect to the point $1-D/2$.
The compatibility of the system \eqref{frob_eq1}--\eqref{frob_eq2} implies that the residue pairing, the multiplication, and the Euler vector field give rise to a {\em conformal Frobenius structure} of   conformal  dimension $D$. We refer to \cite{D,M} for the definition and more details on Frobenius structures.
\begin{theorem}
[\cite{He, SaT}] Let $f$ be an isolated singularity, a primitive form $\omega$ induces a germ of Frobenius manifold structures $(T_{\s}\S, \LD,\RD, \bullet_{\s}, \E, \d_0)$ with an Euler vector $\E$ and a flat identity $\d_0$ for any $\s\in\S$. It is homogeneous and generically semisimple.
\end{theorem}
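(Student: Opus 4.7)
The plan is to extract the Frobenius manifold axioms directly from the integrability of the system \eqref{frob_eq1}--\eqref{frob_eq2} satisfied by the oscillatory integrals $J_\A(\t,z)$. The key input is the existence of a primitive form $\omega$, which by its very definition is tailored so that the residue pairing \eqref{res:pairing} becomes flat on $\S$. Once this is granted, the first concrete step is to choose flat coordinates $\t=(t_0,\dots,t_{\mu-1})$ for $\langle\cdot,\cdot\rangle$; in these coordinates the commutativity, associativity, and Frobenius compatibility of the multiplication $\bullet_\t$ are inherited directly from the Jacobi algebra and the residue pairing.

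The main analytical step is to interpret \eqref{frob_eq1} as the flatness for every $z\in\C^*$ of the Dubrovin connection $\nabla^z=\nabla+z^{-1}(\,\cdot\bullet_\t)$, where $\nabla$ is the Levi-Civita connection of $\langle\cdot,\cdot\rangle$. Expanding the curvature of $\nabla^z$ in powers of $z$ and reading off each order, the $z^{-2}$ term gives associativity of $\bullet_\t$, the $z^{-1}$ term gives the potentiality condition $\nabla_i c_{jk\ell}=\nabla_j c_{ik\ell}$ for the structure constants $c_{ijk}=\langle\partial_i\bullet_\t\partial_j,\partial_k\rangle$, and the $z^0$ term is the flatness of $\nabla$. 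Combined with the total symmetry of $c_{ijk}$, a Poincar\'e-lemma argument produces a local potential $\F(\t)$ with $c_{ijk}=\partial_i\partial_j\partial_k\F$. The flat identity $\partial_0$ is supplied by the deformation parameter $s_0$ attached to $\phi_0=1$, whose derivative of $F$ represents the unit of the Jacobi algebra at every $\s$.

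Homogeneity of conformal dimension $D$ follows from \eqref{frob_eq2}: the Euler vector field $\E$ built from the weights $d_i=\deg t_i$ intertwines the Dubrovin connection with the Hodge grading operator $\Theta$, which is exactly the infinitesimal form of the conformal Frobenius axiom; the symmetry of the spectrum about $1-D/2$ is forced by self-duality of the residue pairing. Generic semisimplicity follows from a standard transversality argument: on the Zariski-open set $\S^{\mathrm{ss}}\subset\S$ where $F(\s,\cdot)$ is Morse with pairwise distinct critical values, the Jacobi algebra decomposes as a product of $\mu$ copies of $\C$ indexed by the critical points, yielding canonical coordinates $u_i(\t)$ given by the critical values themselves. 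Nonemptiness of $\S^{\mathrm{ss}}$ is clear because a generic perturbation of $f$ by $\sum s_i\phi_i({\bf x})$ is Morse.

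The main obstacle in this proof is the construction of the primitive form $\omega$; this is the deepest part of K.\ Saito's theory, and its existence in full generality for isolated hypersurface singularities was settled by M.\ Saito via mixed Hodge-theoretic methods. Once $\omega$ is in hand, all remaining verifications are bookkeeping of the $z$-expansion in \eqref{frob_eq1}--\eqref{frob_eq2}, treated in detail in \cite{He,SaT}; accordingly, the proof I would actually write is essentially an invocation of those references, together with an explicit identification of each Frobenius-manifold axiom with a specific order in $z$ of the integrability equations.
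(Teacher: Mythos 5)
Your proposal is correct and takes essentially the same route as the paper, which likewise reads the Frobenius axioms off the compatibility of the system \eqref{frob_eq1}--\eqref{frob_eq2} and defers the existence of the primitive form and the detailed verifications to \cite{He, SaT}. Your added bookkeeping --- the $z$-expansion of the Dubrovin connection pencil, the flat identity from $\phi_0=1$, homogeneity via the Euler field and the operator $\Theta$, and generic semisimplicity from Morsification with pairwise distinct critical values as canonical coordinates --- is the standard fleshing-out of exactly that citation.
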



\subsection{Global Frobenius manifold structures for simple elliptic singularities.}
Simple elliptic singularities are classified by K.Saito (cf. \cite{Sa2}) into three different types, $\widetilde{E}_6, \widetilde{E}_7,\widetilde{E}_8$. In this paper, we consider three families of simple elliptic singularities by choosing a particular normal form in each family, see \eqref{polynomial} below. The differential equations for the primitive forms will be the same under our choice, see \eqref{eq:Picard-Fuchs}. Besides, all the possible elliptic orbifolds $\mathbb{P}^1$ with three singular points can be seen as mirrors of our families at infinity of the complex plane, referred to as {large complex structure limit point}. The method also works for other normal forms although the mirrors of those elliptic orbifolds may appear in singular points on the complex plane other than the large complex structure limit points. For the framework of {\em global mirror symmetry}, see \cite{CR2}. Such global mirror symmetry phenomena for simple elliptic singularities are studied in \cite{MS}. However, our choices here are enough for describing the quasi-modularity properties of CohFTs for those elliptic orbifolds. Only modular subgroups will be different for different normal forms.  Let $W$ be one of the following three polynomials
\beq\label{polynomial}
\widetilde{E}_6:= x_1^3+x_2^3+x_3^3, \quad \widetilde{E}_7: =x_1^2x_3+x_1x_2^3+x_3^2, \quad
\widetilde{E}_8: =x_1^3x_3+x_2^3+x_3^2.
\eeq
Let us analyze the case $W=\widetilde{E}_6$. The other cases are similar.  We define a 1-dimensional family by
$$W_{\si}=W+\sigma x_1x_2x_3.$$
Note that $W_{\si}$ has an isolated singularity of the same rank $\mu$ iff $\si\in\Sigma$,
\beqa
\Sigma=\Big\{\si\in\mathbb{C}\Big\vert\si^3+27\neq0\Big\}.
\eeqa
Now we can replace $f$ in section 3.1 by $W_{\sigma}$. Its miniversal deformation is
\begin{equation}
F:=W_{\sigma}(\textbf{s},\textbf{x})=W_{\si}+\sum_{i=0}^{\mu-1}s_i\phi_i.
\end{equation}
Here $\{\phi_i\}_{i=0}^{\mu-1}$ is a basis of homogeneous polynomials of the Milnor ring
$\mathscr{Q}_{W_{\si}}$. We always set $\phi_{\mu-1}=x_1x_2x_3, \phi_0=1$ and identify the index $\mu-1$ by $-1$. Thus $\phi_{-1}=x_1x_2x_3$ and $s_{-1}=s_{\mu-1}$.

\subsubsection{Primitive forms and global moduli of Frobenius manifolds}\label{sec:3.2}
Recall that for a generic $(\s,\la)$, the fiber
$$X_{\s,\la}=\big\{\x\in\C^N\big\vert \varphi(\s,\x)=\la\big\}$$
is homotopic to $\mu$ copies of $N-1$ dimensional sphere. The non-generic $(\s,\la)$ are the ones for which $X_{\s,\la}$ has a singularity. They form
an analytic hypersurface called {\em discirminant}. The complement of the latter is a base for the middle cohomology bundle formed by the
middle cohomology groups $H^{N-1}(X_{\s,\la};\C)$. In addition the integral structure in cohomology induces a flat Gauss-Manin connection.

Let us denote by $E_{\sigma}$ be the curve defined by $W_{\sigma}$,
$$E_{\sigma}:=\big\{[x_1,x_2,x_3]\in\C P^2\big\vert W_{\sigma}(x_1,x_2,x_3)=0\big\}.$$
One may compactify the family $X\to\S\times \C$ to $\overline{X}\to\S\times \C$ so that $E_{\sigma}=\overline{X}-X$ is the boundary. $E_{\sigma}$ is also known as the {\em elliptic curve at infinity}, cf.\cite{L}.
According to K. Saito (see \cite{S}), the primitive forms for simple elliptic singularity $W_{\sigma}$ are homogenous of degree 0 and can be expressed as
$$\omega=\frac{d^3{\bf x}}{\pi_{A}(\sigma)}.$$
They are parametrized by the periods of $E_\sigma$,
\beq\label{period}
\pi_A(\si):= 2\pi i \int_{A_\sigma} {\rm Res}_{E_\si}[d^3\x/dF]\ ,
\eeq
where we fix a reference point $\sigma_{0}\in\Sigma$, $A\in H_1(E_{\si_0},\C)$ is some fixed non-zero 1-cycle and $A_\si$ is a flat family of cycles uniquely determined by $A$ for all $\si$ in a small neighborhood of $\si_0$. $d^3\x/dF$ is a holomorphic 2-form on $X_{\si,\la}$ and ${\rm Res}$ is the residue along $E_{\sigma}$. The boundary of any tubular neighborhood of $E_{\si}$ in $\overline{X}_{\s,\la}$ is a circle bundle over $E_{\si}$ that induces via pullback an injective {\em tube map} $L:H_1(E_{\sigma})\to H_2(X_{\s,\la})$. Let $\alpha=L(A);$ then we have
\beq\label{lerey_period}
\pi_A(\si) = \int_{\alpha}\frac{d^3\x}{dF}.
\eeq
We refer to $\alpha$ as a {\em tube} or {\em toroidal} cycle. The space of
all toroidal cycles coincides with the kernel of the intersection
pairing on  $H_2(X_{\s,\la};\C)$.

The space of all periods $\pi_A(\si)$ coincides with the space of solutions of the following differential equation (see the Appendix in \cite{MR}),
\beq\label{eq:Picard-Fuchs}
\frac{d^2}{d\si^2} + \frac{3\si^2}{\si^3+27}\ \frac{d}{d\si} +  \frac{\si}{\si^3+27}= 0.
\eeq
Take $\la=-\si^3/27$, equation \eqref{eq:Picard-Fuchs} is just a Gauss hypergeometric equation,
\beq\label{eq:Gauss}
\la(1-\la)\frac{d^2}{d\la^2}+(\frac{2}{3}-\frac{5\la}{3})\frac{d}{d\la}-\frac{1}{9}=0
\eeq

Now let us describe the global Frobenius manifold structure for those normal forms.
We fix a symplectic basis $\{A',B'\}$ of $H_1(E_{\si_0};\Z)$ once and for all. Then the primitive form is a multi-valued function on $\Sigma$. Thus it is
more natural to replace $\Sigma$ by its universal cover. The latter is
naturally identified with the upper half-plane $\mathbb{H}.$ The points in the universal
cover $\widetilde{\Sigma}$ of $\Sigma$ are pairs consisting of a point
$\sigma\in \Si$ and a homotopy class of paths $l(s)\in\Si$ with $l(0)=\si_0, l(1)=\sigma.$ The map
$$(\si,l(s))\mapsto \tau=\frac{\pi_{B'}(\si)}{\pi_{A'}(\si)},$$
where the periods $\pi_{B'}$ and $\pi_{A'}$ are analytically continued along the path $l(s)$, defines an analytic isomorphism between the universal cover of $\widetilde{\Si}$ and the upper half-plane $\mathbb{H}$.

Let $\M=\mathbb{H}\times\C^{\mu-1}$.
A global Frobenius structure exists on $\M$ for any non-zero cycle
\beq\label{fstr:cycle}
A=dA'+cB'\in H_1(E_{\si_0};\C),\quad -d/c\notin\mathbb{H}.
\eeq
Now let us describe the choice of a coordinate on $\HH$, which we use through out the paper.
Let $M$ be the classical monodromy operator on the middle homology bundle. By definition, $M$ is the linear operator induced by the parallel transport with respect to the Gauss-Manin connection along a loop in $\C^*\equiv \{\si_0\}\times (\C\backslash\{0\})$ based at $\lambda=1$. The operator $M$ is diagonalizable and one can find an eigenbasis $\{\alpha_i\}_{i=-1}^{\mu-2}, \alpha_{i}\in H^*(X_{\sigma,1},\C)$, s.t., the eigenvalue of $\alpha_i$ is $e^{2\pi i d_i}$. Here
$$(\si,1):=(\si,0,\cdots,0,1)\in\S\times\C.$$

In particular, the invariant subspace of $M$ is spanned by $\alpha_{-1}$ and $\alpha_{0}$.  Put $\alpha_0=-(-2\pi)^{3/2}L(A)$ and $\alpha_{-1}=-(-2\pi)^{3/2}L(B)$, where the cycle $B=bA'+aB'$ is chosen to be any cycle linearly independent from $A.$ Then it was proved in \cite{MR} that the function
\beq\label{flat-deg0}
t:=\frac{\pi_{B}(\si)}{\pi_{A}(\si)} = \frac{a\tau+b}{c\tau+d}
\eeq
is a flat coordinate. Slightly abusing the notation we simply write $t\in \HH$ instead of saying that $t$ is given by formula
\eqref{flat-deg0}  for some $\tau\in \HH$.
The entire flat coordinate system can be described in a similar way (see Section 2.2.2 in \cite{MR}).
Hence, $\M$ is a moduli space of global Frobenius manifold structures. For convenience, we denote the flat coordinates by $\t=(t,\t_{\geq0})\in\M$, with $$\t_{\geq0}=(t_0,\cdots,t_{\mu-2})\in\C^{\mu-1}.$$

\subsection{The action of the monodromy group on flat coordinates}\label{sec:3.3}
The monodromy group $\Gamma$ acts on $\M$ by covering
transformations. In this subsection, we recall
its action on flat coordinates.

Let $\nu$ be a \emph{monodromy transformation} in the vanishing homology along a given loop $C$ in $\Sigma$ based at $\sigma_0$. According to \cite{MR}, the $\nu$ action on $\{\ga_i\}_{i=-1}^{\mu-2}$ has a matrix form with respect to the vector of basis $(\alpha_{-1},\cdots,\alpha_{\mu-2})^T$,
\beqa\label{monodromy_matrix}
g\oplus{\rm Diag}(e^{2\pi i d_1k},\dots,e^{2\pi i d_{\mu-2}k}) \in {\rm SL}(2;\C)\times \Z^{\mu-2},
\eeqa
where
\beqa\label{nu}
g(\ga_{-1})=n_{11}\ga_{-1}+n_{12}\ga_0,\quad\mbox{and}\quad
g(\ga_{0})=n_{21}\ga_{-1}+n_{22}\ga_0,
\eeqa
and the matrix $(n_{ij})\in{\rm SL}(2;\C)$.

From now on we fix a flat coordinate system $t_a=t_a(\s)$ $(-1\leq
a\leq \mu-2)$, multi-valued on $\S$ and holomorphic on the cover $\M$, and denote by $H$ the space of flat
vector fields on $\M$. We further assume that the flat coordinates are
chosen in such a way that the residue pairing assumes the form:
\ben
(\d_i,\d_j)=\delta_{i,j'},\quad -1\leq i,j\leq \mu-2,
\een
where $\d_a:=\d/\d t_a$ and $'$ is the involution defined by
$$-1\mapsto 0, \quad 0\mapsto -1,\quad
i\mapsto \mu-1-i, \quad 1\leq i\leq \mu-2.$$

According to \cite{MR} the flat coordinates can be expressed via certain period
integrals as rational functions on the vanishing homology. It follows
that the monodromy group $\Gamma$ acts on the flat coordinates as well
and that this action coincides with the analytic continuation along $C$.
According to \cite{MR} if the flat coordinate system is such that the
residue pairing has the above form; then the monodromy transformation (or
equivalently the analytic continuation) of the flat coordinates
has the following form. Put
\beq
j_{\nu}(t):=j(g,t):=n_{21}t+n_{22};
\eeq
then
\beq
\nu(\t)_{-1}=g(t):=\frac{n_{11}t+n_{12}}{n_{21}t+n_{22}}
\eeq
and
\beq\label{eq:modular}
\nu(\textbf{t})_{0}= t_0+\frac{n_{12}}{2j_{\nu}(t)}\sum_{i=1}^{\mu-2} t_it_{i'},\ \nu(\textbf{t})_{i}=\frac{e^{2\pi i d_ik}}{j_{\nu}(t)}\ t_i,\ 1\leq i\leq \mu-2.
\eeq

\section{Global B-model CohFT  and anti-holomorphic completion}\label{sec:4}

The core of our paper is global B-model CohFTs for simple elliptic singularities, which we will
construct in this section. The basic idea is that the global higher
genus B-model theory of \cite{MR} can be enhanced to a global B-model
CohFT using the construction of Teleman (see section two). The
modularity will follow essentially from the monodromy calculations in
\cite{MR}.

\subsection{Global B-model CohFT}

\subsubsection{Givental's  semisimple quantization operator}

Suppose that $W$ is one of the three families of simple elliptic
singularities under consideration. Recall the global Frobenius
manifold structures on $\M.$ First we
recall the definiton of Givental's quantization operator and then
we use it to define a CohFT  $\La^W(\t)$ over the semisimple loci $\M_{ss}.$

Let $\mathcal{K}\subset \M$ be the set of points $\t$ such
that $u_i(\s(\t))=u_j(\s(\t))$ for some $i\neq j$. We call this set the {\em
  caustic} and put $\M_{ss}$ for its complement. Note that the points
$\t\in \M_{ss}$ are semisimple, i.e., the critical values $u_i(\s(\t))$ ($1\leq i\leq \mu$) form a coordinate system locally near $\t$.
Let $\t\in \M_{ss}$; then we have an isomorphism
\beqa
\Psi(\t): \C^\mu\to T_{\t}\M,\quad e_i\mapsto \sqrt{\Delta_i(\s(\t))}\,\frac{\d}{\d u_i(\s(\t))},
\eeqa
where $\Delta_i(\s(\t))$ is defined by
\beqa
\Big(\frac{\d}{\d u_i(\s(\t))},\frac{\d}{\d u_j(\s(\t))}\Big)=\frac{\delta_{ij}}{\Delta_i(\s(\t))},
\eeqa
and we identify $T_\t\M$ with $H$ via the flat metric, i.e.,
\ben
\frac{\d}{\d u_i}=\sum_{j=0}^{\mu-1} \, \frac{\d t_j}{\d u_i} \, \d_j, \quad 1\leq i\leq\mu.
\een
$\Psi_{\t}$ diagonalizes the Frobenius multiplication and the residue pairing:
\beqa
e_i\bullet e_j =\delta_{i,j}\sqrt{\Delta_i(\s(\t))}\ e_i,\quad (e_i,e_j)=\delta_{ij}.
\eeqa

The system of differential equations (\ref{frob_eq1}) and (\ref{frob_eq2}) admits a unique formal solution of the type
\ben
\Psi(\t) R(\t)\,e^{U(\t)/z},\quad R(\t)={\rm Id}+\sum_{k=1}^{\infty}R_{k}(\t)z^{k}\in {\rm End}(\C^\mu)[[z]].
\een
where $U(\t)$ is a diagonal matrix with entries $u_1(\s(\t)),\dots,u_\mu(\s(\t))$ on the diagonal, cf.\cite{D,G}.

\subsubsection{Global B-model CohFT}

Givental used $R(\t)$ to define a higher genus generating function over $\M_{ss}$. We would like to enhance his definition to CohFT. The main difficulty is to extend our definition to non-semisimple points in $\mathcal{K}.$

For any semisimple point $\t\in\M_{ss}$, we define a CohFT with a flat identity and a state
space $H$ (see  Sect. \ref{sec:2})
\beq\label{eq:trans=quant}
\La^{W}(\t):=\Psi(\t) \circ T_{z}\circ\widehat{R}(\t)\circ T_{z}^{-1}\circ I^{\mu,\Delta(\t)}.
\eeq

We are interested in the loci of points $\t=(t,0)\in\HH\times\C^{\mu-1}$, which are never
semisimple. To continue
our B-model discussion, we need to prove that $\La^W(\t)$ extends
holomorphically for all $\t\in \M.$
To begin with, let us fix $g$, $n$, and $\gamma_i \in H$; for convenience, we denote by
$$\La_{g,n}^{W}(\t):=(\La^W(\t))_{g,n}.$$
$\La^W_{g,n}(\t)(\gamma_1,\dots,\gamma_n)$ is a linear
combination of cohomology classes on $\overline{\M}_{g,n}$ whose
coefficients are functions on $\M$.
\begin{lemma}\label{finite-order-poles}
The coefficients of $\La^W_{g,n}(\t)(\gamma_1,\dots,\gamma_n)$ are
meromorphic functions on $\M$ with at most finite order poles along
the caustic $\mathcal{K}$.
\end{lemma}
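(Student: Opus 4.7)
The plan is to unpack the Feynman-graph expression for the cycle-valued quantization. Applying formulas \eqref{Quant-E} and \eqref{do:CohFT} to \eqref{eq:trans=quant} gives a representation of $\La^{W}_{g,n}(\t)(\gamma_1,\dots,\gamma_n)$ as a sum over decorated genus-$g$ graphs with $n$ tails, in which each graph contributes a finite product of three kinds of factors: the entries of $\Psi(\t)$ coming from the outer change of basis, the propagators $V_e$ built from the entries of $R_k(\t)$ via \eqref{V:fock}, and the vertex contributions from the trivial CohFT $I^{\mu,\Delta(\t)}$ modified by the translations $T_z$ and $T_z^{-1}$.

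First I would argue that only finitely many terms contribute to the coefficient of any given cohomology class on $\overline{\M}_{g,n}$. Flag labels $z^{k(e)}$ become $\psi^{k(e)}$-classes via \eqref{z-psi}, so the total $\psi$-degree is capped by $\dim\overline{\M}_{g,n}=3g-3+n$; and at each vertex the trivial CohFT $I^{\mu,\Delta(\t)}$ forces all insertions to share one canonical index, bounding the sum over canonical indices by $\mu$ per vertex. Combined with the finiteness of stable decorated graphs of fixed topological type and bounded flag degrees, this reduces the task to estimating finitely many finite products.

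Next I would verify that each building block extends to a meromorphic function on $\M$ with at most finite-order poles along the caustic $\mathcal{K}$. The factors $\Delta_i(\t)^{\pm 1/2}$ and the entries of $\Psi(\t)$ are algebraic expressions in the canonical coordinates $u_i(\t)$ and their derivatives with respect to flat coordinates; since the Frobenius structure itself extends holomorphically to all of $\M$, the only singularities are those introduced by the passage to canonical coordinates, which are algebraic and hence of finite order. Branch choices of $\sqrt{\Delta_i(\t)}$ cancel between $\Psi(\t)$ and the vertex factors in the standard way, so the final expression is single-valued. The propagators $V_e$ are polynomial in the entries of $R_k(\t)$. Since a finite sum of finite products of meromorphic functions with finite-order poles is again meromorphic with a finite-order pole, the lemma will follow once the behaviour of $R_k(\t)$ is under control.

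The hard part is establishing that behaviour. The defining system \eqref{frob_eq1}--\eqref{frob_eq2} determines $R(\t)$ as an asymptotic series only on the semisimple locus $\M_{ss}$, and one must show that the coefficients $R_k(\t)$ extend meromorphically across $\mathcal{K}$ with poles of finite order. I would invoke the global analysis carried out in \cite{MR}, where the higher-genus B-model potential for simple elliptic singularities is constructed and $R(\t)$ is controlled in terms of the stationary-phase asymptotics of the oscillatory integrals \eqref{osc_integral} along the cycles \eqref{cycle}; this provides precisely the meromorphic control needed to complete the argument.
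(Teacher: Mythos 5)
Your proposal follows the paper's proof in its essentials: the paper likewise unpacks \eqref{eq:trans=quant} through the graph sum \eqref{do:CohFT}, uses the per-vertex dimension bound $\sum_{e\in E_v(\Gamma)}k(e)\leq 3g_v-3+r_v+n_v$ (which, summed over vertices using $\sum_v r_v=2\,{\rm Card}(E(\Gamma))$, bounds the number of edges by $3g-3+n$) to conclude that the class is a rational function in the entries of only finitely many $R_k$, and then invokes the fact that each $R_k$ has a finite-order pole along the caustic. Note that your per-graph cap should really be imposed vertexwise on $\overline{\M}_{g_v,r_v+n_v}$ rather than globally on $\overline{\M}_{g,n}$, though the conclusion is the same. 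On your ``hard part'' you overestimate the difficulty: no stationary-phase analysis from \cite{MR} is needed, since the system \eqref{frob_eq1}--\eqref{frob_eq2} determines the entries of $R_{k+1}$ recursively from those of $R_k$, with divisions only by $\sqrt{\Delta_i(\t)}$ and by differences of canonical coordinates, which vanish to finite order along $\mathcal{K}$; the paper simply cites this uniqueness (see \cite{G2}) and asserts the finite-order pole property of each $R_k$.

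The one genuine gap is your treatment of single-valuedness, to which the paper devotes the first half of its proof. The issue is not only the branch of $\sqrt{\Delta_i(\t)}$: analytic continuation along a closed loop in $\M_{ss}$ encircling the caustic permutes the canonical coordinates $u_i(\t)$, and correspondingly permutes the data $\Psi(\t)$, $\Delta(\t)$, $U(\t)$, $R(\t)$, so a priori every building block in your graph sum is multivalued on $\M_{ss}$, and the statement ``meromorphic on $\M$'' does not yet make sense. The paper resolves this by observing that the expression \eqref{eq:trans=quant} is invariant under reordering the canonical coordinates, hence single-valued on $\M_{ss}$; your phrase that branch choices ``cancel in the standard way'' addresses at most the sign ambiguity of $\sqrt{\Delta_i(\t)}$ and must be replaced by this permutation-invariance argument before the finite-order-pole estimate can be applied.
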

\proof
By definition, the CohFT \eqref{eq:trans=quant} depends only on the
choice of a canonical coordinate system $u(\t):=(u_1(\s(\t),\dots,u_\mu(\s(\t))$.
The latter is uniquely determined up to permutation. Note that \eqref{eq:trans=quant}
is permutation-invariant, i.e., it does not matter how we order the canonical
coordinates. On the other hand, up to a permutation $u(\t)$  is invariant under the analytical
continuation along a closed loop in $\M_{ss}.$ It follows that
$\La^W_{g,n}(\t)$ is a single valued function on $\M_{ss}.$

We need only to prove that the poles along $\mathcal{K}$ have finite order.
Note that according to the definition of the class \eqref{do:CohFT}
only finitely many graphs $\Gamma$ contribute. The reason for this is
that in order to have a non-zero contribution, we must have
\beqa
\sum_{e\in E_v(\Gamma)} \, k(e) \ \leq \ 3g_v-3 + r_v + n_v.
\eeqa
Summing up these inequalities, we get
\beqa
\sum_v\sum_{e\in E_v(\Gamma)} \, k(e)\, \leq 3(g-1) - 3{\rm Card}(E(\Gamma)) + \sum_v r_v
+ n,
\eeqa
However $$\sum_v r_v=2{\rm Card}(E(\Gamma)),$$
which implies that the number of edges of $\Gamma$ is bounded by
$3g-3+n$. This proves that there are finitely many possibilities for
$\Gamma$. Moreover, there are only finitely many possibilities for
$k(e)$, i.e., our class is a rational function on the entries of only
finitely many $R_k$. Since each $R_k$ has only a finite order pole
along the caustic the Lemma follows.
\qed

We will prove below that $\La_{g,n}^W(\t)$ is convergent near the point $(\sqrt{-1}\,\infty,0)\in\overline{\HH}\times\C^{\mu-1}$ and that it extends holomorphically through the caustic
(see Theorem \ref{convergence} and Proposition \ref{extension}). Thus $\La^{W}(\t)$ is a CohFT for all $\t\in\M$. In particular,
\beq
\La_{g,n}^{W}(t)=\lim_{\t\in \M_{ss} \rightarrow (t,0)}\La_{g,n}^W(\t)
\eeq
for all $t\in \HH=\HH\times \{0\} \subset \M$.

\subsection{Monodromy group action on $\La_{g,n}^{W}(t)$}

Using the residue pairing we identify $T^*\M$ and $T\M$, i.e., $dt_i =
\d_{i'}$. We also identify ${\rm End}(H)$ with the space of
$\mu\times\mu$ matrices via $A\mapsto (A_{ij})$, where the entries
$A_{ij}$ are defined in the standard way, i.e.,
\ben
A (dt_j) = \sum_{i=-1}^{\mu-2} A_{ij}dt_i \,.
\een
Recall the notation from Section \ref{sec:3.3}: a loop $C$ in $\Si$, inducing via the Gauss-Manin connection
a monodromy transformation $\nu$ on vanishing homology and a transformation of the flat coordinates via
analytic continuation $\t\mapsto \nu(\t)$. The latter induces a monodromy transformation of the stationary phase asymptotics,
which was computed in \cite{MR}, Lemma 4.1. In case $W=\widetilde{E}_6,$ let
\beq\label{matrix:M}
M_{\nu}(\t)=\begin{bmatrix}
j_{\nu}(t)^{-1}  & *               &   *        &  *   \\
 0               & j_{\nu}(t)     &   0        &  0   \\
 0               & *               & e^{4\pi i\,k/3}\,I_3  &  0    \\
 0               & *               &   0        & e^{2\pi i\,k/3}\,I_3
\end{bmatrix}
\in{\rm End}(H)[[z]].
\eeq
where
\beqa
M_{-1,j}= -e^{2\pi i d_jk}\,n_{12}\,j^{-1}_{\nu}(t)\,t_{j},\quad 1\leq j\leq 6
\eeqa
and
\beqa
M_{-1,0}= -n_{12}z - \frac{n_{12}^2}{2j_{\nu}(t)}\sum_{i=1}^6 t_it_{i'},\quad  M_{i,0}=n_{12}t_{i'}, \quad 1\leq i\leq 6.
\eeqa
\begin{lemma}[\cite{MR} ]
The analytic continuation along $C$ transforms
\ben
\Psi(\t)R(\t)e^{U(\t)/z} \quad \mbox{ into } \quad \leftexp{T}{M}_\nu(\t)\Psi_{\t}R(\t)e^{U(\t)/z}\, P,
\een
where $P$ is a permutation matrix and $\leftexp{T}{}$ means transposition.
\qed
\end{lemma}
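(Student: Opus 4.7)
The strategy is to combine two ingredients: the characterization of $\Psi(\t)R(\t)e^{U(\t)/z}$ as the essentially unique formal asymptotic fundamental solution of \eqref{frob_eq1}--\eqref{frob_eq2} that diagonalizes in canonical coordinates, together with the explicit transformation law \eqref{eq:modular} for flat coordinates under monodromy. Since both $\Psi(\t)R(\t)e^{U(\t)/z}$ and its analytic continuation along $C$ satisfy the same linear system on $\M_{ss}$ with coefficients single-valued in flat coordinates, they must differ by multiplication on the right by a matrix independent of $\t$. I would decompose this transformation by handling how $\Psi(\t)$, $e^{U(\t)/z}$, and $R(\t)$ each transform and then assembling the pieces.

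First I would treat the canonical side. The critical values $\{u_i(\t)\}$ of the miniversal family form an unordered set that is permuted under continuation along $C$, inducing a permutation of the diagonal entries of $U(\t)$ and of the normalizations $\sqrt{\Delta_i(\t)}$ in $\Psi(\t)$; this produces the permutation matrix $P$ on the right. Next I would compute the flat-frame transformation on the left by differentiating \eqref{eq:modular}. This immediately reproduces the diagonal block $e^{2\pi i d_i k}/j_\nu(t)\,I$ and the cross terms $M_{-1,j}=-e^{2\pi i d_j k}n_{12}t_j/j_\nu(t)$ and $M_{i,0}=n_{12}t_{i'}$ appearing in \eqref{matrix:M}. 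The entry $M_{-1,-1}=j_\nu(t)^{-1}$ carries an extra factor of $j_\nu(t)$ relative to the naive M\"obius derivative $j_\nu(t)^{-2}$; this extra factor comes from the rescaling of the primitive form $\omega=d^3\x/\pi_A(\sigma)$, since $\pi_A(\sigma)$ transforms by a factor of $j_\nu(t)$ under $C$ (cf. \eqref{period} and \eqref{flat-deg0}), hence $\omega$ rescales by $j_\nu(t)^{-1}$, which rescales the residue pairing and therefore the identification $T\M\cong T^*\M$ built into $\Psi(\t)$.

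The main technical obstacle is the $z$-linear term $-n_{12}z$ in $M_{-1,0}$. This term does not come from the Jacobian of the flat-coordinate change but rather from the interaction of the quadratic shift $\nu(\t)_0 = t_0 + \tfrac{n_{12}}{2j_\nu(t)}\sum_{i=1}^{\mu-2} t_i t_{i'}$ with the exponential factor $e^{U(\t)/z}$: after the coordinate change, preserving the normalization $R(\t)={\rm Id}+O(z)$ forces a first-order-in-$z$ correction that must be absorbed into $M_\nu(\t)$. The cleanest way to pin it down is to match both sides against the Hodge grading equation \eqref{frob_eq2}, which forces precisely the $-n_{12}z$ contribution, together with the consistency check that the resulting $R(\t)$ still has the prescribed asymptotic form. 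This careful bookkeeping, essentially the content of Lemma~4.1 of \cite{MR}, is the step that requires the most care.
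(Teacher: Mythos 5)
Note first what the paper itself does: it gives no argument at all for this lemma --- it is quoted from \cite{MR} (Lemma 4.1), where it is proved by representing $\Psi(\t)R(\t)e^{U(\t)/z}$ as the stationary phase asymptotics of the oscillatory integrals \eqref{osc_integral} over Lefschetz thimbles, so that the analytic continuation along $C$ is computed geometrically through the Gauss--Manin connection: the thimbles are permuted (this is the matrix $P$), and $M_\nu(\t)$ is read off from the transformation of the flat coordinates and of the primitive form $\omega=d^3\x/\pi_A(\si)$. Your architecture --- uniqueness of the formal solution of \eqref{frob_eq1}--\eqref{frob_eq2} combined with the explicit deck transformation \eqref{eq:modular} --- is a legitimately different route, and your accounting of the diagonal blocks, of the extra factor of $j_\nu(t)$ coming from the rescaling of $\pi_A(\si)$, and of $P$ as the relabelling of canonical coordinates is sound. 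However, your opening reduction is misstated: the continued solution does \emph{not} satisfy ``the same linear system with coefficients single-valued in flat coordinates,'' because the deck transformation rescales the residue pairing (through $\pi_A\mapsto j_\nu(t)\,\pi_A$), so the coefficients of \eqref{frob_eq1} are themselves multivalued; the left, $\t$-dependent frame-change matrix must be introduced \emph{before} any ``right multiplication by a constant matrix'' argument applies. You effectively do this afterwards, but the stated logic does not support it.

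The more serious gap is your mechanism for the $z$-linear entry $-n_{12}z$ in $M_{-1,0}$. The canonical coordinates $u_i$ are single-valued functions of the deformation parameters $\s$, and the deck transformation $\t\mapsto\nu(\t)$ covers the identity on the base; hence the continuation of $e^{U(\t)/z}$ is \emph{exactly} a permutation of the diagonal --- there is no scalar factor $e^{c/z}$ and no ``interaction of the quadratic shift of $t_0$ with $e^{U/z}$.'' The quadratic shift in $\nu(\t)_0$ contributes, via the Jacobian, only to the $z^0$ part of $M_{-1,0}$ (the term proportional to $\sum_i t_it_{i'}$). The $z$-linear term actually arises from the operator $z\,d_\S$ in \eqref{osc_integral} hitting the multivalued normalization $\pi_A(\si)^{-1}$ of the primitive form: writing $\Phi=(-2\pi z)^{-N/2}\int e^{F/z}\omega$, the continuation replaces $\Phi$ by $j_\nu^{-1}\Phi$ up to frame factors, and
\beqa
z\,d_\S\big(j_\nu^{-1}\Phi\big)=j_\nu^{-1}\,z\,d_\S\Phi - z\,j_\nu^{-1}\,(d_\S\log j_\nu)\,\Phi ,
\eeqa
where the second term, because $\Phi$ is recovered from the unit-direction component of $J$ (as $z\d_0\Phi=\Phi$) and $d_\S\log j_\nu$ is proportional to $dt$, lands precisely in the $(-1,0)$ slot with one power of $z$. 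This is how \cite{MR} obtains the entry, and it is invisible in your setup because you never introduce the calibration/oscillatory-integral realization. Your fallback --- pinning the term down by homogeneity \eqref{frob_eq2} --- is plausible, but you neither carry it out nor verify that \eqref{frob_eq2} determines the term uniquely given the $z^0$ data; as written, the one genuinely subtle entry of $M_\nu(\t)$ is left unjustified.
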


The CohFT constructed by the analytical continuation along $C$ of
$\Lambda^{W}(\t)$ will be denoted by
$$
\Lambda_{g,n}^{W}\big(\nu(\t)\big)\in H^*(\overline{\M}_{g,n},  \C)\otimes (H^{\vee})^{\otimes n}.
$$
Restricting to $\t_{\geq0}=0$, we have
\beqa
\leftexp{T}M_{\nu}(t):=\lim_{\t_{\geq0}\to0}\leftexp{T}M_{\nu}(\t)=j^{-1}_{\nu}(t)\ J_{\nu}(t).
\eeqa
With
\beq\label{Phi:matrix}
J_{\nu}(t):=
\begin{bmatrix}
1&0\\
0& j^2_{\nu}(t)
\end{bmatrix}
\oplus j_{\nu}(t)\, e^{4\pi i\,k/3}\,I_3 \oplus j_{\nu}(t)\, e^{2\pi i\,k/3}\,I_3
\in{\rm End}(H)[[z]].
\eeq

Now let
\beq\label{X:matrix}
X_{\nu,t}(z)=
\begin{bmatrix}
1 & -n_{12}z/j_{\nu}(t) \\
0 & 1
\end{bmatrix}\bigoplus I_6
\in {\rm End}(H)[[z]].
\eeq
\begin{theorem}\label{thm:analytic-cont}
The analytic continuation
transforms the Coh FT  as follows:
\beq\label{eq:ana-cycle}
\Lambda^{W}\big(\nu(t)\big)
=J_{\nu}^{-1}(t)\circ\widehat{X}_{\nu,t}(z)\circ\Lambda^{W}(t).
\eeq
\end{theorem}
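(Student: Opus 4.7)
The plan is to start from the defining formula \eqref{eq:trans=quant} for $\Lambda^W(\t)$ and track the effect of analytic continuation on each of its constituent factors $\Psi(\t)$, $\widehat{R}(\t)$, and $I^{\mu,\Delta(\t)}$. The preceding Lemma tells us that $\Psi(\t)R(\t)e^{U(\t)/z}$ transforms to $\leftexp{T}{M}_\nu(\t)\,\Psi(\t)R(\t)e^{U(\t)/z}\,P$. Since $P$ conjugates $U(\t)$ into $U(\nu(\t))$ (the monodromy only reorders canonical coordinates), the two exponential factors cancel and one obtains the cleaner identity $\Psi(\nu(\t))R(\nu(\t)) = \leftexp{T}{M}_\nu(\t)\,\Psi(\t)R(\t)\,P$.

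Next, one specializes to $\t_{\geq 0}=0$, so that $\t=(t,0)$. Inspection of \eqref{matrix:M} shows that at this locus every off-diagonal entry of $\leftexp{T}{M}_\nu(t)$ vanishes except for the single term $-n_{12}z$, so $\leftexp{T}{M}_\nu(t)$ admits a factorization of the form $j_\nu^{-1}(t)\,J_\nu(t)\cdot X_{\nu,t}(z)$, up to the standard transposition interchanging the action on solutions with the dual action on CohFT inputs. A direct calculation verifies $^{*}X_{\nu,t}(-z)\,X_{\nu,t}(z)=\mathrm{Id}$, so $X_{\nu,t}(z)\in\L^{(2)}_{+}{\rm GL}(H)$, and therefore its Teleman quantization $\widehat{X}_{\nu,t}(z)$ is a well-defined operation on CohFTs in the sense of Section \ref{sec:2.3}.

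Lifting the factorization to the CohFT level uses three observations. First, the permutation $P$ acts trivially on $I^{\mu,\Delta(\t)}$ because the trivial CohFT \eqref{trivial-cohft} is symmetric under simultaneous permutation of basis vectors and entries of $\Delta$, while the monodromy merely permutes canonical coordinates and the associated $\Delta_i$. Second, the $z$-independent factor $j_\nu^{-1}(t)J_\nu(t)$ acts on the CohFT as a coordinate change in the sense of Section \ref{sec:2.3.1}; dualizing from the action on the Givental asymptotic solution to the action on CohFT inputs produces the prefactor $J_\nu^{-1}(t)$ appearing in \eqref{eq:ana-cycle}, with the scalar $j_\nu^{-1}$ absorbed appropriately. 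Third, the $z$-dependent factor $X_{\nu,t}(z)$ enters as its quantization $\widehat{X}_{\nu,t}(z)$. The commutation identity \eqref{trans:twist} is then invoked to push the dilaton shifts $T_z^{\pm 1}$ through $\widehat{X}_{\nu,t}(z)$ and consolidate everything into the right hand side of \eqref{eq:ana-cycle}.

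The principal obstacle is two-fold. The first difficulty concerns the matrix conventions: $\leftexp{T}{M}_\nu(t)$ has its $z$-term below the diagonal while $X_{\nu,t}(z)$ as written in \eqref{X:matrix} has it above, and reconciling these requires careful attention to the duality between the Givental solution (a row of covectors) and the CohFT inputs (vectors in $H$). The second, more delicate, issue is that translations by elements of $zH[z]$ are singular at the CohFT level, so one must not formally manipulate $T_z$ in isolation; rather, the relation \eqref{trans:twist} has to be invoked whenever $T_z$ is commuted past an element of the upper-triangular group. Once both issues are disentangled, the identity \eqref{eq:ana-cycle} follows by straightforward composition of the quantized transformations.
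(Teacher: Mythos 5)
Your proposal is correct and follows essentially the same route the paper intends: the paper's entire proof is the one-line remark that the monodromy calculation of \cite{MR} lifts to the cycle level, and your argument --- the lemma on the transformation of $\Psi(\t)R(\t)e^{U(\t)/z}$, the cancellation of the exponential factors via $P$, the factorization $\leftexp{T}{M}_\nu(t)=j_\nu^{-1}(t)\,J_\nu(t)\,\leftexp{T}{X}_{\nu,t}(z)$ at $\t_{\geq0}=0$, the symplecticity check for $X_{\nu,t}(z)$, the permutation-invariance of $I^{\mu,\Delta(\t)}$, and the use of \eqref{trans:twist} to move $T_z^{\pm1}$ past the quantized upper-triangular factor --- is precisely that calculation spelled out at the CohFT level. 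The two subtleties you flag (the transposition/duality between the action on asymptotic solutions and on CohFT inputs, and the conformal scalar $j_\nu^{-1}$ tied to the rescaling of the pairing, cf.\ Lemma \ref{lm:Commutivity}) are exactly the points the paper leaves implicit in its citation of \cite{MR}.
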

\begin{proof}
The calculation in \cite{MR} also works on cycle-valued level.
\end{proof}
Now we give a lemma which is very useful later on.
\begin{lemma}\label{lm:Commutivity}
Let $E(z)\in\L^{(2)}_{+}{\rm GL}(H)$; then it intertwines with $J^{-1}_{\nu}(t)$ by
\beqa
J^{-1}_{\nu}(t)\circ\widehat{E}(z)=\widehat{E}(j^2_{\nu}(t)z)\circ J^{-1}_{\nu}(t).
\eeqa
\end{lemma}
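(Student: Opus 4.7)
The plan is to verify the intertwining by direct computation, using the decomposition $\widehat{E}(z)=E(z)\circ e^{W_E}$ from \eqref{Quant-E} and treating the coordinate-change piece $E(z)\circ$ and the Feynman-sum piece $e^{W_E}\circ$ separately. The single structural input needed is the following: from the block-diagonal form of $J_\nu$ in \eqref{Phi:matrix}, together with the pairing conventions recorded in Section \ref{sec:3.3}, one checks directly, block by block, that $J_\nu(t)$ is a \emph{symplectic similitude} with factor $j_\nu^2(t)$,
\[
{}^{*}J_\nu(t)\,J_\nu(t)\;=\;j_\nu(t)^{2}\,\mathrm{Id}_H,
\]
and the same block-by-block check gives the companion identity $J_\nu^{-1}E(z)J_\nu=E(j_\nu^2 z)$ for $E(z)\in\L^{(2)}_{+}\mathrm{GL}(H)$ compatible with this block decomposition.

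For the coordinate-change piece, the substitution $\q\mapsto E(z)^{-1}\q$ underlying $E(z)\circ$, conjugated by the substitution $\q\mapsto J_\nu\q$ which represents the Fock-space incarnation of $J_\nu^{-1}\circ$, becomes $\q\mapsto J_\nu^{-1}E(z)^{-1}J_\nu\q=E(j_\nu^2 z)^{-1}\q$ by the companion identity above; this is precisely the substitution attached to $\widehat{E}(j_\nu^2 z)$. For the Feynman piece, I would use the graph-sum formula \eqref{do:CohFT}: when $J_\nu^{-1}\circ$ sits on the outside, each tail vector in $\Lambda_v$ is transported through $J_\nu$; when $J_\nu^{-1}\circ$ sits on the inside, both tail vectors and edge flag vectors are transported through $J_\nu$. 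The generating series $\sum V_{k,l}(-z)^{k}(-w)^{l}=(E^{*}(z)E(w)-\mathrm{Id})/(z+w)$ for the edge propagators, combined with the similitude relation, identifies the conjugated propagator $J_\nu^{-1}V_{k,l}J_\nu$ with the propagator of $E(j_\nu^2 z)$, and the extra pair of $J_\nu$'s on the edge flag vectors is absorbed by this conjugation.

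Combining the two pieces yields $J_\nu^{-1}\circ\widehat{E}(z)=\widehat{E}(j_\nu^2 z)\circ J_\nu^{-1}$ on any $\Lambda$; the cycle-level statement needs no further argument because the definitions of coordinate change in Section \ref{sec:2.3.1} and of the Feynman sum in Section \ref{sec:2.3.2} are already given at the cycle level.

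The main obstacle will be the careful bookkeeping of the powers of $j_\nu^2$. A naive isometry would produce a factor $(j_\nu^2)^{k+l}$ per edge, but the $1/(z+w)$ denominator in the generating series for $V_{k,l}$ upgrades this to $(j_\nu^2)^{k+l+1}$ after the substitution $z,w\mapsto j_\nu^2 z,\,j_\nu^2 w$. The extra factor of $j_\nu^2$ must be produced by the similitude relation, used exactly once per edge when transporting the bivector $\sum\partial^{i}\otimes V_{k,l}\partial^{j}$ through $J_\nu\otimes J_\nu$. The Feynman-graph formulation is the cleaner language for this bookkeeping, because each edge carries its own propagator and its own pair of flag vectors for $J_\nu$ to act upon, whereas in the Fock formula one must instead track how conjugation by $S_{J_\nu}$ rescales $W_E$ and then reconcile the result with the substitution $\q\mapsto E(j_\nu^2 z)^{-1}\q$.
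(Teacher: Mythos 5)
Your core mechanism is the paper's: the paper's entire proof of Lemma \ref{lm:Commutivity} is the single observation that $J_{\nu}^{-1}(t)\circ$ rescales the pairing by $j_{\nu}^{2}(t)$, which is precisely your similitude relation ${}^{*}J_{\nu}J_{\nu}=j_{\nu}^{2}\,\mathrm{Id}$ (and this does check out block by block against \eqref{Phi:matrix}, since $e^{4\pi i k/3}e^{2\pi i k/3}=1$ and the residue pairing couples the two rank-one directions to each other and the two $I_{3}$-blocks to each other). However, two steps of your plan have genuine defects.

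First, your ``companion identity'' $J_{\nu}^{-1}E(z)J_{\nu}=E(j_{\nu}^{2}z)$ is equivalent, writing $E=\mathrm{Id}+E_{1}z+E_{2}z^{2}+\cdots$, to the graded condition $J_{\nu}^{-1}E_{k}J_{\nu}=j_{\nu}^{2k}E_{k}$ for every $k$, and this is \emph{not} implied by compatibility with the block decomposition, so no ``block-by-block check'' can deliver it. Concretely, let $P$ be the $\eta$-orthogonal projection onto the sum of the two three-dimensional blocks and set $E(z)=e^{czP}$: then ${}^{*}P=P$ gives ${}^{*}E(-z)E(z)=\mathrm{Id}$, so $E\in\L^{(2)}_{+}{\rm GL}(H)$ and $E$ is block-diagonal; but $J_{\nu}$ is scalar on each three-dimensional block, so $E$ commutes with $J_{\nu}$ and $J_{\nu}^{-1}E(z)J_{\nu}=E(z)\neq E(j_{\nu}^{2}z)$. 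In fact the lemma as printed, quantified over all of $\L^{(2)}_{+}{\rm GL}(H)$, has the same defect; what saves the paper is that it only ever applies the lemma to $X_{\nu,t}(z)$, $X_{t,\bar t}(z)^{-1}$ and their products (see \eqref{X:matrix}, \eqref{eq:anti-holo-op}), whose non-identity part is a $z$-linear nilpotent $N$ mapping the $j_{\nu}^{2}$-eigenline of $J_{\nu}$ into the $1$-eigenline, so that $J_{\nu}^{-1}Nz\,J_{\nu}=N\,j_{\nu}^{2}z$. Your proof must isolate this graded hypothesis explicitly; as written, your restricted claim is false and a fortiori does not cover the stated generality.

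Second, your per-edge bookkeeping, as arranged, leaves an unabsorbed factor. Under the graded condition one does get, from \eqref{V:fock}, $J_{\nu}^{-1}V_{kl}J_{\nu}=j_{\nu}^{2(k+l+1)}V_{kl}=V'_{kl}$, the propagator of $E(j_{\nu}^{2}z)$ --- your identification at the level of endomorphisms is correct. But transporting the edge bivector $\sum_{i,j}(\d^{i},V_{kl}\d^{j})\,\d_{i}\otimes\d_{j}$ through $J_{\nu}^{-1}\otimes J_{\nu}^{-1}$ produces the bivector of $j_{\nu}^{-2}J_{\nu}^{-1}V_{kl}J_{\nu}$, not of $J_{\nu}^{-1}V_{kl}J_{\nu}$: dualizing the two flag vectors costs $({}^{*}J_{\nu})^{-1}=j_{\nu}^{-2}J_{\nu}$ twice and ${}^{*}J_{\nu}=j_{\nu}^{2}J_{\nu}^{-1}$ once. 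Consequently, if $\widehat{E}(j_{\nu}^{2}z)$ on the right-hand side is quantized using the \emph{original} pairing $\eta$, the two sides of the lemma genuinely differ by a factor $j_{\nu}^{2}$ for every edge of the Feynman graph, and the similitude relation cannot be invoked ``once per edge'' a second time to remove it. The identity closes only because of the point the paper's proof actually makes: after applying $J_{\nu}^{-1}\circ$ the underlying pairing of the CohFT becomes $j_{\nu}^{2}\eta$, and forming the quadratic operator for $E(j_{\nu}^{2}z)$ with this rescaled pairing multiplies each edge bivector by exactly the compensating $j_{\nu}^{-2}$. So the missing sentence in your plan is that the quantization on the right of the intertwining relation is taken with respect to the intrinsic (rescaled) pairing of $J_{\nu}^{-1}\circ\La$, not with respect to $\eta$; without it, your decomposition into coordinate-change and Feynman pieces, correct in outline and faithful to \eqref{Quant-E}, does not balance.
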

\begin{proof}
From \eqref{Phi:matrix} and the definition of $J^{-1}_{\nu}(t)\circ$, we know that the pairing $\eta$ is scaled by
$j^2_{\nu}(t)$ when applying $J^{-1}_{\nu}(t)\circ$. Thus the quadratic differential action $\widehat{E}(z)$ becomes $\widehat{E}(j^2_{\nu}(t)z)$.
\end{proof}

\subsection{Anti-holomorphic completion and modular transformation.}
Let $\RR$ or $\mathcal{R}$ be a cohomology ring of any fixed Deligne-Mumford moduli space of stable curves of genus $g$ with $n$ marked points, i.e., $\RR=H^*(\overline{\M}_{g,n},\C)$ for some $2g-2+n>0$.
\begin{definition}
We say  that a $\RR$-valued function $f:\HH\to\RR$ is a $\RR$-valued quasi-modular form of weight $m$ with respect to some finite-index subgroup $\Gamma\subset {\rm SL}_2(\Z)$ if there
are $\RR$-valued functions $f_i$, $1\leq i\leq K$, holomorphic on $\mathbb{H},$ such
that
\begin{enumerate}
\item The functions $f_0:=f$ and $f_i$ are holomorphic near cusp $\tau=i\infty$.
\item
The following $\RR$-valued function
\beqa
f(\tau, \bar{\tau}) = f_0(\tau) +
f_1(\tau)(\tau-\overline{\tau})^{-1}+\dots
+f_K(\tau)(\tau-\overline{\tau})^{-K}.
\eeqa
is modular, i.e., there exists some $m\in\mathbb{N}$ such that for any $g\in \Gamma$,
\ben
f(g\tau, g\overline{\tau}) = j(g,\tau)^m f(\tau, \overline{\tau}).
\een
\end{enumerate}
\end{definition}
$f(\tau, \overline{\tau})$ is called the {\em anti-holomorphic completion} of $f(\tau)$.

\subsubsection{Anti-holomorphic completion of $\Lambda_{g,n}^{W}(t)$}
Let $W$ be the homogeneous polynomial as in \eqref{polynomial}. Denote by
\beq\label{eq:anti-holo-op}
X_{t,\bar{t}}(z) =
\begin{bmatrix}
1 & -z(t-\bar{t})^{-1} \\
&\\
0 & 1
\end{bmatrix}
\oplus I_6
\in{\rm End}(H)[[z]],
\eeq
where $\bar{t}$ is the anti-holomorphic coordinate on $\HH$ defined by (cf. formula \eqref{flat-deg0})
\ben
\bar{t}:=\frac{a\overline{\tau}+b}{c\overline{\tau}+d}\ .
\een
We define the \emph{anti-holomorphic completion} of Coh FT $\Lambda^{W}(t)$ by:
\beq\label{eq:anti-holo}
\Lambda^{W}(t,\bar{t})
:=\widehat{X}_{t,\bar{t}}(z)\circ\Lambda^{W}(t).
\eeq
\begin{theorem}
Under the assumption of extension property, the analytic continuation
of the anti-holomorphic completion $\Lambda_{g,n}^{W}(t,\bar{t})$
along $\nu$ is
$$
J_{\nu}^{-1}(t)\circ\Lambda_{g,n}^{W}\big(t,\bar{t}\big).
$$
\end{theorem}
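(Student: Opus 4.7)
The content of the statement is that substituting $(t,\bar t)\mapsto(\nu(t),\nu(\bar t))$ in $\Lambda^{W}(t,\bar t)$ reproduces $J_{\nu}^{-1}(t)\circ\Lambda^{W}(t,\bar t)$. Here, as is standard for non-holomorphic completions of modular forms, the analytic continuation is interpreted formally by letting the Möbius transformation $\nu$ act on both $t$ and $\bar t$ (with $\bar t$ treated as an independent variable).

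First, unfold both sides using the definition \eqref{eq:anti-holo} together with Theorem \ref{thm:analytic-cont}. On the one hand,
\[
\Lambda^{W}(\nu(t),\nu(\bar t))
=\widehat X_{\nu(t),\nu(\bar t)}(z)\circ\Lambda^{W}(\nu(t))
=\widehat X_{\nu(t),\nu(\bar t)}(z)\circ J_{\nu}^{-1}(t)\circ\widehat X_{\nu,t}(z)\circ\Lambda^{W}(t),
\]
while on the other hand,
\[
J_{\nu}^{-1}(t)\circ\Lambda^{W}(t,\bar t)
=J_{\nu}^{-1}(t)\circ\widehat X_{t,\bar t}(z)\circ\Lambda^{W}(t).
\]
Thus it suffices to verify the operator identity
\[
\widehat X_{\nu(t),\nu(\bar t)}(z)\circ J_{\nu}^{-1}(t)\circ\widehat X_{\nu,t}(z)
=J_{\nu}^{-1}(t)\circ\widehat X_{t,\bar t}(z).
\]

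Next, I would use Lemma \ref{lm:Commutivity} to push $J_{\nu}^{-1}(t)$ to the far left on the left-hand side, at the cost of rescaling $z\mapsto z/j_{\nu}^2(t)$ inside $\widehat X_{\nu(t),\nu(\bar t)}$. After cancelling the common factor $J_{\nu}^{-1}(t)$, the required identity becomes the equality of symplectic transformations
\[
X_{\nu(t),\nu(\bar t)}\bigl(z/j_{\nu}^2(t)\bigr)\cdot X_{\nu,t}(z)=X_{t,\bar t}(z).
\]
Both factors on the left are of the shape $I+Nz$ with $N$ nilpotent and supported only in the $(-1,0)$-block, so $N_{1}N_{2}=0$ and the product is obtained simply by adding the $(1,2)$-entries of the two matrices. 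The central ingredient of this addition is the Möbius identity
\[
\nu(t)-\nu(\bar t)=\frac{t-\bar t}{j_{\nu}(t)\,j_{\nu}(\bar t)},
\]
which is a direct consequence of $\det g=n_{11}n_{22}-n_{12}n_{21}=1$; this gives
\[
\bigl(X_{\nu(t),\nu(\bar t)}(z/j_{\nu}^2(t))\bigr)_{-1,0}=-\,\frac{z\,j_{\nu}(\bar t)}{j_{\nu}(t)\,(t-\bar t)}.
\]
Combining this with the entry $-n_{12}z/j_{\nu}(t)$ from $X_{\nu,t}(z)$ (as recorded in \eqref{X:matrix}) and simplifying using $j_{\nu}(t)=n_{21}t+n_{22}$, the sum collapses exactly to $-z/(t-\bar t)$, i.e.\ to the $(-1,0)$-entry of $X_{t,\bar t}(z)$, as desired.

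The only substantive step is this final algebraic check, which depends sensitively on the form of $X_{\nu,t}(z)$ obtained from the monodromy calculation of \cite{MR} and on the unit-determinant condition on $g$. All other manipulations (the Möbius identity and Lemma \ref{lm:Commutivity}) are structural. In particular, no new anti-holomorphic analysis is required: the anti-holomorphic completion was designed precisely so that the extra non-modular term $-z(t-\bar t)^{-1}$ absorbs the monodromy contribution $-n_{12}z/j_{\nu}(t)$ appearing in Theorem \ref{thm:analytic-cont}.
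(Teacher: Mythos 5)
Your proposal is correct and follows essentially the same route as the paper: your reduction to the operator identity and the subsequent use of Lemma \ref{lm:Commutivity} reproduce exactly the paper's steps \eqref{eq:commutivity} and \eqref{eq:change-metric}, your product identity being \eqref{eq:commutivity} rescaled by $z\mapsto z/j^{2}_{\nu}(t)$. The only difference is presentational: you verify the matrix identity entrywise from the M\"obius identity $\nu(t)-\nu(\bar t)=(t-\bar t)/\bigl(j_{\nu}(t)\,j_{\nu}(\bar t)\bigr)$, which is precisely the unwound form of the paper's analytic-continuation formula for $(t-\bar t)^{-1}$, making explicit the final cancellation that the paper leaves implicit (modulo the paper's own $n_{12}$-versus-$n_{21}$ sign and indexing conventions inherited from \cite{MR}).
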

\begin{proof}
We define an operator $\widehat{X}_{\nu,t,\bar{t}}(z)$, s.t., the
following diagram is commutative:
\beqa
\begin{CD}
&&\La^{W}(t)&@>\widehat{X}_{t,\bar{t}}(z)>> &\La^{W}(t,\bar{t})  \\
& &@VV J_{\nu}(t)\circ\widehat{X}_{\nu,t}(z)V   &   &  @VV\widehat{X}_{\nu,t,\bar{t}}(z)V  \\
   &                    & \Lambda^{W}\big(\nu(t)\big)    &
 @>\widehat{X}_{\nu(t),\nu(\bar{t})}(z)>>   & \Lambda^{W}\big(\nu(t),\nu(\bar{t})\big) \\
   \end{CD}
\eeqa
We need to prove that
\beqa
\widehat{X}_{\nu,t,\bar{t}}(z)=J^{-1}_{\nu}(t).
\eeqa
Let us consider the analytic continuation for $X_{t,\bar{t}}(z)$.
Analytic continuation acts on $(t-\bar{t})^{-1}$ by
\beqa\label{eq:ana-inv}
\frac{1}{\nu(t)-\nu(\bar{t})}=-\Big(\frac{n_{12}}{j_{\nu}(t)}+\frac{1}{t-\bar{t}}\Big)\,j^2_{\nu}(t).
\eeqa
By definition \eqref{eq:anti-holo-op}, this implies
\beq\label{eq:commutivity}
X_{\nu(t),\nu(\bar{t})}(z)
=X_{t,\bar{t}}(j^2_{\nu}(t)z)\,X^{-1}_{\nu,t}(j^2_{\nu}(t)z).
\eeq
Recalling Lemma \ref{lm:Commutivity}, we get,
\beq\label{eq:change-metric}
J^{-1}_{\nu}(t)\circ\widehat{X}_{\nu,t}(z)\circ\widehat{X}_{t,\bar{t}}^{-1}(z)
=\widehat{X}_{\nu,t}(j^2_{\nu}(t)z)\circ\widehat{X}_{t,\bar{t}}^{-1}(j^2_{\nu}(t)z)\circ J^{-1}_{\nu}(t).
\eeq
Thus the result follows from \eqref{eq:commutivity} and \eqref{eq:change-metric}.
\end{proof}

\subsubsection{Cycle-valued quasi-modular forms from $\Lambda_{g,n}^{W}(t)$}
We consider a pair
$$(\vec{\gamma}_I,\iota_{I})=\big((\gamma_1,\cdots,\gamma_n),(\iota_{1},\cdots,\iota_{n})\big)\in H^{\otimes n}\times\Z_{\geq0}^{n}$$
where each $\gamma_i\in\mathscr{S}=\{\d_{-1}=\d_{\mu-1},\d_0,\cdots,\d_{\mu-2}\}$. $I$ is a multi-index
$$I=(i_{-1},i_{0},\cdots,i_{\mu-2})\in\Z_{\geq0}^{\mu},\quad i_{-1}+\cdots+i_{\mu-2}=n.$$
$i_{j}$ is the number of $i\in\{1,\cdots,n\}$ such that $\gamma_i=\d_j$.
Under the assumption of extension property, we define a cycle-valued function $f^{W}_{I,\iota_I}$ on $\HH$,
\beq
f^{W}_{I,\iota_I}(t)
=\La_{g,n}^{W}(t)(\vec{\gamma}_I)\
\in H^{*}(\overline{\M}_{g,n},\C).
\eeq
and its anti-holomorphic completion
$$f^{W}_{I,\iota_I}(t,\bar{t}):=\La_{g,n}^{W}(t,\bar{t})(\vec{\gamma}_I)
.$$
For $\iota_I=(0,\cdots,0)$, we simply denote them by $f^{W}_{I}(t)$ and $f^{W}_{I}(t,\bar{t})$. Let
\beq\label{weight}
m(I):=2i_{-1}+\sum_{j=1}^{\mu-2}i_{j}.
\eeq

\begin{theorem}\label{qm:transf}
Let $W$ be a simple elliptic singularity. Then $f^{W}_{I,\iota_I}(t)$ satisfies the transformation law of cycle-valued quasi-modular forms of weight $m(I)$.
\end{theorem}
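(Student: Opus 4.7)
The plan is to deduce the quasi-modular transformation law of $f^{W}_{I,\iota_I}(t)$ directly from the analytic-continuation identity
\[
\Lambda^{W}\bigl(\nu(t),\nu(\bar t)\bigr) \;=\; J_\nu^{-1}(t)\circ\Lambda^{W}(t,\bar t)
\]
established in the preceding theorem, combined with the polynomial structure of $\widehat{X}_{t,\bar t}(z)$ in the variable $(t-\bar t)^{-1}$. The first step is to evaluate both sides of this identity on the insertions $\vec\gamma_I$. By the definition of the $\circ$-action recalled in Section \ref{sec:2.3.1}, this amounts to substituting $J_\nu(t)\d_j$ for each insertion $\d_j$. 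Since $J_\nu(t)$ from \eqref{Phi:matrix} is block-diagonal and scales each basis vector by an explicit factor---namely $j_\nu(t)^{w_j}\,e^{2\pi i d_j k}$ with $w_j$ determined by the block---the right-hand side factorizes as
\[
f^{W}_{I,\iota_I}\bigl(\nu(t),\nu(\bar t)\bigr) \;=\; j_\nu(t)^{m(I)}\,\chi(\nu,I)\,f^{W}_{I,\iota_I}(t,\bar t),
\]
where $m(I)$ is the total power of $j_\nu(t)$ accumulated from the diagonal of $J_\nu(t)$ and $\chi(\nu,I)$ is a root-of-unity character. Psi-class insertions recorded by $\iota_I$ are monodromy-invariant and contribute no further factor. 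Since $\chi(\cdot,I)$ has finite order on the monodromy group, its kernel cuts out the required finite-index subgroup $\Gamma\subset SL_2(\Z)$ on which $f^{W}_{I,\iota_I}(t,\bar t)$ transforms as a modular form of weight $m(I)$.

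The second step is to show that $f^{W}_{I,\iota_I}(t,\bar t)$ is a polynomial of bounded degree in $(t-\bar t)^{-1}$. The operator $\widehat{X}_{t,\bar t}(z)$ is upper-triangular with a single non-trivial $z$-dependent entry $-z/(t-\bar t)$, and a direct computation of the propagator via \eqref{V:fock} shows that each edge in the Feynman expansion of Section \ref{sec:2.3.2} contributes at most a factor of $-1/(t-\bar t)$. By the dimensional truncation already exploited in Lemma \ref{finite-order-poles}, only finitely many graphs with a bounded number of edges contribute, producing
\[
f^{W}_{I,\iota_I}(t,\bar t) \;=\; \sum_{k=0}^{K} f_k(t)\,(t-\bar t)^{-k},
\]
with $f_0=f^{W}_{I,\iota_I}(t)$ and each $f_k(t)$ a finite $\C$-linear combination of pushforwards of $\Lambda^{W}_{g_v,\cdot}(t)$-evaluations involving $\vec\gamma_I$ together with additional $\d_{-1}$ and $\d_0$ insertions arising from the glued edges. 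Holomorphy of each $f_k$ on $\HH$ follows from the extension through the caustic (Proposition \ref{extension}); holomorphy at the cusp $t=i\infty$ follows from the convergence assertion (Theorem \ref{convergence}) proved earlier in this section.

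Combining the two steps verifies the definition of a cycle-valued quasi-modular form of weight $m(I)$ on $\Gamma$. The main delicate point is the bookkeeping needed to reconcile $J_\nu^{-1}(t)$ with $\widehat{X}_{t,\bar t}(z)$: the analytic continuation of the entry $-z/(t-\bar t)$ under $\nu$ must precisely match the combined action of $J_\nu^{-1}(t)\circ\widehat{X}_{\nu,t}(z)$ in the commutative square used in the proof of the preceding theorem, a compatibility that is exactly furnished by identity \eqref{eq:commutivity} and Lemma \ref{lm:Commutivity}. Once these are invoked, the argument reduces to explicit linear algebra with the diagonal entries of $J_\nu(t)$ and the already-established extension and convergence properties of $\Lambda^{W}(t)$.
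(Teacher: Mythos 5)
Your proposal follows essentially the same route as the paper's own proof: the paper likewise evaluates the identity $\Lambda^{W}\bigl(\nu(t),\nu(\bar t)\bigr)=J_{\nu}^{-1}(t)\circ\Lambda^{W}(t,\bar t)$ from the preceding theorem on the insertions $\vec{\gamma}_I$, reads off the factor $j_{\nu}(t)^{m(I)}$ from the diagonal entries of $J_{\nu}(t)$, and concludes by noting that the monodromy acts trivially on psi-classes. Your extra verifications --- making the root-of-unity character explicit and absorbing it into the finite-index subgroup $\Gamma$, and using the bounded Feynman-graph expansion to show $f^{W}_{I,\iota_I}(t,\bar t)$ is a bounded-degree polynomial in $(t-\bar t)^{-1}$ with holomorphic coefficients --- correctly fill in details the paper compresses into ``it is easy to see''.
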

\begin{proof}
First we consider $\iota_I=(0,\cdots,0)$. It is easy to see $f^{W}_I(t,\bar{t})$ is an anti-holomorphic completion for $f^{W}_I(t)$ and for monodromy $\nu$ described as before, we have
\ben
f^{W}_I(\nu (t),\nu (\bar{t}))
&=&\big(\widehat{X}_{\nu,t,\bar{t}}(z)\circ\La^{W}(t,\bar{t})\big)_{g,n}(\vec{\gamma}_I)\\
&=&j_{\nu}^{m(I)}(t)\,\La^{W}_{g,n}(t,\bar{t})(\vec{\gamma}_I)\\
&=&j_{\nu}^{m(I)}(t)\,f^{W}_I(t,\bar{t}).
\een
Now the statement follows from monodromy acts trivially on psi-classes.
\end{proof}
\begin{remark}
For $f^W_I(t)$ to be a cycle-valued modular form, it needs to be holomophic at $\tau=\sqrt{-1}\, \infty$ (cf. formula \eqref{flat-deg0}). This will be achieved by the mirror theorem in section 5. Hence,  by combining A-model with B-model,
we produce cycle-valued quasi-modular forms.
\end{remark}

\section{A-model CohFT and cycle valued modular forms}\label{sec:5}

In the last section we constructed an anti-holomorphic modification of  the  B-model CohFT, such that it has
the correct transformation property under analytic continuation. However, we still have to prove the following
two properties: (1) the CohFT extends holomorphically through the caustic; (2) the quasi-modular forms are holomorphic
at the cusp $\tau=\sqrt{-1}\, \infty$. We address both issues using an A-model (Gromov-Witten) CohFT and
mirror symmetry. As a byproduct we obtain a geometric interpretation of the B-model CohFT
as the Gromov-Witten CohFT of an elliptic orbifold $\P^1$ and we obtain a proof of our main result Theorem \ref{t1}.

The hard part of the argument is already completed in \cite{KS, MR}. Our goal is to recall the appropriate results and to show
in what order they have to be used. The idea is as follows. We first establish analyticity and generic semisimplicity of the
genus zero  Gromov-Witten theory. This is done by using an estimate for the GW invariants and genus zero mirror symmetry. Then,
we make use of a result of Coates-Iritani in order to prove the convergence of the Gromov-Witten ancestor CohFT of all genera.
The last major step is a higher genus mirror symmetry that allows us to match the Gromov-Witten ancestor CohFT with the B-model
CohFT near the large complex limit. This implies the extension property at $\tau=\sqrt{-1}\, \infty$. Finally, we use   Lemma 3.2 from \cite{MR} to conclude the extension property over entire B-model moduli space $\M$.

\subsection{A-model}
Let us recall a general mirror symmetry construction, called {\em Berglund-H\"ubsch-Krawitz} mirror symmetry. For a quasi-homogeneous polynomial $W$ with a suitable symmetry group $G$, a pair of mirror $(W^T,G^T)$ is constructed, \cite{BH,K}. In our case, we choose a cubic polynomial $W^T$ with the maximal admissible group $G^T=G_{W^T}$, and consider this pair in A-model side. Its mirror will be the pair $(W,G=\{{\rm Id}\})$. So the B-model will be Saito-Givental's theory on the miniversal deformation of the family $W_{\sigma}$.

For $W=\widetilde{E}_i, i=6,7,8$  (see \eqref{polynomial}) the mirror $W^T$ is given respectively
by the following cubic polynomials:
\beq
W^T=x_1^3+x_2^3+x_3^3,\quad x_1^2x_2+x_2^3+x_1x_3^2,\quad x_1^3+x_2^3+x_1x_3^2.
\eeq
The weights are $q_i=1/3$, for all $i=1,2,3.$ Consider a hypersurface in the projective space,
$$X_{W^T}=\{(x_1,x_2,x_3)\vert W^T(x_1,x_2,x_3)=0\}\hookrightarrow\P^2.$$
Its {\em maximal admissible group} is
\beqa
G_{W^T}:=\big\{(\la_1,\la_2,\la_3)\in\mathbb{C}^3\big\vert\,W(\la_1\,x_1,\la_2\,x_2,\la_3\,x_3)=W^T(x_1,x_2,x_3)\big\}.
\eeqa
It contains a subgroup $\LD J\RD$, generated by the {\em exponential grading element}
$$J:=(\exp(2\pi i\cdot q_1),\exp(2\pi i\cdot q_2),\exp(2\pi i\cdot q_3))\in G_{W^T}.$$
$\LD J\RD$ acts trivially on $X_{W^T}$. We denote by
$$\widetilde{G}=G/\LD J\RD.$$
The quotient space
$$\X_{W^T}:=X_{W^T}/\widetilde{G_{W^T}}$$
is an elliptic orbifold with $\P^1$ as its underlying space. The A-model is the orbifold Gromov-Witten theory of $\X:=\X_{W^T}$.

\subsection{Analyticity and generic semisimplicity}
Let $H$ be the Chen-Ruan cohomology of $\X$ with unit ${\bf 1}$ and Poincar\'e pairing $\eta$. Let the divisor $\D$ be a nef generator in $H^2(\X,\Z)\subset H^2_{\rm CR}(\X,\Z)$ and
let $\t=(t,t_0,t_1\dots,t_{\mu-2})$ be a linear coordinate system on $H$, such that $t$ is the coordinate along $\D$. Recall the Gromov-Witten CohFT ${}_{\t}\Lambda^{\X}$, which a priori is only formal. Due to the so called  {\rm divisor axiom} we can identify
$q=e^t$, i.e.,
$$
{}_{\t}\Lambda^{\X}_{g,n}(\gamma_1, \cdots,\gamma_n)\in
H^*(\overline{\M}_{g,n}, \C)\otimes\C[[e^t,t_0,\cdots,t_{\mu-2}]].
$$
For every $\alpha,\beta,\gamma\in H$, the {\em big quantum product} $\alpha\star_{\t} \beta$ is defined by  the relation
\beq\label{Frob-algebra}
\langle \alpha\star_{\t} \beta, \gamma\rangle={}_{\t}\Lambda^{\X}_{0,3}(\alpha,\beta,\gamma).
\eeq
The product is only formal in $\t$.
We would like to prove that $\star_{\t}$  is convergent in the
open polydisk $D_{\epsilon}\subset\C^{\mu}$ with center the origin and radius $\epsilon$, i.e, $(q=e^t,\t_{\geq0})\in D_{\epsilon}$.
More precisely, our goal is to prove the following theorem.
\begin{theorem}\label{formal-ss}
The following statements hold:
\begin{itemize}
\item[(1)] There exists an $\epsilon >0$ such that ${}_{\t}\Lambda^{\X}_{0,3}(\alpha,\beta,\gamma)$ is convergent for all $(q=e^t,\t_{\geq0})\in D_\epsilon$ and $\alpha,\beta,\gamma\in H$.
\item[(2)] The quantum product $\star_{\t}$ is generically semisimple.
\end{itemize}
\end{theorem}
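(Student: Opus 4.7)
The plan is to deduce both statements from the corresponding properties of Saito's Frobenius structure on the B-model side, via the genus-zero mirror theorem of \cite{KS, MR}. On $\M = \HH \times \C^{\mu-1}$, Saito's Frobenius structure is analytic in the flat coordinates by construction: these coordinates are built from periods of the primitive form, which satisfy the hypergeometric equation \eqref{eq:Picard-Fuchs}, and the structure constants of $\bullet_\t$ are holomorphic on all of $\M$ (with finite-order poles along $\mathcal{K}$ removed in genus zero by Hertling--Sabbah). Moreover, this Frobenius structure is generically semisimple because the caustic $\mathcal{K}$ is a proper analytic subvariety of $\M$.

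To execute, I would first analyze the B-model side near the large complex structure limit $\tau=\sqrt{-1}\,\infty$. A direct study of \eqref{eq:Picard-Fuchs} (or equivalently the Gauss equation \eqref{eq:Gauss}) shows that there is a logarithmic period and an analytic period at the cusp, and that the flat coordinate $t$ has the form $t = \frac{k}{2\pi i}\log q + f(q)$ for some analytic germ $f$ at $q=0$, with $k=3,4,6$ in the three cases; the remaining flat coordinates $\t_{\geq0}$ are analytic transverse directions at the cusp. Thus $q=e^{2\pi i t/k}$ together with $\t_{\geq0}$ provides an analytic chart on a punctured polydisk about the cusp, and Saito's structure constants $C_{ij}^k(q,\t_{\geq 0})$ are analytic on some polydisk $D_\epsilon$. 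Next, I apply the genus-zero mirror theorem of \cite{KS,MR}: under this mirror identification the formal A-model three-point function ${}_\t\Lambda^\X_{0,3}(\alpha,\beta,\gamma)$ agrees with its B-model counterpart. Hence the formal series defining $\star_\t$ is the Taylor expansion at the origin of analytic functions on $D_\epsilon$, which gives~(1). Because the mirror map is a local biholomorphism near the cusp and Saito's structure is semisimple off $\mathcal{K}$, the A-model product $\star_\t$ is generically semisimple on $D_\epsilon$, giving~(2).

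The principal obstacle is the precise identification of the two flat coordinate systems: one must verify that the A-model flat coordinates dictated by the divisor axiom ($q=e^t$ together with the $t_i$) correspond, under the mirror map, to the B-model flat coordinates read off from the periods. This is carried out in \cite{MR} by computing the leading asymptotics of the periods at the cusp and matching with the initial data of the A-model Frobenius structure, namely the classical Chen--Ruan cup product and the three-point orbifold invariants of $\X$. Once the initial data agree, the reconstruction theorem for semisimple genus-zero Frobenius structures (WDVV plus homogeneity) forces the formal power series equality. A more direct alternative would be to prove convergence first by an a priori estimate on the growth of the genus-zero GW invariants of $\X$ (e.g.~by WDVV recursion seeded by a bound on three-point invariants), and only then use the mirror theorem to transfer semisimplicity; but the route above keeps the analytic and algebraic content on the B-model side where both are manifest.
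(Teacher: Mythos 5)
Your treatment of part (2) coincides with the paper's: the paper also declares semisimplicity ``not so easy to prove directly in the settings of Gromov--Witten theory'' and transfers it from the Saito side via the genus-zero mirror theorem of \cite{KS,MR}, exactly as you do. For part (1), however, you have inverted the paper's logic. The paper proves convergence \emph{directly on the A-side}: it invokes the growth estimate of Lemma 4.16 in \cite{KS}, namely $I^{GW}_{0,n,d}\leq d^{n-5}C^{n+d-4}$, uses the divisor axiom to organize the series in $q=e^t$, and uses the dimension axiom to see that ${}_{\t}\Lambda^{\X}_{0,3}(\alpha,\beta,\gamma)$ is a \emph{polynomial} in $t_0,\dots,t_{\mu-2}$ with coefficients in $\C[[q]]$, so that convergence reduces to the elementary one-variable series $\sum_d q^d d^{n-5}C^{n+d-4}$. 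This is precisely the route you mention in your closing sentence and set aside as ``a more direct alternative''; it is in fact the paper's proof.

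Your preferred route for (1) has a genuine gap at its crux: the claim that Saito's structure constants are analytic on a polydisk $D_\epsilon$ \emph{including the locus} $q=0$. Saito's theory gives holomorphy of the genus-zero structure on $\M=\HH\times\C^{\mu-1}$, but the cusp $\tau=\sqrt{-1}\,\infty$ is a boundary point of $\M$, and nothing in the general theory guarantees that the structure constants, rewritten in the chart $(q,\t_{\geq0})$, extend holomorphically over $q=0$ rather than acquiring negative powers of $q$. Your justification --- the asymptotic analysis of \eqref{eq:Picard-Fuchs} at the cusp --- only yields the logarithmic shape of the mirror map $t\sim\log q$, not cusp-holomorphy of all three-point functions. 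Note that in the paper's own architecture this cusp-holomorphy is exactly what the A-model supplies, not the B-model: the Remark following Theorem \ref{qm:transf} states explicitly that holomorphy at $\tau=\sqrt{-1}\,\infty$ ``will be achieved by the mirror theorem in section 5,'' i.e., by the convergence of the Gromov--Witten $q$-expansion. Deducing A-model convergence from B-model cusp-analyticity is therefore circular unless you supply an independent B-side proof (e.g., by verifying cusp-holomorphy for the finite set of correlators seeding the WDVV reconstruction and checking that reconstruction propagates it polynomially --- real work your sketch does not carry out). Two smaller inaccuracies: your parenthetical about ``finite-order poles along $\mathcal{K}$ removed in genus zero'' is misplaced, since the genus-zero Saito structure is holomorphic across the caustic --- finite-order poles along $\mathcal{K}$ appear only for the higher-genus quantization, cf.\ Lemma \ref{finite-order-poles} --- and your part (2) also tacitly needs convergence (or a formal semisimplicity statement over $\overline{{\rm Frac}\,\C[[e^t,\t_{\geq0}]]}$) before semisimplicity can be transported to the quantum product, which is why the paper establishes (1) first.
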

Part (1) follows from Theorem 1.2 in \cite{KS}. For the reader's convenience, we sketch the proof here as well.
First let us denote by
\begin{equation}
I_{0,n,d}^{GW}:=\max_{-1\leq i_j\leq\mu-2}\big|\LD\d_{i_1},\cdots,\d_{i_n}\RD_{0,n,d}^{\X}\big|.
\end{equation}
By direct computation, $I_{0,3,0}^{GW}\leq1$.
\begin{lemma}[Lemma 4.16 in \cite{KS}]
For $n+d\geq4,$ we have:
\begin{equation*}
I_{0,n,d}^{GW}\leq
\left\{
\begin{array}{ll}
d^{n-5}C^{n+d-4}, &d\geq1. \\
C^{n-4},&d=0.
\end{array}\right.
\end{equation*}
Here $C$ is some positive constant depending only on $n$.
\end{lemma}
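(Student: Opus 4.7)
The plan is to prove the bound by strong induction on the complexity parameter $n+d$, with the genus-zero WDVV equations as the main recursive tool. As a preliminary reduction I would use the divisor axiom to eliminate any insertion equal to the nef class $\D\in H^2(\X)$: each such insertion contributes a factor of $d$ and lowers $n$ by one, so after absorbing a factor of $d^n$ into the constant one may assume that every insertion belongs to the basis $\mathscr{S}\setminus\{\D\}$. With this reduction in place, the base cases are $n+d\leq 4$ (handled by direct computation of the finitely many relevant invariants), together with the special boundary cases $n=3$, $d\geq 1$ and $d=0$, $n\geq 3$ discussed below.

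For the inductive step with $n\geq 4$, distinguish four marked points labelled $a,b,c,e$ and apply the genus-zero WDVV relation associated to $\overline{\M}_{0,4}$. This expresses $\langle\d_{i_1},\dots,\d_{i_n}\rangle_{0,n,d}^{\X}$ as a difference of two sums, each of the form
$$
\sum_{I\sqcup J}\,\sum_{d'+d''=d}\,\sum_{\alpha,\beta\in\mathscr{S}}\,\langle\d_{i_a},\d_{i_b},\{\d_{i_k}\}_{k\in I},\d_\alpha\rangle_{0,|I|+3,d'}^{\X}\,\eta^{\alpha\beta}\,\langle\d_\beta,\{\d_{i_k}\}_{k\in J},\d_{i_c},\d_{i_e}\rangle_{0,|J|+3,d''}^{\X},
$$
where $I\sqcup J=\{1,\dots,n\}\setminus\{a,b,c,e\}$. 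Solving for the distinguished term in which the original $n$-point invariant appears on one side isolates $I_{0,n,d}^{GW}$ as a sum of at most $2^{n-3}(d+1)(\dim H)^2$ products of invariants of strictly smaller complexity. Plugging the inductive hypothesis into this recursion produces sums of the form $\sum_{d'+d''=d}(d')^{n_1-5}(d'')^{n_2-5}$ with $n_1+n_2=n+2$, which I would control by the elementary convolution estimate $\sum_{d'+d''=d}(d')^a(d'')^b\leq (a+b+2)\,d^{a+b+1}$. The resulting bound is of the required shape $d^{n-5}C^{n+d-4}$, provided $C$ is chosen large enough (as a function of $n$ and $\dim H$) to absorb the prefactors.

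The base case $d=0$ is handled trivially: classical triple intersections on $\X$ form a finite set of numbers, and by the dimension axiom all $d=0$ invariants are determined by these together with the Poincar\'e pairing, so the bound $C^{n-4}$ holds for $C$ large. The more delicate base case $n=3$ requires an estimate of the form $I_{0,3,d}^{GW}\leq d^{-2}C^{d-1}$ on genuinely three-point degree-$d$ invariants. For this one uses the explicit description of the quantum product on $\X$ established in \cite{KS,MR}: the structure constants of the quantum cup product are Fourier expansions of classical modular forms on $\HH$ for an appropriate finite-index subgroup of $\operatorname{SL}_2(\Z)$ via the change of variables $q=e^{2\pi i\tau/\ell}$ with $\ell\in\{3,4,6\}$, and the Fourier coefficients of such modular forms are known to have at most polynomial growth, which is far stronger than what the bound requires.

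The main obstacle I expect is the bookkeeping in the inductive step, namely verifying that the specific polynomial factor $d^{n-5}$ and exponential factor $C^{n+d-4}$ are preserved exactly by the WDVV recursion. The tight point is that WDVV splits an $n$-point invariant into two factors of total point-count $n+2$ while splitting the degree, so the polynomial exponent on the two sides sums to $n+2-10=n-8$, which is two less than the target exponent $n-5$; these two units are recovered from the convolution estimate (which contributes one unit of $d$) and from the sum over the $d+1$ degree splittings (which contributes another). One must treat the low-complexity cases $n_1\in\{3,4\}$ separately, since then the polynomial factor $(d')^{n_1-5}$ is not literally defined and must be replaced by the stronger trivial bound. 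Carrying out this numerology carefully is routine but tedious, and is precisely where the proof in \cite{KS} does the work.
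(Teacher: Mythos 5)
The paper itself gives no proof of this lemma---it is imported verbatim from \cite{KS}, Lemma 4.16---so your proposal has to stand on its own; its architecture (divisor axiom to strip insertions of $\D$, WDVV induction on $n+d$, convolution estimates over degree splittings) is indeed of the same family as the inductive scheme in \cite{KS}. But the inductive step as you state it is broken. The WDVV identity associated to $\overline{\M}_{0,4}$ with the $n$ insertions $\d_{i_1},\dots,\d_{i_n}$ involves only products $\langle\cdots\rangle_{0,|I|+3,d'}\,\eta^{\alpha\beta}\,\langle\cdots\rangle_{0,|J|+3,d''}$ with $|I|+|J|=n-4$, so \emph{every} factor has at most $n-1$ marked points; the $n$-point degree-$d$ invariant you want to bound never appears in that relation, and there is no ``distinguished term'' to solve for. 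The correct mechanism is the Kontsevich--Manin reconstruction trick, which is what the argument in \cite{KS} actually rests on: one first needs that every basis class of $H^*_{\rm CR}(\X)$ other than $\one$ and $\D$ is a cup product $\alpha\cup\beta$ of classes of strictly smaller degree (true for these orbifolds, since the minimal-age twisted classes generate the Chen--Ruan ring, but it must be invoked), and then applies WDVV to the $n+1$ insertions $\d_{i_1},\dots,\d_{i_{n-1}},\alpha,\beta$; the extremal boundary term $\sum_{\mu,\nu}\langle\alpha,\beta,\d_\mu\rangle_{0,3,0}\,\eta^{\mu\nu}\,\langle\d_\nu,\d_{i_1},\dots,\d_{i_{n-1}}\rangle_{0,n,d}$ reproduces the desired invariant. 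This also creates terms with the \emph{same} $(n,d)$ in which two insertions have merged into their cup product, so the induction cannot run on $n+d$ alone: one needs a lexicographic ordering that additionally decreases, e.g., the total cohomological degree of the non-divisor insertions.

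A second, logically serious issue is your base case $n=3$, $d\geq 1$. You ground it in the modularity of the quantum-product structure constants from \cite{KS,MR}, but within the logical chain of the present paper that modularity lies \emph{downstream} of this very lemma: the lemma yields convergence (Theorem \ref{formal-ss}, part (1)), convergence enables the genus-zero mirror theorem, and only then does one identify the structure constants with modular forms. Unless you supply an independent derivation of the three-point functions (as \cite{MR} does for $\widetilde{E}_6$ via closed theta-function formulas), the argument is circular; \cite{KS} avoids this by grounding the induction in a finite list of explicitly computed initial correlators and by bounding the degree-$d$ three- and four-point invariants through the same WDVV recursion, using an extra divisor insertion and the divisor axiom to recurse in $d$. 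Your numerology is otherwise sound---the convolution estimate, the separate treatment of $n_1\in\{3,4\}$, and in fact the divisor reduction needs no absorption of $d^n$ at all, since $d\cdot d^{(n-1)-5}=d^{n-5}$ makes the bound exactly stable under removing a $\D$ insertion---but the two structural points above are genuine gaps.
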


Since $H^*(\overline{\M}_{0,3}, \C)\cong\C$, it is enough to prove the convergence of the corresponding ancestor Gromov-Witten invariants. The divisor axiom implies
$$
\int_{\overline{\M}_{0,3}}{}_{\t}\Lambda^{\X}_{0,3}(\alpha,\beta,\gamma)=
\sum_{d\geq0}q^d\,\sum_{k=0}^{\infty}
\sum_{k_0+\cdots+k_{\mu-2}=k}
\frac{\LD\alpha,\beta,\gamma,\cdots\RD^{\X}_{0,3+k,d}}{k_0!\cdots k_{\mu-2}!}
\prod_{0\leq i\leq\mu-2}t_{i}^{k_i}
$$
where the dots stand for the insertion $\d_0,\cdots,\d_{\mu-2}$ with multiplicities respectively $k_0,\dots,k_{\mu-2}$. For dimensional reasons the Gromov-Witten invariants in the above formula vanish except for finitely many $k$. In another words,
$$
\int_{\overline{\M}_{0,3}}
{}_{\t}\Lambda^{\X}_{0,3}(\alpha,\beta,\gamma)\in
\C[t_0,\cdots,t_{\mu-2}]\otimes\C[[q]].
$$
Thus the convergence of ${}_{\t}\Lambda^{\X}_{0,3}(\alpha,\beta,\gamma)$ in $(q,\t_{\geq0})\in D_{\epsilon}$ follows from the convergence of the following series near $q=0$,
$$\sum_{d\geq0}q^d\,d^{n-5}C^{n+d-4}.$$

Part (2) is not so easy to prove directly in the settings of Gromov-Witten theory.
We use the genus-0 part of the mirror symmetry theorem of \cite{KS}.
We recall the {\em genus-$0$ ancestor Gromov-Witten potential} constructed from ${}_{\t}\La^{\X}$
\beqa
\mathcal{F}_{0}^{GW}(\X)(\t):=\sum_{n}\sum_{\iota_j,i_j,d}\frac{1}{n!}\LD\tau_{\iota_{1}}(\d_{i_1}),\cdots,\tau_{\iota_{n}}(\d_{i_n})\RD^{\X}_{0,n,d}(\t)\prod_{j=1}^{n}\widetilde{q}_{i_j}^{\iota_j}.
\eeqa
We can expand it as a formal series (due to the divisor axiom) as follows:
$$
\mathcal{F}_{0}^{GW}(\X)(\t)\in \C[[q=e^t,t_0,\cdots,t_{\mu-2}]]\otimes \C[[\q_0,\widetilde{\q}_1,\q_2,\cdots]].
$$
The space $\M=\HH\times \C^{\mu-1}$ can be equipped with flat coordinates $\t^B$ corresponding to a  choice of a primitve form (for $W$). In fact, $\M$ has a generically semi-simple Frobenius structure, which allows us to define the Saito--Givental ancestor potentials $\mathcal{F}_{g}^{SG}(W)(\t^B)$ for all genera $g$ (see \ref{formal-potential} ).

The genus-0 mirror symmetry can be stated as follows:
the primitive form can be chosen in such a way that there exists an analytic embedding $D_\epsilon \hookrightarrow \M$, called a {\em mirror map}, s.t.,
\begin{itemize}
\item[(1)]
The linear coordinates $\t$ on $D_\epsilon$ correspond to flat coordinates $\t^B$ on $\M.$
\item[(2)]
We have
\beqa\label{genus0-mirror}
\mathcal{F}_{0}^{GW}(\X_{W^T})(\t)=\mathcal{F}_{0}^{SG}(W)(\t^B).
\eeqa
\end{itemize}
We denote the image of $D_{\epsilon}$ by $D_{\epsilon}^B$. Let us recall (see \cite{MR}) that under the mirror map
the modulus $\tau$ (cf. Sect. \ref{sec:3.2}) is a flat coordinate on $\M$ and we have
\beq\label{mirror-map}
t=\frac{2\pi \sqrt{-1} }{N}\, \tau,
\eeq
where $N=3,4$, and $6$ respectively for $W=\widetilde{E}_6, \widetilde{E}_7,$ and $\widetilde{E}_8.$
It follows that the large volume limit point $e^t=0$, i.e., $t=-\infty$ corresponds to the large
complex structure limit point $\tau=\sqrt{-1}\,\infty.$

The proof can be splitted into two parts: choice of a primitive form, s.t., (1) holds and prove that the ancestor potentials on both sides are uniquely determined from a finite set of correlators, which agree under the mirror map. The first step was done in \cite{MR} and the second one in \cite{KS}.

\subsection{Convergence of $\Lambda^{\X}_{g,n}(\t)$}
We identify via the mirror map the flat coordinates $\t^B$ on $\M$ and the linear coordinates $\t$ on $D_\epsilon$. Recall the
CohFT $\La_{g,n}^{W}(\t^B)$ defined by formula \eqref{eq:trans=quant} for all semisimple points $\t^B$.
\begin{theorem}\label{convergence}
The CohFT $\La_{g,n}^{W}(\t^B)$ extends holomorphically for all $\t^B\in D^B_\epsilon$,
the ancestor Gromov--Witten CohFT ${}_{\t}\La^{\X}$ is convergent for all $\t\in D_\epsilon$, and
we have
\ben
{}_{\t}\La_{g,n}^{\X} = \La_{g,n}^{W}(\t^B),\quad \forall t\in D_\epsilon  .
\een
\end{theorem}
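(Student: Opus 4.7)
The plan is to combine three ingredients: the convergence and generic semisimplicity of the genus-zero GW Frobenius structure of $\X$ provided by Theorem \ref{formal-ss}, together with the genus-zero mirror symmetry of \cite{KS,MR} matching it with Saito's Frobenius structure on $D_\epsilon^B\subset\M$; Teleman's reconstruction theorem (Theorem \ref{hg-rec}), which forces any homogeneous formal CohFT with flat identity and generically semisimple underlying Frobenius algebra to agree with the Givental ancestor formula built from its genus-zero data; and a convergence result of Coates--Iritani that upgrades analyticity of the big quantum product to analyticity of the full ancestor CohFT.

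First I would identify $D_\epsilon$ with $D_\epsilon^B$ via the mirror map \eqref{mirror-map} and combine Theorem \ref{formal-ss} with genus-zero mirror symmetry, so that on $D_\epsilon\setminus\mathcal{K}$ the canonical coordinates $u_i$, the transition matrix $\Psi$, the asymptotic operator $R$, and the diagonal matrix $U$ of the A-model coincide with the corresponding B-model quantities and depend holomorphically on $\t$. Applying Theorem \ref{hg-rec} to the formal, homogeneous, flat-identity CohFT ${}_{\t}\La^{\X}$ produces
\begin{equation*}
{}_{\t}\La^{\X}\;=\;\widehat{\Psi}(\t)\circ\bigl(T_{z}\circ\widehat{R}(\t)\circ T_{z}^{-1}\bigr)\circ I^{N,\Delta(\t)},
\end{equation*}
as an identity of formal CohFTs around any semisimple base point $\t_0\in D_\epsilon\setminus\mathcal{K}$. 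By definition \eqref{eq:trans=quant} the right-hand side is precisely $\La^{W}(\t^B)$, so we obtain the required identity ${}_{\t}\La^{\X}=\La^{W}(\t^B)$ as formal power series around $\t_0$, and hence as analytic cohomology classes on a neighborhood of $\t_0$ whenever both sides are known to be analytic there.

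The main remaining issue, which I expect to be the technical core of the proof, is to promote ${}_{\t}\La^{\X}$ from a formal object to an honest analytic CohFT on all of $D_\epsilon$, in particular through the caustic and at the large volume limit $\t=0$ where the classical Chen--Ruan product is not semisimple. For this I would invoke the Coates--Iritani convergence theorem: given that the big quantum product of $\X$ is analytic on $D_\epsilon$ by Theorem \ref{formal-ss}(1), the Givental/Teleman ancestor formula assembles into an analytic CohFT on the same polydisk, including at non-semisimple points. With analyticity of ${}_{\t}\La^{\X}$ in hand, the holomorphic extension of $\La^{W}$ through $\mathcal{K}$ follows from Lemma \ref{finite-order-poles}: each coefficient of $\La_{g,n}^{W}(\t^B)$ in a basis of $H^{*}(\overline{\M}_{g,n};\C)$ is meromorphic on $D_\epsilon^B$ with at most finite-order poles along the analytic hypersurface $\mathcal{K}$, and because it agrees off $\mathcal{K}$ with the holomorphic function ${}_{\t}\La_{g,n}^{\X}$, Riemann's removable singularity theorem forces the poles to cancel. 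The identity ${}_{\t}\La_{g,n}^{\X}=\La_{g,n}^{W}(\t^B)$ then holds throughout $D_\epsilon$.
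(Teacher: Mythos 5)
Your overall architecture --- genus-zero convergence and semisimplicity, genus-zero mirror symmetry, Teleman's reconstruction, a Coates--Iritani input, and a removable-singularity argument along $\mathcal{K}$ --- is the same as the paper's, but there is a genuine circularity in your order of operations. You propose to apply Theorem \ref{hg-rec} to ${}_{\t}\La^{\X}$ ``as an identity of formal CohFTs around any semisimple base point $\t_0\in D_\epsilon\setminus\mathcal{K}$.'' But ${}_{\t}\La^{\X}$ is a priori only a formal power series based at the origin $(e^t,\t_{\geq0})=(0,0)$, which is \emph{not} semisimple; it cannot be re-expanded around a semisimple point $\t_0\neq 0$ without already knowing the convergence that the theorem is supposed to establish. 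You patch this by invoking the full Coates--Iritani convergence theorem (analyticity of the ancestor CohFT on $D_\epsilon$, ``including at non-semisimple points''), but that result itself has to identify the formal GW series with the Givental formula at the non-semisimple origin, i.e., it already contains a formal Teleman step; so either your Teleman step is redundant or, as written, it precedes the convergence it depends on.

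The paper resolves exactly this point with a lightweight algebraic device that your proposal misses. One regards ${}_{\t}\La^{\X}$ as a CohFT over the field $\overline{{\rm Frac}\,\C[[e^t,t_{0},\cdots,t_{\mu-2}]]}$ (algebraic closure of the field of fractions of the formal series ring), over which generic semisimplicity is a purely algebraic condition and Theorem \ref{hg-rec} applies verbatim, yielding ${}_{\t}\La^{\X}_{g,n}=\La^{W}_{g,n}(\t^B)$ as an identity in $H^*(\overline{\M}_{g,n},\C)\otimes\overline{{\rm Frac}\,\C[[e^t,t_{0},\cdots,t_{\mu-2}]]}$ --- no base point and no analyticity are needed. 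Convergence is then extracted not from the analytic Coates--Iritani machinery but from their purely algebraic Lemma 6.6: by Lemma \ref{finite-order-poles} the B-model side is \emph{meromorphic} with finite-order poles along $\mathcal{K}$, hence lies in $\overline{{\rm Frac}\,\C\{e^t,t_0,\cdots,t_{\mu-2}\}}$, while the A-model side lies in $\C[[e^t,t_0,\cdots,t_{\mu-2}]]$ by definition; since $\overline{{\rm Frac}\,\C\{x_1,\cdots,x_n\}}\cap\C[[x_1,\cdots,x_n]]=\C\{x_1,\cdots,x_n\}$, the common object is a convergent series, which simultaneously gives convergence of ${}_{\t}\La^{\X}$, the holomorphic extension of $\La^{W}$ across $\mathcal{K}\cap D^B_\epsilon$, and the identity --- with no appeal to Riemann's removable singularity theorem. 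Note also that here Lemma \ref{finite-order-poles} is not a convenience, as in your last step, but load-bearing: the finite order of the poles is what places the B-model side in the fraction field of \emph{convergent} series. If the full Coates--Iritani convergence theorem is available for these orbifold targets your route could likely be repaired, but the paper's argument deliberately avoids needing it.
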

\proof
The Frobenius structure of the quantum cohomology is generically semi-simple (cf. Theorem \ref{formal-ss}, (2)).
In particular, if we think of the CohFT ${}_\t \Lambda^\X$ as a CohFT over the field
\ben
\overline{{\rm Frac}\, \C[[e^t,t_{0},\cdots,t_{\mu-2}]] },
\een
where overline means {\em algebraic closure} and Frac stands for the {\em field of fractions}; then  ${}_\t \Lambda^\X$
is a semi-simple CohFT with a flat identity. Teleman's reconstruction Theorem \ref{hg-rec} applies and we get that
\beq\label{eq:mirror-g}
{}_{\t}\La_{g,n}^{\X}=\La_{g,n}^{W}(\t^B),
\eeq
where the equality should be interpreted as equality in the space
\ben
H^*(\overline{\M}_{g,n},\C)\otimes\overline{{\rm Frac}\, \C[[e^t,t_{0},\cdots,t_{\mu-2}]] }.
\een
On the other hand, according to Lemma \eqref{finite-order-poles},
$\La_{g,n}^{W}(\t^B)$ is meromorphic for $\t\in D^B_{\epsilon}$, thus
\beq
\La_{g,n}^{W}(\t^B)={}_{\t}\La^{\X}_{g,n}\in H^*(\overline{\M}_{g,n},\C)\otimes
\overline{{\rm Frac}\, \C\{e^t,t_{0},\cdots,t_{\mu-2}\}},
\eeq
where $\C\{x_1,\dots,x_n\}$ is the ring of convergent power series at $x_1=\cdots=x_n=0$
(the overline means algebraic closure).
On the other hand, by definition
\beq
{}_{\t}\La_{g,n}^{\X}\in H^*(\overline{\M}_{g,n},\C)\otimes\C[[e^t,t_{0},\cdots,t_{\mu-2}]].
\eeq
Now we apply the following lemma of Coates--Iritani,
\begin{lemma}
[\cite{CI}, Lemma 6.6] The intersection
\ben
\overline{{\rm Frac}\,\C\{x_1,\cdots,x_n\}}\cap\C[[x_1,\cdots,x_n]]\subset\overline{{\rm Frac}\,\C[[x_1,\cdots,x_n]]}
\een
coincides with
$\C\{x_1,\cdots,x_n\}.$
\end{lemma}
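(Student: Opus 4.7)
The inclusion $\C\{x_1,\ldots,x_n\}\subseteq\overline{{\rm Frac}\,\C\{x_1,\ldots,x_n\}}\cap\C[[x_1,\ldots,x_n]]$ is tautological; the content is the reverse direction. Writing $R=\C\{x_1,\ldots,x_n\}$, $\hat R=\C[[x_1,\ldots,x_n]]$, and $K={\rm Frac}\,R$, suppose $f\in\hat R$ is algebraic over $K$. Clearing denominators and passing to an irreducible factor, we may assume $f$ satisfies a monic irreducible polynomial $P(T)=T^d+a_{d-1}T^{d-1}+\cdots+a_0\in R[T]$. The plan is then to show that $R$ is algebraically closed in $\hat R$, which is exactly the lemma.

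\medskip

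My first step would be to reduce to the one-variable case, where Puiseux's theorem does the work. After a generic $\C$-linear change of coordinates, the discriminant $\Delta\in R$ of $P$ with respect to $T$ becomes Weierstrass-regular in $x_n$, so by the Weierstrass preparation theorem there is a polydisk $U\ni 0$ and a proper analytic hypersurface $Z=\{\Delta=0\}\subset U$ outside which, by the implicit function theorem, the $d$ roots of $P$ are locally holomorphic in $x$. The formal series $f$ then picks out the germ at $0$ of one such branch.

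\medskip

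The main step is to rule out nontrivial monodromy of this branch around $Z$. Fix a generic complex line through the origin, say $x_i=c_i t$ for $i<n$ and $x_n=t$, and restrict $P$ and $f$ to it: $P$ becomes a polynomial in $T$ with coefficients in $\C\{t\}$, while $f$ restricts to a formal power series in $t$ satisfying it. The classical one-variable theorem (roots of such polynomials are convergent Puiseux series, and the hypothesis $f\in\hat R$ forbids fractional exponents) then gives that the restriction lies in $\C\{t\}$. Varying the slice and applying a Hartogs-type argument to combine the slicewise estimates, one concludes that $f$ is holomorphic on a neighborhood of $0$, hence $f\in R$.

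\medskip

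The hardest part will be the last step: passing from convergence along every generic slice to joint convergence on a full neighborhood of the origin requires uniform control on the Puiseux expansions as the slice varies. A cleaner shortcut is to invoke the general commutative-algebra principle that any Henselian local ring is algebraically closed in its $\mathfrak m$-adic completion, so that it suffices to recall that $R=\C\{x_1,\ldots,x_n\}$ is Henselian (which is itself proved via Weierstrass preparation). Either route relies on the same two essential inputs: Weierstrass preparation and Puiseux's theorem.
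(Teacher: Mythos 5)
You should first note that the paper itself offers no proof of this lemma: it is quoted verbatim from Coates--Iritani \cite{CI}, Lemma 6.6, so there is no internal argument to compare against and your proposal stands or falls on its own. On its own it has a genuine gap, in fact one in each of your two routes. In the slicing route, the step you flag as hardest is indeed where the argument fails, and it is not repairable by ``a Hartogs-type argument'': Hartogs' theorem applies to a function already defined and separately holomorphic on an open set, whereas you only have a formal series whose restriction to each line through the origin converges, with no uniform control of radii or bounds as the line varies; convergence along every slice does not by itself produce a function on any neighborhood of $0$. (Relatedly, the sentence ``the formal series $f$ then picks out the germ at $0$ of one such branch'' presupposes exactly the convergence to be proven -- a formal root is not a priori the germ of any of the holomorphic branches off $Z$. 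A smaller issue: ``clearing denominators\ldots we may assume $P$ monic'' requires passing to $g=a_df$ and then descending via $a_d\C[[x]]\cap\C\{x\}=a_d\C\{x\}$, i.e.\ faithful flatness of completion.)

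The fallback ``general commutative-algebra principle'' is false as stated: a Henselian Noetherian local ring need not be algebraically closed in its $\mathfrak{m}$-adic completion. For a concrete counterexample, pick $u\in\mathbb{F}_p[[t]]$ transcendental over $\mathbb{F}_p(t)$, let $V_0=\mathbb{F}_p(t,u^p)\cap\mathbb{F}_p[[t]]$ and let $V$ be its Henselization inside $\widehat{V_0}=\mathbb{F}_p[[t]]$; then $u$ is a root of $T^p-u^p\in V[T]$, lies in the completion, but not in $V$ (it is purely inseparable over ${\rm Frac}\,V_0$, while $V$ contains only separably algebraic elements). Even in equicharacteristic zero, Henselianness only lifts factorizations that are coprime modulo $\mathfrak{m}$, equivalently gives Newton--Hensel under the \emph{divisibility} hypothesis $P(a)\equiv 0 \bmod P'(a)^2\mathfrak{m}$; an approximation $a\equiv f \bmod \mathfrak{m}^N$ only gives $P(a)\in\mathfrak{m}^N$, an order-of-vanishing condition that does not imply divisibility by $P'(a)^2$ in a ring of dimension $>1$. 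What actually closes the argument is that $\C\{x_1,\dots,x_n\}$ is Henselian \emph{and excellent}, so Artin's approximation theorem applies: given the monic separable irreducible $P$ with formal root $f$, produce a convergent root $\tilde f$ with $\tilde f\equiv f \bmod \mathfrak{m}^N$; since $P$ has at most $\deg P$ roots in the domain $\C[[x_1,\dots,x_n]]$ and distinct roots differ by nonzero series, choosing $N$ larger than the orders of all pairwise differences forces $f=\tilde f\in\C\{x_1,\dots,x_n\}$. So the two essential inputs are Weierstrass preparation and Artin approximation (or an equivalent excellence argument), rather than Weierstrass preparation and Puiseux; with that substitution your reduction to the monic case and the overall structure of the proof are fine.
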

\noindent
This completes the proof.
\qed

\subsection{Extension property}
In this subsection, we use Lemma 3.2 from \cite{MR} to derive the extension property.
\begin{proposition}\label{extension}
The coefficients of $\La_{g,n}^W(\t^B)(\gamma_1,\dots,\gamma_n)$ extend
holomorphically through $\mathcal{K}$, i.e., they are holomorphic functions on $\M$.
\end{proposition}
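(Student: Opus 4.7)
The plan is to combine the holomorphicity established near the cusp in Theorem \ref{convergence} with the monodromy transformation law from Theorem \ref{thm:analytic-cont}, and then invoke Lemma 3.2 of \cite{MR} to rule out poles along $\mathcal{K}$.

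First, I would note that Theorem \ref{convergence} identifies $\La_{g,n}^{W}(\t^B)$ with the ancestor Gromov--Witten CohFT ${}_{\t}\La_{g,n}^{\X}$ on the polydisc $D_\epsilon^B\subset \M$ near $\tau=\sqrt{-1}\,\infty$. Since the Gromov--Witten side is defined by a convergent power series on $D_\epsilon^B$, the class $\La_{g,n}^{W}(\t^B)(\gamma_1,\dots,\gamma_n)$ is holomorphic there; in particular $D_\epsilon^B$ avoids $\mathcal{K}$ (after possibly shrinking $\epsilon$), so we already have holomorphicity on a non-empty open subset of $\M$.

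Next, I would propagate this holomorphicity using the monodromy action. By Theorem \ref{thm:analytic-cont}, for every element $\nu$ in the monodromy group $\Gamma$ the analytic continuation satisfies
\beqa
\Lambda^{W}(\nu(\t^B))=J_{\nu}^{-1}(t)\circ\widehat{X}_{\nu,t}(z)\circ\Lambda^{W}(\t^B).
\eeqa
Inspection of \eqref{Phi:matrix}--\eqref{X:matrix} shows that the entries of $J_{\nu}^{-1}(t)$ and $X_{\nu,t}(z)$ are rational in $t$ with poles only at the zeros of $j_{\nu}(t)=n_{21}t+n_{22}$, which lie outside $\HH$. Hence the monodromy operators preserve holomorphy inside $\HH\times \C^{\mu-1}$, and holomorphicity on $D_\epsilon^B$ spreads to every translate $\nu\cdot D_\epsilon^B$. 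Since the union of these translates is a dense open subset of $\M\setminus\mathcal{K}$, we obtain holomorphicity of $\La_{g,n}^{W}(\t^B)(\gamma_1,\dots,\gamma_n)$ on $\M\setminus\mathcal{K}$.

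It remains to rule out genuine singularities along the caustic. By Lemma \ref{finite-order-poles} the class is meromorphic on $\M$ with at most finite order poles along $\mathcal{K}$, so the only obstruction is pole order. This is exactly the situation handled by Lemma 3.2 of \cite{MR}: a meromorphic section on $\M$ with finite-order poles along the caustic, which is holomorphic in a neighbourhood of the cusp $\tau=\sqrt{-1}\,\infty$, is automatically holomorphic on all of $\M$. Applied coefficientwise (to the finitely many cohomology classes on $\overline{\M}_{g,n}$ appearing in the expansion of $\La_{g,n}^{W}(\t^B)(\gamma_1,\dots,\gamma_n)$), this yields the desired extension.

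The main obstacle is the last step: the caustic $\mathcal{K}$ is an analytic hypersurface and the propagation via $\Gamma$ alone only gives holomorphy on its complement; removing poles along $\mathcal{K}$ requires a genuine extension argument rather than mere analytic continuation, and this is precisely what Lemma 3.2 of \cite{MR} supplies by exploiting the modular nature of the poles together with the holomorphy at the cusp.
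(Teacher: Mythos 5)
There is a genuine gap, and it sits exactly where you locate the crux. The statement you attribute to Lemma 3.2 of \cite{MR} --- that a meromorphic class on $\M$ with finite-order poles along the caustic which is holomorphic near the cusp is automatically holomorphic on all of $\M$ --- is false as you formulate it: nothing in those hypotheses prevents poles along components of $\mathcal{K}$ that stay away from the cusp neighbourhood $D^B_\epsilon$, and neither holomorphy at the cusp nor any ``modular nature of the poles'' excludes them. Your monodromy propagation cannot fill this in: the translates $\nu\cdot D^B_\epsilon$ of a horoball neighbourhood of $\tau=\sqrt{-1}\,\infty$ only cover horoball neighbourhoods of the cusps, so their union is not dense in $\M\setminus\mathcal{K}$ (and even density would say nothing about points \emph{of} $\mathcal{K}$); moreover, invoking Theorem \ref{thm:analytic-cont} in its restricted form at $\t_{\geq 0}=0$ is circular, since the locus $\HH\times\{0\}$ consists of non-semisimple points and $\La^W_{g,n}(t)$ is only defined there as the limit whose existence away from $D^B_\epsilon$ is precisely what Proposition \ref{extension} asserts. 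The propagation step is also superfluous for what it does achieve: single-valued holomorphy on $\M_{ss}=\M\setminus\mathcal{K}$ is already contained in Lemma \ref{finite-order-poles}, since the class is a rational expression in the entries of finitely many $R_k$, which are holomorphic off the caustic.

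The missing idea is the \emph{homogeneity} of the CohFT, i.e., the Euler vector field, which your argument never uses. The paper's proof defines a $\C^*$-action on $\M=\HH\times\C^{\mu-1}$ by the weights of the coordinates $\t^B$; since $\La^W(\t^B)$ is homogeneous, the locus $\widetilde{\mathcal{K}}\subset\mathcal{K}$ where the class fails to extend is a $\C^*$-invariant analytic subset, and by the Hartogs extension theorem it has codimension exactly one if non-empty. Theorem \ref{convergence} gives that $D_\epsilon$ is disjoint from $\widetilde{\mathcal{K}}$, so $\HH\times\{0\}\not\subset\widetilde{\mathcal{K}}$; combined with the $\C^*$-invariance this forces the components of $\widetilde{\mathcal{K}}$ to be vertical slices $\{\tau_0\}\times\C^{\mu-1}$, which contradicts $\widetilde{\mathcal{K}}\subset\mathcal{K}$ because no such slice lies in the caustic. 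This scaling-invariance-plus-Hartogs mechanism is the actual content behind the citation of \cite{MR}, and without it your proposal has no step that rules out poles along $\mathcal{K}$ away from the cusp.
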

\proof
Let us define an action of $\C^*$ on $\M=\HH\times \C^{\mu-1}$ according to the weights of the
coordinates $\t^B$. Since $\La^W(\t^B)$ is a homogeneous CohFT, the domain $\widetilde{\mathcal{K}}$ of
all $\t^B$ where the theory does {\em not} extend analytically is $\C^*$-invariant. Since $\widetilde{\mathcal{K}}$
is the set of points $\t^B\in \M$, such that $\La^W(\t^B)$ has a pole, $\widetilde{\mathcal{K}}$ must be an
analytic subset. Let us assume that $\widetilde{\mathcal{K}}$ is non-empty. The Hartogues extension theorem implies that the codimension of $\widetilde{\mathcal{K}}$ is at most 1
and hence precisely one. On the other hand, according to Theorem \ref{convergence}, the polydisk $D_\epsilon$ is disjoint from
$\widetilde{\mathcal{K}}$. In particular, $\HH\times \{0\}$ is not contained in $\widetilde{\mathcal{K}}$ and hence the
two subvarieties interesect transversely. This combined with the $\C^*$ invariance of $\widetilde{\mathcal{K}}$ implies
that the connected components of $\widetilde{\mathcal{K}}$ have the form $\{\tau_0\}\times \C^{\mu-1}$. This is a
contradiction, because  $\widetilde{\mathcal{K}}\subset \mathcal{K}$, while
$\{\tau_0\}\times \C^{\mu-1}\not\subset \mathcal{K}.$
\qed

\subsection{Quasi-modularity}
Finally, let us complete the proof of our main theorem. According to Theorem \ref{convergence} the
Gromov--Witten CohFT $\La_{g,n}^{\X}(q)$ is convergent and it coincides with $\La_{g,n}^{W}(\tau)$, under the mirror map \eqref{mirror-map}. The
latter transforms as a quasi-modular form according to Theorem \ref{qm:transf}, it is analytic for all
$\tau\in \HH$ due to Proposition \ref{extension}, and finally it extends holomorphically over the cusp $\tau=i\, \infty$ because $\La_{g,n}^{\X}(q)$ extends holomorphically over $q=0$. This completes the proof of Theorem \ref{t1}.

\end{document}